\begin{document}

\begin{center}

  {\bf{\LARGE{Sparse Principal Component Analysis with Non-Oblivious Adversarial Perturbations}}}
  
\vspace*{.2in}

{\large{
\begin{tabular}{ccc}
Yuqing He$^{\ddagger}$, Guanyi Wang$^{\star}$, Yu Yang$^{\dagger}$
\end{tabular}
}}

\vspace*{.2in}

\begin{tabular}{c}
School of Mathematics and Applied Mathematics, Zhili College$^{\ddagger}$, Tsinghua University \\
Department of Industrial Systems Engineering and Management$^\star$, National University of Singapore \\
Department of Industrial and Systems Engineering$^\dagger$, University of Florida
\end{tabular}

\vspace*{.2in}
\today
\vspace*{.2in}

\begin{abstract}
Sparse Principal Component Analysis (sparse PCA) is a fundamental dimension-reduction tool that enhances interpretability in various high-dimensional settings. An important variant of sparse PCA studies the scenario when samples are adversarially perturbed. Notably, most existing statistical studies on this variant focus on recovering the ground truth and verifying the robustness of classical algorithms when the given samples are corrupted under oblivious adversarial perturbations. In contrast, this paper aims to find a robust sparse principal component that maximizes the variance of the given samples corrupted by non-oblivious adversarial perturbations, say sparse PCA with Non-Oblivious Adversarial Perturbations (sparse PCA-NOAP). Specifically, we introduce a general formulation for the proposed sparse PCA-NOAP. We then derive Mixed-Integer Programming (MIP) reformulations to upper bound it with provable worst-case guarantees when adversarial perturbations are controlled by two typical norms, i.e., $\ell_{2 \rightarrow \infty}$-norm (sample-wise $\ell_2$-norm perturbation) and $\ell_{1 \rightarrow 2}$-norm (feature-wise $\ell_2$-norm perturbation). Moreover, when samples are drawn from the spiked Wishart model, we show that the proposed MIP reformulations ensure vector recovery properties under a more general parameter region compared with existing results. Numerical simulations are also provided to validate the theoretical findings and demonstrate the accuracy of the proposed formulations. 

% \vspace{2 ex}

% \noindent\textbf{keywords:} Sparse PCA \quad Adversarial Perturbation \quad Mixed Integer Programming \quad Spiked Wishart Model.
\end{abstract}
\end{center}

\section{Introduction}
This paper studies a robust generalization of vanilla sparse Principal Component Analysis (PCA) by incorporating additional non-oblivious adversarial perturbations into a given sample set. Such a problem has been widely studied in many fields, including robust training processes with adversarial perturbations \citep{bai2021recent}, machine learning security \citep{diakonikolas2019sever}, exploratory analysis of corrupted data \citep{diakonikolas2017being}, and minimax game theoretic strategy \citep{zhou2019survey}, to name but a few. In particular, the proposed \emph{sparse PCA with Non-Oblivious Adversarial Perturbations (sparse PCA-NOAP)} is defined as follows: Given a set of (unpolluted/unperturbed) samples $\bm{x}_1, \ldots, \bm{x}_n$, for a pre-determined matrix norm $\|\cdot\|$ that specifies the type/metric of adversarial perturbations, and a positive parameter $\rho > 0$ that controls the maximum magnitude of  perturbations, the sparse PCA-NOAP aims to find a \emph{robust \& sparse principal component $\bm{v}^{\tt RS}$} that maximizes the variance under additional non-oblivious adversarial perturbations by solving the following max-min problem: 
\begin{align}
    \bm{v}^{\tt RS} ~ := ~ \argmax_{\bm{v} \in \mathcal{V}_k} ~~  \min_{\widetilde{\bm{\Sigma}} \in  \mathcal{U}_{\|\cdot\|} (\rho)} ~~ \bm{v}^{\top} \widetilde{\bm{\Sigma}} \bm{v} ~~,   \label{eq:SPCA-NOAP} 
\end{align}
where we use $\mathcal{V}_k := \left\{ \bm{v} ~|~ \|\bm{v}\|_2 = 1, \|\bm{v}\|_0 \leq k \right\}$ with a $\ell_0$-norm sparsity constraint for the number of non-zero components of $\bm{v}$ to denote the set of $k$-sparse unit vector, $\widetilde{\bm{\Sigma}}$ to denote an adversarially perturbed (corrupted) covariance matrix choosing from the following uncertainty set $\mathcal{U}_{\|\cdot\|} (\rho) := \{ \widetilde{\bm{\Sigma}} = \frac{1}{n} \widetilde{\bm{X}}^{\top} \widetilde{\bm{X}} ~|~ \widetilde{\bm{X}} = \bm{X} + \bm{E}, ~ \|\bm{E}\| \leq \rho \}$ with the (uncorrupted) sample matrix $\bm{X} := \left( \bm{x}_1 ~|~ \cdots ~|~ \bm{x}_n \right)^{\top} \in \mathbb{R}^{n \times d}$, and perturbation matrix $\bm{E} \in \mathbb{R}^{n \times d}$ controlled by the aforementioned matrix norm $\|\cdot\|$ and parameter $\rho > 0$.

Before presenting the main contributions, we first highlight the differences between the proposed sparse PCA-NOAP and relevant problems arising from data analytics and statistics. First, unlike classical sparse PCA, the proposed sparse PCA-NOAP aims to find a principal component $\bm{v}^{\tt RS}$ that maximizes the variance under adversarial perturbations as presented in Formulation~\eqref{eq:SPCA-NOAP}. Second, compared with the sparse PCA under oblivious outliers arising from statistics, instead of recovering the ground truth or verifying the level of robustness for classical sparse PCA algorithms, sparse PCA-NOAP aims to answer the following two high-level questions: 
\textbf{(Q1)} \emph{Computationally, does there exist a tractable mixed-integer programming (MIP) formulation that enables computing, approximating, and verifying $\bm{v}^{\tt RS}$ on some typical types of non-oblivious adversarial perturbations with provable guarantees?}
\textbf{(Q2)} \emph{Statistically, under different (typical) types of non-oblivious adversarial perturbations, how $\bm{v}^{\tt RS}$ changes with respect to the increasing intensity of adversarial perturbations?} The answer to question \textbf{(Q2)} further sheds light on questions: Which support is more ``significant'' than others? How can we approach sparse PCA with unknown sparsity level $k$? 

\textbf{Contributions and paper organization:}
In general, our main contributions provide an initial affirmative answer to the above two high-level questions, which include the following three parts: \textbf{1.} Section~\ref{sec:SPCA-NOAP-refor} proposes computationally tractable MIPs for two typical sparse PCA-NOAP, i.e., sample-wise/feature-wise adversarially perturbed sparse PCA, combined with provable additive/affine approximation (upper) bounds. For people with independent interests, techniques used in establishing MIP formulations can be generalized to formulate other sparse generalized eigenvalue problems or mixed-integer quadratic programs. \textbf{2.} When samples are i.i.d. generated from some underlying spiked Wishart model, Section~\ref{sec:stat-results} then provides statistical properties for the aforementioned two typical sparse PCA-NOAP, i.e., sample-wise and feature-wise adversarially perturbed sparse PCAs. Additionally, we characterize the behavior of robust \& sparse principal components as parameter $\rho$ increases. \textbf{3.} Numerical simulations are also reported in Section~\ref{sec:numerical} to validate the theoretical findings and demonstrate the relative accuracy of our proposed MIP formulations. Finally, concluding remarks and future directions are included in Section~\ref{sec:summary}. 

\textbf{Notation:} We use lowercase letters, e.g., $a$, for scalars and bold lowercase letters, e.g., $\bm{a}$, as vectors, where $\bm{a}_i$ is its $i$-th component with $i \in [d]$, and bold upper case letters, e.g., $\bm{A}$, as matrices. Without specific description, for a $m$-by-$n$ matrix $\bm{A}$, we denote $\bm{A}_{i,j}$ as its $(i,j)$-th component, $\bm{A}_{i,:}^{\top}$ as its $i$-th row, $\bm{A}_{:,j}$ as its $j$-th column. For a symmetric square matrix $\bm{A}$, we denote $\lambda_{\max}(\bm{A}), \lambda_{\min}(\bm{A})$ and $\lambda_i(\bm{A})$ as its maximum, minimum, and $i$-th largest eigenvalue, respectively. Given index set $\mathcal{I}$, we use $[\bm{a}]_{\mathcal{I}}$ or $\bm{a}_{\mathcal{I}}$ to denote a vector with same values as $\bm{a}$ in $\mathcal{I}$ and rest be zero; $\bm{A}_{\mathcal{I}, \mathcal{I}}$ to denote the submatrix of $\bm{A}$ indexed by $\mathcal{I}$. We denote $\|\bm{a}\|_1, \|\bm{a}\|_2, \|\bm{a}\|_{\infty}, \|\bm{A}\|_F, \|\bm{A}\|_{\text{op}}$ as the $\ell_1, \ell_2, \ell_{\infty}$-norm of a vector $\bm{a}$, the Frobenius norm and the operator norm of a matrix $\bm{A}$, respectively. We denote $\mathbb{I}(\cdot)$ as the indicator function, $\|\bm{a}\|_0 := \sum_{i = 1}^d \mathbb{I}(\bm{a}_i \neq 0)$ as the $\ell_0$-norm (i.e., the total number of nonzero components), $\supp(\bm{a}) := \{i \in [d] ~|~ \bm{a}_i \neq 0 \}$ as the support set. We denote $\mathcal{V}_k := \{\bm{y} \in \mathbb{R}^d ~|~ \|\bm{y}\|_2 = 1, \|\bm{y}\|_0 \leq k\}$ as a set of $k$-sparse unit vectors of $d$-dimension, $\mathcal{N}(\mu, \sigma^2)$ as a Gaussian distribution with mean $\mu$ and covariance $\sigma^2$. For two sequences of non-negative reals $\{f_n\}_{n \geq 1}$ and $\{g_n\}_{n \geq 1}$, we use $f_n \lesssim g_n$ to indicate that there is a universal constant $C>0$ such that $f_n \leq C g_n$ for all $n \geq 1$. We further use standard order notation $f_n = O(g_n)$ to represent that $f_n \lesssim g_n$.

\subsection{Literature Review} \label{sec:literature} 

Sparse PCA and its adversarial perturbation have been studied extensively over the past decades. This subsection provides a brief literature review of papers that are most relevant to our contributions concerning MIPs and statistics.  

\textbf{MIPs for sparse PCA.} Briefly speaking, given an arbitrary centered sample matrix $\bm{X} \in \mathbb{R}^{n \times d}$, sparsity level parameter $k \leq d$, and using $\widehat{\bm{\Sigma}} := \frac{1}{n} \bm{X}^{\top} \bm{X}$ to denote uncorrupted sample covariance matrix, the vanilla sparse PCA problem $\max_{\bm{v} \in \mathcal{V}_k} \bm{v}^{\top} \widehat{\bm{\Sigma}} \bm{v}$ can be formulated into a mixed-integer quadratic program by introducing a set of binary variables $\{\bm{z}_i\}_{i = 1}^d$ such that $\mathcal{V}_k = \proj_{\bm{v}} \{(\bm{v}, \bm{z}) \in \mathbb{R}^d \times \{0,1\}^d ~|~ \|\bm{v}\|_2 =1, \|\bm{z}\|_1 \leq k, \bm{v}_i \in[ - \bm{z}_i, \bm{z}_i] ~ \forall ~ i \in [d] \}$. Unlike traditional PCA, it is well known that the above maximization problem for sparse PCA is NP-hard and inapproximable \citep{magdon2017np}. As a result, a series of existing works focus on improving the computational efficiency of this MIP. For example, \citet{berk2019certifiably} proposes a tailored branch-and-bound algorithm for the vanilla sparse PCA. \citet{gally2016computing,bertsimas2022solving,li2024exact} propose equivalent mixed-integer semi-definite programs (MISDPs) with distinct valid inequalities. In addition to finding exact solutions, researchers have actively investigated tractable convex relaxations. A common approach studies the semi-definite programming (SDP) relaxations for vanilla sparse PCA with provable bounds (e.g., \citet{d2004direct,amini2008high,d2008optimal,zhang2012sparse,d2014approximation,kim2022convexification}). More recently, \citet{chan2016approximability,dey2022using,dey2023solving,li2024exact} develop mixed-integer convex programs to approximate the vanilla sparse PCA and provided theoretical worst-case approximation guarantees. \emph{In contrast, this paper explores the properties of robust principal components with non-oblivious adversarial perturbations, where, to the best of our knowledge, existing MISDP relaxation cannot be directly applied.}

\textbf{Statistical results for sparse PCA.} The second category studies the sparse PCA in a purely statistical manner, i.e., without computational considerations. For instance, typical existing statistical results \citep{amini2008high,birnbaum2013minimax,cai2013sparse,vu2013minimax} aim to find
an estimator $\widehat{\bm{v}}$ with unit $\ell_2$-norm that recovers the ground truth under different metrics (e.g., vector recovery, support recovery, subspace recovery) with high probability. Beyond the vanilla sparse PCA, several results for sparse PCA with outliers or oblivious adversarial perturbations have been developed. For instance, \citet{awasthi2020estimating} proposes a computationally efficient algorithm that recovers the ground truth with theoretical guarantees under given corrupted samples. More recently, \citet{d2020sparse,d2021consistent,novikov2023sparse} propose efficient algorithms for solving Sparse PCA to achieve optimal vector recovery while being resilient against additive oblivious adversarial perturbations or under corrupted samples, and further designs new analysis techniques beyond covariance thresholding for sparse PCA with oblivious adversarial perturbations. \emph{As a comparison, this paper investigates theoretical performance guarantees of the robust principal component as non-oblivious adversarial perturbation increases. }

\section{Mixed-Integer Convex Programs}  \label{sec:SPCA-NOAP-refor} 

Given an adversarial uncertainty set $\mathcal{U}_{\|\cdot\|}(\rho)$, the sparse PCA-NOAP~\eqref{eq:SPCA-NOAP} can be written as the following max-min two-stage optimization problem, $\max_{\bm{v} \in \mathcal{V}_k}$ $\min_{\|\bm{E}\| \leq \rho} ~ \frac{1}{n} \|\bm{X} \bm{v} + \bm{E}\bm{v}\|_2^2.$ Considering its inner minimization problem, for any $\bm{v} \in \mathcal{V}_k$, we claim the following proposition holds. 

\begin{proposition} \label{prop:inner-min-refor}
For any $\bm{v} \in \mathcal{V}_k$, the inner minimization problem $\min_{\|\bm{E}\| \leq \rho}$ $\frac{1}{n} \|\bm{X} \bm{v} + \bm{E}\bm{v}\|_2^2$ can be written as $\frac{1}{n} \| \bm{X}\bm{v} - \proj_{\mathcal{F}(\bm{v}; \|\cdot\|, \rho)} (\bm{X} \bm{v}) \|_2^2$, where set $\mathcal{F}(\bm{v}; \|\cdot\|, \rho) := \left\{ \bm{E}\bm{v} \in \mathbb{R}^n ~|~ \|\bm{E}\| \leq \rho \right\}$ and $\proj_{\mathcal{F}}(\bm{x}) := \argmin_{\bm{u} \in \mathcal{F}} \| \bm{x} - \bm{u} \|_2^2$ projects a point $\bm{x}$ onto a given set $\mathcal{F}$. 
\end{proposition}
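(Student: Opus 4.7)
The plan is to reduce the inner minimization to a Euclidean projection by a change of variables that absorbs $\bm{E}\bm{v}$ into a single auxiliary vector. First I would observe that the objective $\|\bm{X}\bm{v} + \bm{E}\bm{v}\|_2^2$ depends on the perturbation $\bm{E}$ only through the vector $\bm{u} := \bm{E}\bm{v} \in \mathbb{R}^n$. By the very definition of $\mathcal{F}(\bm{v}; \|\cdot\|, \rho)$, the set of feasible $\bm{u}$ as $\bm{E}$ ranges over $\{\bm{E} : \|\bm{E}\| \leq \rho\}$ is exactly $\mathcal{F}(\bm{v}; \|\cdot\|, \rho)$. Hence, the inner minimization is equivalent to
$$\min_{\bm{u} \in \mathcal{F}(\bm{v}; \|\cdot\|, \rho)} \tfrac{1}{n}\|\bm{X}\bm{v} + \bm{u}\|_2^2.$$

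Next I would exploit the symmetry of the matrix norm ball. Since every matrix norm is absolutely homogeneous, $\|\bm{E}\| \leq \rho$ if and only if $\|-\bm{E}\|\leq \rho$, so $\mathcal{F}(\bm{v}; \|\cdot\|, \rho)$ is symmetric about the origin. The substitution $\bm{u} \mapsto -\bm{u}$ then rewrites the display above as $\min_{\bm{u} \in \mathcal{F}(\bm{v}; \|\cdot\|, \rho)} \tfrac{1}{n}\|\bm{X}\bm{v} - \bm{u}\|_2^2$, which by the definition of $\proj_{\mathcal{F}(\bm{v}; \|\cdot\|, \rho)}(\bm{X}\bm{v})$ as the closest point of $\mathcal{F}(\bm{v}; \|\cdot\|, \rho)$ to $\bm{X}\bm{v}$ in Euclidean norm is exactly $\tfrac{1}{n}\|\bm{X}\bm{v} - \proj_{\mathcal{F}(\bm{v}; \|\cdot\|, \rho)}(\bm{X}\bm{v})\|_2^2$, giving the claim.

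There is no serious obstacle; the only substantive step is recognizing that the ``plus'' in $\|\bm{X}\bm{v} + \bm{u}\|_2^2$ can be turned into the ``minus'' required by a projection, which needs precisely the central symmetry of the matrix norm ball. For the projection to be attained, I would also note that $\mathcal{F}(\bm{v}; \|\cdot\|, \rho)$ is the image of the compact set $\{\bm{E} : \|\bm{E}\|\leq \rho\}$ under the continuous linear map $\bm{E} \mapsto \bm{E}\bm{v}$, and is therefore compact; convexity of $\mathcal{F}(\bm{v}; \|\cdot\|, \rho)$ (inherited from the linear image of a convex set) is not needed here but becomes important in the subsequent sections when turning this projection into a tractable mixed-integer convex program.
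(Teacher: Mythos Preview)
Your proposal is correct and follows essentially the same approach as the paper: both reduce the minimization over $\bm{E}$ to a minimization over $\bm{u}=\bm{E}\bm{v}\in\mathcal{F}(\bm{v};\|\cdot\|,\rho)$ and identify this with a Euclidean projection. Your argument is in fact more explicit than the paper's, which simply asserts the equality; in particular, you spell out the sign-flip $\bm{u}\mapsto-\bm{u}$ via absolute homogeneity of the norm that converts $\|\bm{X}\bm{v}+\bm{u}\|_2^2$ into the projection form $\|\bm{X}\bm{v}-\bm{u}\|_2^2$, a step the paper leaves implicit.
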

The proof of Proposition~\ref{prop:inner-min-refor} is straightforward and presented in Appendix~\ref{app:inner-min-refor}. Using Proposition~\ref{prop:inner-min-refor}, the two-stage sparse PCA-NOAP~\eqref{eq:SPCA-NOAP} can reduced to a one-stage maximization problem, 
\begin{align}
    \opt(\|\cdot\|, \rho) := \max_{\bm{v} \in \mathcal{V}_k} ~ \frac{1}{n} \| \bm{X}\bm{v} - \proj_{\mathcal{F}(\bm{v}; \|\cdot\|, \rho)} (\bm{X} \bm{v}) \|_2^2 ~~ .  \label{eq:SPCA-NOAP-ref}
\end{align}
Geometrically,  Formulation~\eqref{eq:SPCA-NOAP-ref} aims to find a $\bm{v}^{\tt RS}$ that maximizes the $\ell_2$-distance between $\bm{X}\bm{v}$ and corresponding projected point onto the uncertainty set $\mathcal{F}(\bm{v}; \|\cdot\|, \rho)$, where its optimal value is a function dependent on the uncertainty type measured by the norm $\|\cdot\|$ and uncertainty level $\rho \geq 0$. Notably, once the sample matrix satisfies that $\|\bm{X}\| \leq \rho$, the above projection outputs $\proj_{\mathcal{F}(\bm{v}; \|\cdot\|, \rho)} (\bm{X} \bm{v}) = \bm{X} \bm{v}$, which leads to the trivial case with $\opt(\|\cdot\|, \rho) = 0$. Therefore, in this paper, $\rho < \|\bm{X}\|$ is assumed to avoid this uninteresting case. In summary, Formulation~\eqref{eq:SPCA-NOAP-ref} gives a general/abstract one-stage reformulation of sparse PCA-NOAP~\eqref{eq:SPCA-NOAP}. Whether the above Formulation~\eqref{eq:SPCA-NOAP-ref} has a closed-form formulation only depends on whether the projection $\proj_{\mathcal{F}(\bm{v}; \|\cdot\|, \rho)} (\bm{X} \bm{v})$ has a closed-form solution. This paper focuses on two typical types of adversarial perturbations, e.g., \emph{sample-wise adversarial perturbation} and \emph{feature-wise adversarial perturbation}, to be detailed in Sections~\ref{sec:samplewise-MIP} and \ref{sec:featurewise-MIP}.   

\subsection{Uncertainty set $\mathcal{U}_{\|\cdot\|_{2 \rightarrow \infty}}(\rho)$ for sample-wise perturbation} \label{sec:samplewise-MIP}
With uncertainty set $\mathcal{U}_{\|\cdot\|_{2 \rightarrow \infty}}(\rho)$, the sample-wise adversarial perturbations are controlled by $\ell_{2 \rightarrow \infty}$-norm  ($\|\bm{E}\|_{2 \rightarrow \infty} := \max_{i = 1}^n \|\bm{E}_{i, :}\|_2 \leq \rho$), where the largest $\ell_2$-norm of all row vectors is upper bounded by $\rho$. In other words, a perturbation is placed on every sample $\bm{x}^i$ by adding an adversarial vector with $\ell_2$-norm at most $\rho$. The following Proposition \ref{prop:samplewise-set} gives a reformulation for such sample-wise perturbation.

\begin{proposition}\label{prop:samplewise-set} 
For any $\bm{v} \in \mathcal{V}_k$,  the resulting feasible set for adversarial perturbations $\mathcal{F}(\bm{v};\|\cdot\|_{2 \rightarrow \infty}, \rho)$ satisfies $\mathcal{F}(\bm{v};\|\cdot\|_{2 \rightarrow \infty}, \rho) = \left\{ \bm{u} \in \mathbb{R}^n ~|~ \|\bm{u}\|_{\infty} \leq \rho \|\bm{v}\|_2 \right\}$. 
\end{proposition}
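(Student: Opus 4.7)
The plan is to establish the claimed equality via two-sided set inclusion, which is natural since the statement equates one set described parametrically (as an image under a linear map over a norm ball) with one described by an explicit norm constraint.

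First, I would prove the inclusion $\mathcal{F}(\bm{v};\|\cdot\|_{2\to\infty},\rho) \subseteq \{\bm{u}\in\mathbb{R}^n : \|\bm{u}\|_\infty \leq \rho\|\bm{v}\|_2\}$. Take any $\bm{u} = \bm{E}\bm{v}$ with $\|\bm{E}\|_{2\to\infty} \leq \rho$. Then each coordinate satisfies $\bm{u}_i = \bm{E}_{i,:}\bm{v}$, so by the Cauchy–Schwarz inequality $|\bm{u}_i| \leq \|\bm{E}_{i,:}\|_2 \|\bm{v}\|_2 \leq \rho \|\bm{v}\|_2$. Taking the maximum over $i$ yields $\|\bm{u}\|_\infty \leq \rho\|\bm{v}\|_2$.

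Next, I would prove the reverse inclusion by an explicit construction. Given $\bm{u}\in\mathbb{R}^n$ with $\|\bm{u}\|_\infty \leq \rho\|\bm{v}\|_2$, define the rank-one perturbation matrix $\bm{E}$ row-by-row as $\bm{E}_{i,:} = (\bm{u}_i/\|\bm{v}\|_2^2)\,\bm{v}^\top$, which is well-defined because $\bm{v}\in\mathcal{V}_k$ forces $\|\bm{v}\|_2 = 1 \neq 0$. Direct computation gives $\bm{E}\bm{v} = \bm{u}$, and each row satisfies $\|\bm{E}_{i,:}\|_2 = |\bm{u}_i|/\|\bm{v}\|_2 \leq \rho$, so $\|\bm{E}\|_{2\to\infty}\leq \rho$. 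Hence $\bm{u}\in\mathcal{F}(\bm{v};\|\cdot\|_{2\to\infty},\rho)$.

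There is no real obstacle here; the result is essentially the operator-norm identity for the map $\bm{E}\mapsto \bm{E}\bm{v}$ restricted to row-wise $\ell_2$ bounds, and Cauchy–Schwarz is tight precisely when each row of $\bm{E}$ is aligned with $\bm{v}$, which is exactly what the constructive step exploits. The only mild thing to keep track of is that $\bm{v}\neq 0$ so division by $\|\bm{v}\|_2^2$ is legitimate, which is automatic on $\mathcal{V}_k$.
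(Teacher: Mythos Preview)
Your proposal is correct and matches the paper's own proof essentially line for line: both directions use Cauchy--Schwarz for the forward inclusion and the same rank-one row construction $\bm{E}_{i,:} = (\bm{u}_i/\|\bm{v}\|_2^2)\,\bm{v}^\top$ for the reverse inclusion.
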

Thus, projecting $\bm{X}\bm{v}$ onto $\mathcal{F}(\bm{v};\|\cdot\|_{2 \rightarrow \infty}, \rho)$ results in $\proj_{\mathcal{F}(\bm{v};\|\cdot\|_{2 \rightarrow \infty}, \rho)} (\bm{X} \bm{v}) = \bm{u} \in \mathbb{R}^n$ with a closed-form $\bm{u}_i = \left\{ 
    \begin{array}{lll}
        \langle \bm{x}^i, \bm{v} \rangle & \text{if } |\langle \bm{x}^i, \bm{v} \rangle| \leq \rho \|\bm{v}\|_2 \\
        \rho \|\bm{v}\|_2  & \text{if } \langle \bm{x}^i, \bm{v} \rangle > \rho \|\bm{v}\|_2 \\
        - \rho \|\bm{v}\|_2 & \text{if } \langle \bm{x}^i, \bm{v} \rangle < - \rho \|\bm{v}\|_2
    \end{array}
    \right.$ for all $i \in [n]$. 
Plugging the above result into the sparse PCA-NOAP formulation \eqref{eq:SPCA-NOAP-ref} gives
\begin{align*}
    \max_{\bm{v} \in \mathcal{V}_k} ~ \frac{1}{n} \sum_{i = 1}^n \ell_{\rho}(\langle \bm{x}^i, \bm{v}\rangle ) ~\text{with}~
    \ell_{\rho}(\langle \bm{x}, \bm{v}\rangle) := \left\{ 
    \begin{array}{lll}
        0 & \text{ if } |\langle \bm{x}, \bm{v}\rangle| \leq \rho \|\bm{v}\|_2 \\
        (\langle \bm{x}, \bm{v}\rangle - \rho\|\bm{v}\|_2 )^2 & \text{ if } \langle \bm{x}, \bm{v}\rangle > \rho \|\bm{v}\|_2 \\
        (\langle \bm{x}, \bm{v}\rangle + \rho\|\bm{v}\|_2)^2 & \text{ if } \langle \bm{x}, \bm{v}\rangle < - \rho \|\bm{v}\|_2
    \end{array}
    \right.
\end{align*}
Note that any $\bm{v} \in \mathcal{V}_k$ satisfies $\|\bm{v}\|_2 = 1$, thus the above optimization can be written as maximizing a convex function
\begin{align}
    \max_{\bm{v} \in \mathcal{V}_k} ~ \frac{1}{n} \sum_{i = 1}^n \ell_{\rho}^= (\langle \bm{x}^i, \bm{v}\rangle) ~\text{with}~\ell_{\rho}^{=}(\langle \bm{x}, \bm{v}\rangle) := \left\{
    \begin{array}{lll}
        0 & \text{ if } |\langle \bm{x}, \bm{v}\rangle| \leq \rho \\
        (\langle \bm{x}, \bm{v}\rangle - \rho )^2 & \text{ if } \langle \bm{x}, \bm{v}\rangle > \rho \\
        (\langle \bm{x}, \bm{v}\rangle + \rho )^2 & \text{ if } \langle \bm{x}, \bm{v}\rangle < - \rho
    \end{array}
    \right. \label{eq:samplewise-spca-1}
\end{align}
For convenience, in the rest of this paper, we use $\opt_k^{2 \rightarrow \infty}$ to denote the optimal value of Formulation~\eqref{eq:samplewise-spca-1}. Next, we propose a computationally tractable MIP reformulation for \eqref{eq:samplewise-spca-1}. Its main idea is to separate the objective function $\ell_{\rho}^{=}$ as a sum of a convex quadratic term and a concave term denoted by $\phi_{\rho}$.
\begin{align*}
    \ell_{\rho}^{=}(\langle \bm{x}^i, \bm{v}\rangle) = \langle \bm{x}^i, \bm{v}\rangle^2 + \phi_{\rho}(\langle \bm{x}^i, \bm{v}\rangle) ~~\text{with}~~ \phi_{\rho}(t) = \left\{
    \begin{array}{ll}
        - t^2 & \text{ if } ~  |t| \leq \rho \\
        - 2 \rho t + \rho^2 & \text{ if } ~ t > \rho \\
        2 \rho t + \rho^2 & \text{ if } ~ t < - \rho
    \end{array}
    \right. ~.
\end{align*}
Formulation~\eqref{eq:samplewise-spca-1} can be represented as $\max_{\bm{v} \in \mathcal{V}_k}  \frac{1}{n} \sum_{i = 1}^n \left[ \langle \bm{x}^i, \bm{v} \rangle^2 + \phi_{\rho}(\langle \bm{x}^i, \bm{v} \rangle) \right]$. Since the optimality of \eqref{eq:samplewise-spca-1} is always achieved with $\|\bm{v}\|_2 = 1$, the non-convex feasible set $\bm{v} \in \mathcal{V}_k$ can be relaxed to $\bm{v} \in \overline{\mathcal{V}}_k := \{ \bm{v} ~|~ \|\bm{v}\|_2 \leq 1, \|\bm{v}\|_0 \leq k \}$. Let $\widehat{\bm{\Sigma}} := \frac{1}{n} \bm{X}^{\top} \bm{X} = \frac{1}{n} \sum_{i = 1}^n \bm{x}^i (\bm{x}^i)^{\top}$ be the uncorrputed empirical covariance matrix. By introducing a new set of variables $\phi := \{\phi^i\}_{i = 1}^n$ as lower bounds for $\phi_{\rho} (\langle \bm{x}^i, \bm{v} \rangle)$, Formulation~\eqref{eq:samplewise-spca-1} becomes
\begin{align*}
    & ~ \begin{array}{rlll}
        \max\limits_{\bm{v}, \phi} &~ \bm{v}^{\top} \widehat{\bm{\Sigma}} \bm{v} + \frac{1}{n} \sum_{i = 1}^n \phi^i \\
        \text{s.t.} &~ \bm{v} \in \overline{\mathcal{V}}_k \\
        &~ \phi^i \leq \phi_{\rho} (\langle \bm{x}^i, \bm{v} \rangle) ~~ \forall ~ i \in [n] 
    \end{array} ~\Longleftrightarrow~ \begin{array}{rlll}
        \max\limits_{\bm{v}, \phi} &~ \obj + \frac{1}{n} \sum_{i = 1}^n \phi^i \\
        \text{s.t.} &~ \bm{v} \in \overline{\mathcal{V}}_k, ~ \obj = \bm{v}^{\top} \widehat{\bm{\Sigma}} \bm{v}\\
        &~ \phi^i \leq \phi_{\rho} (\langle \bm{x}^i, \bm{v} \rangle) ~~ \forall ~ i \in [n]
    \end{array} ~ .
\end{align*}
It remains to handle the non-convex constraint $\obj = \bm{v}^{\top} \widehat{\bm{\Sigma}} \bm{v}$. Let $\widehat{\bm{\Sigma}} := \sum_{i = 1}^d \lambda_j \bm{v}_j \bm{v}_j^{\top}$ be singular value decomposition of $\widehat{\bm{\Sigma}}$ with $\lambda_1 \geq \cdots \geq \lambda_d$. Using \emph{special-ordered set type-II (SOS-II)} \citep{wolsey2014integer}, for any subset $\mathcal{J} \subseteq [d]$, let the set for Piecewise Linear Upper Approximation (PLU) be 
\begin{align}
    \PLU(\mathcal{J}) := \left\{  (\bm{g}_j, \bm{\xi}_j, \eta_j)_{j \in \mathcal{J}} ~\left|~
    \begin{array}{lll}
        \bm{g}_j = \langle \bm{v}_j, \bm{v} \rangle = \sum_{\ell = - N}^N \theta_j^{\ell} \cdot \eta_j^{\ell} & ~ \forall ~ j \in \mathcal{J} \\
        \bm{\xi}_j = \sum_{\ell = - N}^N (\theta_j^{\ell})^2 \cdot \eta_j^{\ell} & ~ \forall ~ j \in \mathcal{J} \\
        \sum_{\ell = -N}^{N} \eta_j^{\ell} = 1 & ~ \forall ~ j \in \mathcal{J} \\
        \{\eta_j^{\ell} \}_{\ell = -N}^N \in \text{SOS-II} \cap \mathbb{R}_+^{2N + 1} & ~ \forall ~ j \in \mathcal{J} 
    \end{array} \right. 
    \right\} ~, \label{eq:PLU-set}
\end{align}
where $\{\theta_j^{\ell}\}_{\ell = -N}^N$ denotes a sequence of splitting points that evenly separate the interval $[-1,1]$ into $2N$ equal parts, i.e., $\theta_j^{\ell} = \frac{\ell}{N}$ for $\ell = -N, \ldots, N$. Based on the above definition, the following theorem introduces a computationally tractable MIP for \eqref{eq:samplewise-spca-1} with a provable upper bound. 
\begin{theorem}\label{thm:samplewise-spca-MIP}
Formulation~\eqref{eq:samplewise-spca-1} can be approximated by the following MIP.
\begin{align}
    \begin{array}{rlll}
        \ub_k^{2 \rightarrow \infty} := \max\limits_{\bm{v}, \phi, \bm{g}, \bm{\xi}, \eta} & \sum_{j = 1}^d \lambda_j \bm{\xi}_j  + \frac{1}{n} \sum_{i = 1}^n \phi^i \\
        \emph{s.t.}~ & \bm{v} \in \overline{\mathcal{V}}_k, (\bm{g}, \bm{\xi}, \eta) \in \PLU([d]) \\
        & \phi^i \leq \phi_{\rho} (\langle \bm{x}^i, \bm{v} \rangle) & ~ \forall ~ i \in [n] 
    \end{array} ~ , \label{eq:samplewise-CMIP}
\end{align}
where its optimal value $\ub_k^{2 \rightarrow \infty}$ satisfies $\opt_k^{2 \rightarrow \infty} \leq \ub_k^{2 \rightarrow \infty} \leq \opt_k^{2 \rightarrow \infty} + \frac{1}{4 N^2} \sum_{j = 1}^d \lambda_j $. 
\end{theorem}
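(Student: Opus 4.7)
The plan is to relate the MIP \eqref{eq:samplewise-CMIP} to the nonconvex program \eqref{eq:samplewise-spca-1} by exploiting that the $\PLU$ set encodes a piecewise-linear \emph{overestimator} of $t \mapsto t^2$ on $[-1,1]$. Since SOS-II forces at most two adjacent weights $\eta_j^{\ell}, \eta_j^{\ell+1}$ to be nonzero with $\eta_j^{\ell} + \eta_j^{\ell+1} = 1$, the pair $(\bm{g}_j, \bm{\xi}_j)$ in $\PLU([d])$ lies on the chord of $t \mapsto t^2$ joining two adjacent grid points $\theta_j^{\ell} = \ell/N$, with $\bm{g}_j = \langle \bm{v}_j, \bm{v}\rangle$. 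Hence the first piece of the MIP objective, $\sum_j \lambda_j \bm{\xi}_j$, is a piecewise-linear over-approximation of $\bm{v}^{\top} \widehat{\bm{\Sigma}} \bm{v} = \sum_j \lambda_j \langle \bm{v}_j, \bm{v}\rangle^2$, while the second piece $\tfrac{1}{n} \sum_i \phi^i$ is precisely the same as the concave term $\phi_{\rho}$ used in the splitting $\ell_\rho^=(t) = t^2 + \phi_\rho(t)$.

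For the lower bound $\opt_k^{2 \rightarrow \infty} \leq \ub_k^{2 \rightarrow \infty}$, I would take any $\bm{v}^{\star}$ achieving $\opt_k^{2 \rightarrow \infty}$, set $\bm{g}_j^{\star} := \langle \bm{v}_j, \bm{v}^{\star}\rangle \in [-1,1]$, place each $\bm{g}_j^{\star}$ in its containing subinterval with the unique SOS-II convex combination, define $\bm{\xi}_j^{\star}$ accordingly, and take $\phi^{i,\star} := \phi_{\rho}(\langle \bm{x}^i, \bm{v}^{\star}\rangle)$. Convexity of $t \mapsto t^2$ gives $\bm{\xi}_j^{\star} \geq (\bm{g}_j^{\star})^2$ pointwise; multiplying by $\lambda_j \geq 0$ (since $\widehat{\bm{\Sigma}} \succeq 0$) and summing shows the MIP value at this feasible point is at least $\opt_k^{2 \rightarrow \infty}$.

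For the upper bound, I would take an optimal MIP solution $(\bm{v}^{\ub}, \phi^{\ub}, \bm{g}^{\ub}, \bm{\xi}^{\ub}, \eta^{\ub})$ and argue two things. First, because $\bm{v}^{\ub} \in \overline{\mathcal{V}}_k$ and $\phi^{i,\ub} \leq \phi_{\rho}(\langle \bm{x}^i, \bm{v}^{\ub}\rangle)$, the vector $\bm{v}^{\ub}$ is feasible for the relaxed version of \eqref{eq:samplewise-spca-1} on $\overline{\mathcal{V}}_k$ (whose optimum equals $\opt_k^{2 \rightarrow \infty}$, as observed in the paper), so $(\bm{v}^{\ub})^{\top} \widehat{\bm{\Sigma}} \bm{v}^{\ub} + \tfrac{1}{n}\sum_i \phi^{i,\ub} \leq \opt_k^{2 \rightarrow \infty}$. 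Second, a direct single-variable computation shows that on the subinterval $[a, a+1/N]$ the chordal over-approximation error of $t^2$ equals $s(1/N - s)$ with $s = t - a \in [0, 1/N]$, maximized at $s = 1/(2N)$ with value $1/(4N^2)$; hence $\bm{\xi}_j^{\ub} - (\bm{g}_j^{\ub})^2 \leq 1/(4N^2)$. Weighting by $\lambda_j \geq 0$ and summing yields $\sum_j \lambda_j \bm{\xi}_j^{\ub} \leq (\bm{v}^{\ub})^{\top} \widehat{\bm{\Sigma}} \bm{v}^{\ub} + \tfrac{1}{4N^2}\sum_j \lambda_j$. Adding the two inequalities gives the stated bound.

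The main technical point, and the only step requiring genuine care, is verifying the interpolation error constant $1/(4N^2)$ and tying it cleanly to the aggregate objective. Two ingredients are used in an essential way: convexity of $t \mapsto t^2$ (for the one-sided chord inequality) and $\lambda_j \geq 0$ for all $j$ (so that pointwise over-approximation lifts to an over-approximation of $\bm{v}^{\top}\widehat{\bm{\Sigma}}\bm{v}$). Once these are in place, the SOS-II encoding and the decomposition $\ell_\rho^= = t^2 + \phi_\rho$ reduce the remainder of the argument to routine bookkeeping.
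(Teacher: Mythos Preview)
Your proposal is correct and follows essentially the same approach as the paper's proof: both rely on the key fact that the SOS-II piecewise-linear interpolant of $t\mapsto t^2$ on the uniform grid $\{\ell/N\}_{\ell=-N}^N$ satisfies $\bm{\xi}_j \in [\bm{g}_j^2,\ \bm{g}_j^2 + 1/(4N^2)]$, and then aggregate via $\lambda_j \ge 0$. Your write-up is in fact more explicit than the paper's---you separately construct feasible points for each inequality and supply the one-line computation $s(1/N - s) \le 1/(4N^2)$ for the chord error that the paper simply asserts.
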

The proof of Theorem~\ref{thm:samplewise-spca-MIP} is presented in Appendix~\ref{app:samplewise-spca-MIP}. One can improve the computational efficiency of the above formulation by reducing the number of SOS-II variables with an additional additive gap, as stated in Proposition~\ref{prop:samplewise-spca-MIP-r}. 
\begin{proposition}\label{prop:samplewise-spca-MIP-r}
Given a pre-determined integer $r \leq d$, one can further approximate the Formulation~\eqref{eq:samplewise-spca-1} by the following mixed
integer convex program.
\begin{align}
    \begin{array}{rlll}
        \ub_k^{2 \rightarrow \infty}(r) := \max\limits_{\bm{v}, \phi, \bm{g}, \bm{\xi}, \eta, \gamma} & \sum_{j = 1}^r \lambda_j \bm{\xi}_j  + \lambda_{r + 1} \gamma  + \frac{1}{n} \sum_{i = 1}^n \phi^i \\
        \emph{s.t.} ~~& \bm{v} \in \overline{\mathcal{V}}_k, (\bm{g}_j, \bm{\xi}_j, \eta_j)_{j \in [r]} \in \PLU([r])\\
        & \sum_{j = 1}^r \bm{g}_j^2 \leq 1 - \gamma, ~ \gamma \geq 0\\
        & \phi^i \leq \phi_{\rho} (\langle \bm{x}^i, \bm{v} \rangle) & ~ \forall ~ i \in [n] 
    \end{array} ~ . \label{eq:samplewise-MIP-r} 
\end{align}
The optimal value $\ub^{2 \rightarrow \infty}(r)$ satisfies $\opt^{2 \rightarrow \infty}
\leq \ub^{2 \rightarrow \infty}(r)\leq \opt^{2 \rightarrow \infty} + \frac{1}{4N^2} \sum_{j = 1}^r \lambda_j + \widehat{\gamma} (\lambda_{r + 1} - \lambda_d)$ with $\widehat{\gamma}$ being the value of $\gamma$ in an optimal solution of the above mixed integer convex program. 
\end{proposition}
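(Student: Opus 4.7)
The plan closely parallels the proof of Theorem~\ref{thm:samplewise-spca-MIP}, reusing the SOS-II $\PLU(\cdot)$ block and the convex--concave split $\ell_\rho^=(t) = t^2 + \phi_\rho(t)$. The only new ingredient is that PLU now tracks only the top $r$ eigendirections, while the tail mass $\sum_{j>r} \lambda_j \langle \bm{v}_j, \bm{v}\rangle^2$ is overestimated by $\lambda_{r+1} \gamma$ under the constraint $\sum_{j=1}^r \bm{g}_j^2 + \gamma \leq 1$. I would establish $\opt^{2\rightarrow\infty} \leq \ub^{2\rightarrow\infty}(r)$ and $\ub^{2\rightarrow\infty}(r) \leq \opt^{2\rightarrow\infty} + \frac{1}{4N^2}\sum_{j=1}^r \lambda_j + \widehat{\gamma}(\lambda_{r+1}-\lambda_d)$ in turn.

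For the lower bound, take a maximizer $\bm{v}^*$ of~\eqref{eq:samplewise-spca-1} (so $\|\bm{v}^*\|_2 = 1$ and $\|\bm{v}^*\|_0 \leq k$), set $\bm{g}_j := \langle \bm{v}_j, \bm{v}^*\rangle$ for $j \in [r]$ together with the SOS-II realization giving $\bm{\xi}_j \geq \bm{g}_j^2$, choose $\gamma := 1 - \sum_{j=1}^r \bm{g}_j^2 = \sum_{j>r}\langle \bm{v}_j, \bm{v}^*\rangle^2 \geq 0$, and $\phi^i := \phi_\rho(\langle \bm{x}^i,\bm{v}^*\rangle)$. Since $\lambda_{r+1} \geq \lambda_j$ for every $j > r$, the objective of~\eqref{eq:samplewise-MIP-r} at this feasible tuple dominates $(\bm{v}^*)^\top \widehat{\bm{\Sigma}} \bm{v}^* + \frac{1}{n} \sum_i \phi_\rho(\langle \bm{x}^i, \bm{v}^* \rangle) = \opt^{2\rightarrow\infty}$. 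For the upper bound, let $(\bm{v},\phi,\bm{g},\bm{\xi},\eta,\widehat{\gamma})$ achieve $\ub^{2\rightarrow\infty}(r)$. Applying the PLU precision $\bm{\xi}_j \leq \bm{g}_j^2 + \frac{1}{4N^2}$ from Theorem~\ref{thm:samplewise-spca-MIP}, rewriting $\sum_{j=1}^r \lambda_j \bm{g}_j^2 = \bm{v}^\top \widehat{\bm{\Sigma}} \bm{v} - \sum_{j>r} \lambda_j \langle \bm{v}_j,\bm{v}\rangle^2$, invoking $\phi^i \leq \phi_\rho(\langle \bm{x}^i,\bm{v}\rangle)$ and the tight relaxation $\frac{1}{n}\sum_i \ell_\rho^=(\langle \bm{x}^i,\bm{v}\rangle) \leq \opt^{2\rightarrow\infty}$ (valid because $\ell_\rho^=$ is convex with $\ell_\rho^=(0)=0$), the task reduces to verifying
\[
\lambda_{r+1} \widehat{\gamma} - \sum_{j>r} \lambda_j \langle \bm{v}_j, \bm{v}\rangle^2 \leq \widehat{\gamma}(\lambda_{r+1}-\lambda_d),
\]
equivalently $\lambda_d \widehat{\gamma} \leq \sum_{j>r} \lambda_j \langle \bm{v}_j, \bm{v}\rangle^2$. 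Lower-bounding the right-hand side by $\lambda_d (\|\bm{v}\|_2^2 - \sum_{j=1}^r \bm{g}_j^2)$ and invoking the constraint $\widehat{\gamma} \leq 1 - \sum_{j=1}^r \bm{g}_j^2$, the claim reduces to $\|\bm{v}\|_2 = 1$ at the MIP-r optimum.

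The main obstacle is therefore showing that an optimizer of~\eqref{eq:samplewise-MIP-r} can be taken with $\|\bm{v}\|_2 = 1$. This is not automatic because, after eliminating the auxiliary variables, the effective objective in $\bm{v}$ decomposes (up to PLU error) as
\[
\tfrac{1}{n}\sum_i \ell_\rho^=(\langle \bm{x}^i,\bm{v}\rangle) + \sum_{j>r} (\lambda_{r+1}-\lambda_j) \langle \bm{v}_j,\bm{v}\rangle^2 + \lambda_{r+1}(1-\|\bm{v}\|_2^2),
\]
whose first two summands are convex in $\bm{v}$ and vanish at the origin (hence maximized on the unit sphere via $h(ct)\geq c\,h(t)$ for convex $h$ with $h(0)=0$ and $c\geq 1$), while the third piece strictly favors the interior. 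I would resolve this by a targeted scaling argument: for any optimizer with $\|\bm{v}\|_2 = s < 1$, replace $\bm{v}$ by $\bm{v}/s$ and retune the auxiliary variables, tracking how the scaling-gain in the convex pieces compensates the loss in the $\lambda_{r+1}(1-\|\bm{v}\|_2^2)$ term; any residual degenerate case where both convex summands vanish at the optimizer is handled by direct inspection under the standing assumption $\rho < \|\bm{X}\|$.
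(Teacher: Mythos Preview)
Your lower-bound argument is correct and matches the paper's proof. For the upper bound you actually go further than the paper, which merely writes ``Similarly, the piece-wise linear approximation from SOS-II ensures that\dots'' without details; you correctly isolate the crux, namely that after the PLU estimate and the identity $\ell_\rho^{=}(t)=t^2+\phi_\rho(t)$ everything reduces to $\lambda_d\,\widehat{\gamma}\le \sum_{j>r}\lambda_j\langle \bm v_j,\bm v\rangle^2$, which would follow from $\|\bm v\|_2=1$ at the MIP-r optimizer.

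The gap is that your scaling argument cannot force $\|\bm v\|_2=1$, and in fact the obstacle is real rather than merely technical. The tuple $(\bm v,\gamma,\bm\xi,\bm g,\phi)=(\bm 0,1,\bm 0,\bm 0,\bm 0)$ is feasible for~\eqref{eq:samplewise-MIP-r} with objective $\lambda_{r+1}$, so one always has $\ub_k^{2\to\infty}(r)\ge \lambda_{r+1}$. Now take $d=2$, $r=1$, $k=2$, samples $\bm x^1=\sqrt{20}\,\bm v_1$, $\bm x^2=\sqrt{10}\,\bm v_2$ (hence $\lambda_1=10$, $\lambda_2=5$), and $\rho=4<\|\bm X\|_{2\to\infty}=\sqrt{20}$. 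Then $\opt_k^{2\to\infty}=\tfrac12(\sqrt{20}-4)^2\approx 0.11$, while the claimed bound is $\opt_k^{2\to\infty}+\tfrac{\lambda_1}{4N^2}+\widehat\gamma(\lambda_2-\lambda_2)\approx 0.11+\tfrac{10}{4N^2}$, far below $\ub_k^{2\to\infty}(r)\ge 5$. This is exactly the ``degenerate case'' (both convex summands vanish at the optimizer $\bm v=\bm 0$) that you defer to direct inspection, and inspection shows the inequality fails. So neither your scaling route nor the paper's one-line appeal establishes the stated upper bound; it seems to require either keeping $\bm v\in\mathcal V_k$ (rather than $\overline{\mathcal V}_k$) in~\eqref{eq:samplewise-MIP-r} or tightening the constraint on $\gamma$ to $\gamma\le \|\bm v\|_2^2-\sum_{j\le r}\bm g_j^2$.
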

The proof of Proposition~\ref{prop:samplewise-spca-MIP-r} is given in Appendix~\ref{app:samplewise-spca-MIP-r}.

\subsection{Uncertainty set $\mathcal{U}_{\|\cdot\|_{1 \rightarrow 2}}(\rho)$ for feature-wise perturbation} \label{sec:featurewise-MIP}
With uncertainty set $\mathcal{U}_{\|\cdot\|_{1 \rightarrow 2}}(\rho)$, the adversarial perturbations are controlled by $\ell_{1 \rightarrow 2}$-norm ($\|\bm{E}\|_{1 \rightarrow 2} := \max_{j = 1}^d \|\bm{E}_{:,j}\|_2 \leq \rho$), where the largest $\ell_2$-norm among all columns is upper bounded by $\rho$. In this case, the perturbation is placed on every feature/factor by adding an adversarial vector with $\ell_2$-norm at most $\rho$. Similarly, we can derive the following proposition. 

\begin{proposition}\label{prop:featurewise-set}
For any $\bm{v} \in \mathcal{V}_k$, the feasible set for adversarial perturbations $\mathcal{F}(\bm{v};\|\cdot\|_{1 \rightarrow 2}, \rho)$ satisfies $\mathcal{F}(\bm{v};\|\cdot\|_{1 \rightarrow 2}, \rho) = \left\{ \bm{u} \in \mathbb{R}^n ~|~ \|\bm{u}\|_2 \leq \rho \|\bm{v}\|_1 \right\}$. 
\end{proposition}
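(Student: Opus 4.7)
The plan is to prove the set equality $\mathcal{F}(\bm{v};\|\cdot\|_{1 \rightarrow 2}, \rho) = \{ \bm{u} \in \mathbb{R}^n ~|~ \|\bm{u}\|_2 \leq \rho \|\bm{v}\|_1 \}$ by separately establishing the two inclusions. Throughout, I will exploit the identity $\bm{E}\bm{v} = \sum_{j=1}^d \bm{v}_j \bm{E}_{:,j}$, which converts the matrix-vector product into a linear combination of the columns of $\bm{E}$ weighted by the entries of $\bm{v}$. Since $\bm{v} \in \mathcal{V}_k$ satisfies $\|\bm{v}\|_2 = 1$, we have $\bm{v} \neq 0$, so $\|\bm{v}\|_1 > 0$ and there is no degeneracy to worry about.

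For the forward inclusion ($\subseteq$), I will take any $\bm{u} = \bm{E}\bm{v}$ with $\|\bm{E}\|_{1 \rightarrow 2} \leq \rho$ and apply the triangle inequality to the decomposition above:
\begin{align*}
\|\bm{u}\|_2 \;=\; \Bigl\| \sum_{j=1}^d \bm{v}_j \bm{E}_{:,j} \Bigr\|_2 \;\leq\; \sum_{j=1}^d |\bm{v}_j|\, \|\bm{E}_{:,j}\|_2 \;\leq\; \rho \sum_{j=1}^d |\bm{v}_j| \;=\; \rho \|\bm{v}\|_1,
\end{align*}
where the last inequality uses $\|\bm{E}_{:,j}\|_2 \leq \|\bm{E}\|_{1 \rightarrow 2} \leq \rho$ for every column index $j$.

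For the reverse inclusion ($\supseteq$), I will provide an explicit construction of a feasible $\bm{E}$ realizing any target $\bm{u}$ with $\|\bm{u}\|_2 \leq \rho \|\bm{v}\|_1$. The natural choice is to align every column of $\bm{E}$ with $\bm{u}$, signed according to $\bm{v}$: for each $j \in \supp(\bm{v})$ set $\bm{E}_{:,j} = \frac{\operatorname{sign}(\bm{v}_j)}{\|\bm{v}\|_1}\, \bm{u}$, and set $\bm{E}_{:,j} = 0$ for $j \notin \supp(\bm{v})$. Then
\begin{align*}
\bm{E}\bm{v} \;=\; \sum_{j \in \supp(\bm{v})} \bm{v}_j \cdot \frac{\operatorname{sign}(\bm{v}_j)}{\|\bm{v}\|_1}\, \bm{u} \;=\; \frac{1}{\|\bm{v}\|_1} \Bigl( \sum_{j \in \supp(\bm{v})} |\bm{v}_j| \Bigr) \bm{u} \;=\; \bm{u},
\end{align*}
and every column satisfies $\|\bm{E}_{:,j}\|_2 \leq \|\bm{u}\|_2/\|\bm{v}\|_1 \leq \rho$, so $\|\bm{E}\|_{1 \rightarrow 2} \leq \rho$ as required.

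I do not anticipate a serious obstacle: the forward direction is a one-line triangle-inequality calculation, and the reverse direction reduces to the simple idea that the $\ell_{1 \rightarrow 2}$ constraint couples columns only through a maximum, so we may concentrate all the ``output energy'' $\bm{u}$ uniformly across the support of $\bm{v}$ with the appropriate signs. The only subtlety is making sure the signed weights $\operatorname{sign}(\bm{v}_j)/\|\bm{v}\|_1$ combine with the coefficients $\bm{v}_j$ to reproduce $\bm{u}$ exactly, which is handled by the identity $\bm{v}_j \operatorname{sign}(\bm{v}_j) = |\bm{v}_j|$ and $\sum_{j \in \supp(\bm{v})} |\bm{v}_j| = \|\bm{v}\|_1$.
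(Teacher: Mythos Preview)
Your proposal is correct and follows essentially the same approach as the paper: the forward inclusion via the triangle inequality on $\bm{E}\bm{v}=\sum_j \bm{v}_j\bm{E}_{:,j}$, and the reverse inclusion via the explicit construction $\bm{E}_{:,j}=\operatorname{sign}(\bm{v}_j)\,\bm{u}/\|\bm{v}\|_1$. If anything, you spell out the forward direction more carefully than the paper, which simply asserts it is ``easy to verify.''
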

Therefore, projecting $\bm{X}\bm{v}$ onto $\mathcal{F}(\bm{v};\|\cdot\|_{1 \rightarrow  2}, \rho)$ gives $\proj_{\mathcal{F}(\bm{v};\|\cdot\|_{1 \rightarrow 2}, \rho)} (\bm{X} \bm{v}) = \bm{u}$ with $$\bm{u} = \left\{ 
    \begin{array}{lll}
        \bm{X} \bm{v} & \text{if } \|\bm{X} \bm{v}\|_2 \leq \rho \|\bm{v}\|_1 \\
        \rho \|\bm{v}\|_1 \bm{X}\bm{v} / \|\bm{X}\bm{v}\|_2 & \text{if } \|\bm{X} \bm{v}\|_2 > \rho \|\bm{v}\|_1
    \end{array}
    \right. ~ .$$ 
Plugging the above projection into the feature-wise perturbed sparse PCA-NOAP Formulation~\eqref{eq:SPCA-NOAP-ref} implies 
\begin{align}
    \opt^{1 \rightarrow 2}_k
    \overset{\tt (i)}{:=} &~ \max_{\bm{v} \in \mathcal{V}_k} ~ \frac{1}{n} \left( 
    \|\bm{X}\bm{v}\|_2 - \rho \|\bm{v}\|_1  \right)^2 ~~\text{s.t.}~~ \|\bm{X}\bm{v}\|_2 - \rho \|\bm{v}\|_1 \geq 0, \label{eq:emp-adv-spca-2}
\end{align} 
where the equality {\tt (i)} holds if and only if the feasible set is non-empty, i.e., $\{\bm{v} ~|~ \|\bm{Xv}\|_2 - \rho \|\bm{v}\|_1 \geq 0\} \cap \mathcal{V}_k \neq \emptyset$; otherwise, the adversarial perturbation is too large to capture any information from the sample covariance. Optimizing the feature-wise perturbed sparse PCA-NOAP~\eqref{eq:emp-adv-spca-2}, in this case, is not meaningful. Note that we can relax the constraint $\bm{v} \in \mathcal{V}_k$ to $\bm{v} \in \overline{\mathcal{V}}_k$ in Formulation~\eqref{eq:emp-adv-spca-2} by rescaling and still maintain the same optimal solution. Using SOS-II constraints, we can derive a computationally tractable MIP to approximate the feature-wise perturbed sparse PCA-NOAP~\eqref{eq:emp-adv-spca-2} with a provable guarantee.
\begin{theorem}\label{thm:featurewise-spca-MIP}
Formulation~\eqref{eq:emp-adv-spca-2} can be approximated by the following MIP. 
\begin{align}
    \begin{array}{rlll}
        \sqrt{\ub^{1 \rightarrow 2}_k} := \max\limits_{\bm{v}, t, \bm{y}, \bm{g}, \bm{\xi}, \eta} & \frac{1}{\sqrt{n}} t \\
        \emph{s.t.}~~ & \bm{v} \in \overline{\mathcal{V}}_k, (\bm{g}, \bm{\xi}, \eta) \in \PLU([d]) \\
        & n \sum_{j = 1}^d \lambda_j \bm{\xi}_j \geq (t + \rho \bm{y})^2 \\
        & t \geq 0, ~ \bm{y} \geq \|\bm{v}\|_1
    \end{array} ~.  \label{eq:featurewise-MIP} 
\end{align}
Moreover, $ \opt^{1 \rightarrow 2}_k\leq \ub^{1 \rightarrow 2}_k \leq \opt^{1 \rightarrow 2}_k + \frac{1}{4 N^2} \sum_{j = 1}^d \lambda_j  $.
\end{theorem}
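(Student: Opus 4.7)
The plan is to establish the two inequalities via a construction-then-estimate strategy parallel to the proof of Theorem~\ref{thm:samplewise-spca-MIP}. The key technical ingredient is the standard error bound for the SOS-II piecewise-linear interpolant of $x\mapsto x^2$ on $[-1,1]$: whenever $\bm{g}_j = \sum_\ell \theta_j^\ell \eta_j^\ell$ and $\bm{\xi}_j = \sum_\ell (\theta_j^\ell)^2 \eta_j^\ell$ with a SOS-II pattern on $\eta_j$, one has $0 \leq \bm{\xi}_j - \bm{g}_j^2 \leq 1/(4N^2)$, the right estimate being the midpoint maximum of $(b-x)(x-a)$ on an interval of length $1/N$, and the left being convexity. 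At any MIP optimizer I may also assume $\bm{y}^* = \|\bm{v}^*\|_1$, since decreasing $\bm{y}$ only relaxes the constraint $(t+\rho\bm{y})^2 \leq n\sum_j \lambda_j \bm{\xi}_j$.

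For the direction $\opt^{1\rightarrow 2}_k \leq \ub^{1\rightarrow 2}_k$, I would fix an arbitrary $\bm{v} \in \mathcal{V}_k$ with $\|\bm{X}\bm{v}\|_2 \geq \rho\|\bm{v}\|_1$ and lift it into a feasible MIP solution. Specifically, set $\bm{g}_j := \langle \bm{v}_j, \bm{v} \rangle \in [-1,1]$, choose $\eta_j$ to express $\bm{g}_j$ as a convex combination of two adjacent breakpoints (consistent with SOS-II), define $\bm{\xi}_j$ by the same combination so that $\bm{\xi}_j \geq \bm{g}_j^2$, and take $\bm{y} := \|\bm{v}\|_1$ and $t := \|\bm{X}\bm{v}\|_2 - \rho\|\bm{v}\|_1$. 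The spectral identity $n\sum_j \lambda_j \bm{g}_j^2 = n\bm{v}^\top \widehat{\bm{\Sigma}} \bm{v} = \|\bm{X}\bm{v}\|_2^2 = (t+\rho\bm{y})^2$ together with $\bm{\xi}_j \geq \bm{g}_j^2$ verifies the SOC constraint. The MIP objective evaluates to $t/\sqrt{n}$, whose square matches the objective of \eqref{eq:emp-adv-spca-2}; maximizing over $\bm{v}$ delivers the inequality.

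For the direction $\ub^{1\rightarrow 2}_k \leq \opt^{1\rightarrow 2}_k + \frac{1}{4N^2}\sum_j \lambda_j$, I would start from an MIP optimizer $(\bm{v}^*, t^*, \bm{y}^*, \bm{g}^*, \bm{\xi}^*, \eta^*)$, apply the PLU estimate and the spectral identity $\sum_j \lambda_j (\bm{g}^*_j)^2 = \|\bm{X}\bm{v}^*\|_2^2/n$ to get
\begin{align*}
(t^* + \rho\|\bm{v}^*\|_1)^2 \;\leq\; n\sum_{j=1}^d \lambda_j \bm{\xi}^*_j \;\leq\; \|\bm{X}\bm{v}^*\|_2^2 + \frac{n}{4N^2}\sum_{j=1}^d \lambda_j,
\end{align*}
and then convert this into an additive gap on $(t^*)^2/n$. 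The main obstacle is exactly this final conversion, because a naive square-root expansion produces an unwanted cross term between $\rho\|\bm{v}^*\|_1$ and the PLU error. I plan to dispatch it by a two-case argument. In the case $\|\bm{X}\bm{v}^*\|_2 \geq \rho\|\bm{v}^*\|_1$, writing $A = \|\bm{X}\bm{v}^*\|_2$, $B = \rho\|\bm{v}^*\|_1$, and $\Delta = \frac{n}{4N^2}\sum_j \lambda_j$, the identity $(\sqrt{A^2+\Delta} - B)^2 = (A-B)^2 + \Delta - 2B(\sqrt{A^2+\Delta} - A)$ combined with $\sqrt{A^2+\Delta} \geq A$ yields $(t^*)^2 \leq (A-B)^2 + \Delta$; since $\bm{v}^*/\|\bm{v}^*\|_2 \in \mathcal{V}_k$ is feasible for \eqref{eq:emp-adv-spca-2} with objective $(A-B)^2/(n\|\bm{v}^*\|_2^2) \geq (A-B)^2/n$ (using $\|\bm{v}^*\|_2 \leq 1$), the bound $\opt^{1\rightarrow 2}_k \geq (A-B)^2/n$ concludes this case. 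In the complementary case $A < B$, the inequality $(t^*+B)^2 \leq A^2 + \Delta < B^2 + \Delta$ combined with $t^*B \geq 0$ gives $(t^*)^2 \leq \Delta$ directly, and the conclusion follows from $\opt^{1\rightarrow 2}_k \geq 0$ whenever the feasible set of \eqref{eq:emp-adv-spca-2} is nonempty.
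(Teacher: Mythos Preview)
Your proposal is correct and follows essentially the same route as the paper: introduce $t$ and $\bm{y}$ to rewrite \eqref{eq:emp-adv-spca-2} as $\max t^2/n$ subject to $n\bm{v}^\top\widehat{\bm{\Sigma}}\bm{v} \geq (t+\rho\bm{y})^2$, then replace $n\bm{v}^\top\widehat{\bm{\Sigma}}\bm{v}$ by the PLU surrogate $n\sum_j \lambda_j \bm{\xi}_j$ and use the sandwich $\bm{g}_j^2 \leq \bm{\xi}_j \leq \bm{g}_j^2 + 1/(4N^2)$.

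The one place you go beyond the paper is the upper-bound direction. The paper simply asserts that the PLU sandwich ``ensures'' $\ub_k^{1\rightarrow 2} \leq \opt_k^{1\rightarrow 2} + \frac{1}{4N^2}\sum_j \lambda_j$, whereas you correctly identify that passing from $(t^*+\rho\|\bm{v}^*\|_1)^2 \leq \|\bm{X}\bm{v}^*\|_2^2 + \Delta$ to an additive bound on $(t^*)^2$ is not automatic and supply a clean two-case argument. Your Case~1 computation (equivalently, $(C-B)^2 - (A-B)^2 = C^2 - A^2 - 2B(C-A) \leq \Delta$ since $C \geq A$ and $B \geq 0$) and Case~2 computation are both valid, and the rescaling step $\bm{v}^*/\|\bm{v}^*\|_2 \in \mathcal{V}_k$ to certify $\opt_k^{1\rightarrow 2} \geq (A-B)^2/n$ is exactly what is needed. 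So your argument is the paper's argument with the missing justification filled in; nothing genuinely different in structure.
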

The proof of Theorem~\ref{thm:featurewise-spca-MIP} can be found in Appendix~\ref{app:featurewise-spca-MIP}. Based on the techniques used in Proposition~\ref{prop:samplewise-spca-MIP-r}, one can improve computational efficiency by reducing the number of SOS-II variables, as detailed by  Proposition~\ref{prop:featurewise-spca-MIP-r} in Appendix~\ref{app:featurewise-spca-MIP-r}. 

\section{Statistical Results} \label{sec:stat-results}

This section presents the theoretical properties of optimal solutions from the proposed mixed-integer convex programs under the statistical sample-generating model in Assumption~\ref{assump:sample-model}.  
 
\begin{assumption}\label{assump:sample-model}
Every sample $\bm{x}^i$ is i.i.d. generated from an underlying Gaussian distribution $\mathcal{N}(\bm{0}, \bm{\Sigma})$. Here, the covariance matrix $\bm{\Sigma}$ is positive definite with eigengap $\lambda := \lambda^k_1 - \lambda^k_2 > 0$, where $\lambda^k_1, \lambda^k_2$ are the first and second largest $k$-sparse eigenvalue of $\bm{\Sigma}$, respectively.
\end{assumption}

\subsection{Statistical results for sample-wise perturbation} \label{sec:stat-samplewise}

Based on Assumption~\ref{assump:sample-model}, the population version of Formulation~\eqref{eq:samplewise-spca-1} becomes
\begin{align}
    \max_{\bm{v} \in \mathcal{V}_k} ~ & ~ \mathbb{E}_{\bm{x} \sim \mathcal{N}(\bm{0}, \bm{\Sigma})} [\ell_{\rho}(\bm{v}^{\top} \bm{x})] ~ , \label{eq:pop-adv-spca-1}
\end{align}
whose optimal solution is characterized by the following theorem.
\begin{theorem} \label{thm:pop-SPCA-NOAP-1}
Under Assumption~\ref{assump:sample-model}, Formulation~\eqref{eq:pop-adv-spca-1} and the population version of sparse PCA have the same set of optimal solutions i.e., \[\argmax_{\bm{v} \in \mathcal{V}_k} \mathbb{E}_{\bm{x} \sim \mathcal{N}(\bm{0}, \bm{\Sigma})} [\ell_{\rho}(\bm{v}^{\top} \bm{x})] ~=~ \argmax_{\bm{v} \in \mathcal{V}_k} \bm{v}^{\top} \bm{\Sigma} \bm{v}.\]
\end{theorem}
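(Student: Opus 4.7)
My plan is to reduce the claim to a one-dimensional monotonicity statement about Gaussian moments. Fix any $\bm{v} \in \mathcal{V}_k$; since $\|\bm{v}\|_2 = 1$, the scalar $\bm{v}^\top \bm{x}$ is distributed as $\mathcal{N}(0, \sigma^2)$ with $\sigma^2 := \bm{v}^\top \bm{\Sigma} \bm{v}$, and by positive definiteness of $\bm{\Sigma}$ (Assumption~\ref{assump:sample-model}) we have $\sigma > 0$. Thus the objective $\mathbb{E}_{\bm{x}}[\ell_\rho(\bm{v}^\top \bm{x})]$ depends on $\bm{v}$ only through $\sigma$. Writing $Z = \sigma W$ with $W \sim \mathcal{N}(0, 1)$ and using the homogeneity identity $\ell_\rho(\sigma w) = \sigma^2 \, \ell_{\rho/\sigma}(w)$, which I will verify case-by-case from the three-branch definition of $\ell_\rho$, I obtain
\[
    \mathbb{E}[\ell_\rho(\bm{v}^\top \bm{x})] \;=\; \sigma^2 \, g(\rho/\sigma), \qquad g(t) \;:=\; 2 \int_t^{\infty} (w - t)^2 \varphi(w)\,dw,
\]
where $\varphi$ denotes the standard normal density and the factor of $2$ comes from the symmetry of the two non-trivial branches of $\ell_\rho$.

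The crux is then to show that $F(\sigma) := \sigma^2 g(\rho/\sigma)$ is strictly increasing on $(0, \infty)$. To this end I will differentiate $g$ under the integral; the boundary contribution at $w = t$ vanishes because the integrand $(w - t)^2$ is zero there, giving $g'(t) = -4 \int_t^\infty (w - t)\varphi(w)\,dw < 0$ for every $t \geq 0$. Combined with the obvious positivity $g(t) > 0$, this makes both $\sigma \mapsto \sigma^2$ and $\sigma \mapsto g(\rho/\sigma)$ strictly positive and strictly increasing on $(0, \infty)$, so their product $F$ is strictly increasing there. (Alternatively one can write $F'(\sigma) = 2\sigma g(\rho/\sigma) - \rho g'(\rho/\sigma)$ and note both terms are positive.)

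With $F$ strictly monotone in $\sigma > 0$ and the map $\bm{v} \mapsto \sqrt{\bm{v}^\top \bm{\Sigma} \bm{v}}$ strictly increasing in $\bm{v}^\top \bm{\Sigma} \bm{v}$, the composite objective $\bm{v} \mapsto F\bigl(\sqrt{\bm{v}^\top \bm{\Sigma}\bm{v}}\bigr)$ is a strictly increasing function of $\bm{v}^\top \bm{\Sigma} \bm{v}$ on $\mathcal{V}_k$. Hence the two constrained maximization problems share the same argmax set, which is the claim of Theorem~\ref{thm:pop-SPCA-NOAP-1}.

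The main obstacle is isolating the correct monotonicity statement and verifying it cleanly; once the scaling identity $\ell_\rho(\sigma w) = \sigma^2 \ell_{\rho/\sigma}(w)$ is in hand, the remaining steps are a short differentiation-under-the-integral calculation. No concentration or sample-level arguments are needed because the claim is purely about the population objective under Gaussianity.
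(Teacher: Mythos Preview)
Your proof is correct and follows the same high-level strategy as the paper---reduce to a one-dimensional monotonicity claim in $\sigma_v = \sqrt{\bm{v}^\top \bm{\Sigma}\bm{v}}$---but your execution is different and cleaner. The paper keeps $\rho$ fixed and differentiates $h_\rho(\sigma_v) = \int_\rho^\infty (x-\rho)^2 \tfrac{1}{\sigma_v\sqrt{2\pi}}\exp(-x^2/2\sigma_v^2)\,dx$ directly in $\sigma_v$; this produces two integrals that the paper evaluates in closed form via the complementary error function in order to compare them. Your homogeneity substitution $\ell_\rho(\sigma w) = \sigma^2 \ell_{\rho/\sigma}(w)$ factors the $\sigma$-dependence out as $F(\sigma)=\sigma^2 g(\rho/\sigma)$, so the only calculus needed is the sign of $g'(t) = -4\int_t^\infty (w-t)\varphi(w)\,dw$, which is immediate. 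Your route therefore avoids the explicit $\text{erfc}$ computation entirely and makes the strict monotonicity transparent; the paper's route, by contrast, yields explicit formulas for the derivative that could in principle be reused for quantitative statements, though the paper does not exploit this.
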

The proof of Theorem~\ref{thm:pop-SPCA-NOAP-1} is presented in Appendix~\ref{app:pop-SPCA-NOAP-1}. Theorem~\ref{thm:pop-SPCA-NOAP-1} implies that when samples are i.i.d. generated from Gaussian distributions, as the number of samples increases to infinity, the sample-wise perturbed sparse PCA reduces to the vanilla sparse PCA. In other words, sample-wise adversarial perturbation does not influence the sparse principal component in the population version. On the other hand, for the empirical version \eqref{eq:samplewise-spca-1}, as the adversarial perturbation parameter $\rho$ increases, the number of terms $\ell_{\rho}^=(\langle \bm{x}^i, \bm{v} \rangle)$ with non-zero value reduces exponentially with respect to $\rho$, implying a higher sample complexity to ensure vector recovery than that for vanilla sparse PCA. 

\begin{remark}
Theorem~\ref{thm:pop-SPCA-NOAP-1} can be generalized in the following way. Suppose all samples $\bm{x}_1, \ldots, \bm{x}_n$ are i.i.d. drawn from some distribution $\mathcal{D}$ with the properties: (i) $\mathcal{D}$ has a mean of $\bm{0}_d$ and covariance of $\bm{\Sigma} \succeq  \bm{0}_{d \times d}$; (ii) given a unit vector $\bm{v}$, the density function $f_{\mathcal{D}}$ of random variable $x_v := \bm{v}^{\top} \bm{x}_i$ is a univariate function that only depends on its standard derivation $\sigma_v := \sqrt{\bm{v}^{\top}\bm{\Sigma} \bm{v}}$, that is to say, its density function can be written as $f_{\mathcal{D}}(\cdot; \sigma_v)$; and (iii) the truncated variance function $h_{\rho}(\sigma_v) := \int_{x_v \geq \rho} (x_v^2 - \rho)^2 f_{\mathcal{D}}(x_v; \sigma_v) \mathrm{d} x_v$ is monotonically non-decreasing, i.e., $h_{\rho}(\sigma_v') \geq h_{\rho}(\sigma_v)$ if $\sigma_v' \geq \sigma_v$, then optimizing the population version of robust sparse PCA is equivalent to optimizing the classical sparse PCA problem for any fixed positive $\rho$, i.e., $\argmax_{\bm{v} \in \mathcal{V}_k} ~ h_{\rho} (\sqrt{\bm{v}^{\top} \bm{\Sigma} \bm{v}}) = \argmax_{\bm{v} \in \mathcal{V}_k} \sqrt{\bm{v}^{\top} \bm{\Sigma} \bm{v}} =  \argmax_{\bm{v} \in \mathcal{V}_k} \bm{v}^{\top} \bm{\Sigma} \bm{v}$. 
\end{remark}

\begin{remark}
We would like to point out that such an equivalent condition does not hold when the sample set is arbitrarily generated. For example, given samples  $\bm{x}_1 = (1 ~|~ 0)^{\top}$ and $\bm{x}_2 = (1/2 ~|~ \sqrt{3}/2)^{\top}$, the corresponding empirical covariance matrix is 
$$\bm{\Sigma} := \begin{pmatrix}
        5/8 & \sqrt{3}/8 \\
        \sqrt{3}/8 & 3/8 
    \end{pmatrix} = \frac{1}{2} \left[
    \begin{pmatrix}
        1 \\
        0 \\
    \end{pmatrix} \begin{pmatrix}
        1 & 0\\
    \end{pmatrix} + 
    \begin{pmatrix}
        1/2 \\
        \sqrt{3}/2\\
    \end{pmatrix} \begin{pmatrix}
        1/2 & \sqrt{3}/2 \\
    \end{pmatrix}
    \right].$$
Suppose $d = k = 2$ and $\rho = \frac{\sqrt{3}}{2} + \epsilon$ for a small $\epsilon > 0$, we have $ \argmax_{\bm{v} \in \mathcal{V}_k} \bm{v}^{\top} \bm{\Sigma} \bm{v}=(\sqrt{3}/2, 1/2)^{\top}$ while $\argmax_{\bm{v} \in \mathcal{V}_k} ~ \text{\eqref{eq:samplewise-spca-1}} = \{(1,0)^{\top}, (1/2, \sqrt{3}/2)^{\top}\} $. They have distinct optimal solutions and optimal values.  
\end{remark}
As a direct corollary from Theorem~\ref{thm:pop-SPCA-NOAP-1}, finding the optimal solution of Formulation~\eqref{eq:pop-adv-spca-1} can be reduced to solving the vanilla sparse PCA problem $\max_{\bm{v} \in \mathcal{V}_k} ~ \bm{v}^{\top} \bm{\Sigma} \bm{v}$. Therefore, any algorithms for the vanilla sparse PCA (i.e., truncated power method \citep{yuan2013truncated}, diagonal thresholding method \citep{amini2008high}, covariance thresholding method \citep{deshp2016sparse}, SDP-based methods \citep{amini2008high}) could be used for solving Formulation~\eqref{eq:pop-adv-spca-1}.

\subsection{Statistical results for feature-wise perturbation} \label{sec:stat-featurewise} 

When the perturbation is feature-wise, under Assumption~\ref{assump:sample-model}, we show that the population version of Formulation~\eqref{eq:emp-adv-spca-2} satisfies the following property. 

\begin{proposition} \label{prop:featurewise-spca-pop}
Under Assumption~\ref{assump:sample-model}, given any $\rho > 0$, the population version of Formulation~\eqref{eq:emp-adv-spca-2} is $\max_{\bm{v} \in \mathcal{F}(\rho / \sqrt{n} )} ~ ( \sqrt{\bm{v}^{\top} \bm{\Sigma} \bm{v} } - \frac{\rho}{\sqrt{n}} \|\bm{v}\|_1 )^2 - 2  \frac{\rho}{\sqrt{n}} \|\bm{v}\|_1 \sqrt{\bm{v}^{\top} \bm{\Sigma} \bm{v}} \cdot O ( 1/n )$ with feasible set $\mathcal{F}(\rho / \sqrt{n} ) := \{ \bm{v} \in \mathcal{V}_s^d ~\left|~ \sqrt{\bm{v}^{\top} \bm{\Sigma} \bm{v}} \cdot \left( 1 + O\left( \frac{1}{n} \right) \right) \geq \frac{\rho}{\sqrt{n}} \|\bm{v}\|_1 \right. \}$. 
\end{proposition}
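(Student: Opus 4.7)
The plan is to derive the population form by taking expectations of the empirical objective in Formulation~\eqref{eq:emp-adv-spca-2} over the Gaussian sample-generating model in Assumption~\ref{assump:sample-model}, then identify the stated expression by completing a square and absorbing the residual into a lower-order term.

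First I would note that for any fixed $\bm{v}$, since $\bm{x}^i \sim \mathcal{N}(\bm{0}, \bm{\Sigma})$ i.i.d., the scalars $\bm{x}^{i\top}\bm{v}$ are i.i.d. $\mathcal{N}(0, \bm{v}^\top \bm{\Sigma}\bm{v})$, so $\|\bm{X}\bm{v}\|_2^2 = (\bm{v}^\top\bm{\Sigma}\bm{v}) \cdot Z$ where $Z \sim \chi^2_n$. This gives immediately $\mathbb{E}\bigl[\tfrac{1}{n}\|\bm{X}\bm{v}\|_2^2\bigr] = \bm{v}^\top\bm{\Sigma}\bm{v}$ and $\mathbb{E}[\|\bm{X}\bm{v}\|_2] = \sqrt{\bm{v}^\top\bm{\Sigma}\bm{v}}\cdot \mathbb{E}[\sqrt{Z}]$. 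The main tool I would then invoke is the classical moment identity $\mathbb{E}[\sqrt{Z}] = \sqrt{2}\,\Gamma\!\left(\tfrac{n+1}{2}\right)/\Gamma\!\left(\tfrac{n}{2}\right)$, combined with Stirling's expansion to get $\mathbb{E}[\sqrt{Z}] = \sqrt{n}\,(1 + O(1/n))$.

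Next I would expand the squared objective as
\begin{equation*}
\mathbb{E}\!\left[\tfrac{1}{n}\bigl(\|\bm{X}\bm{v}\|_2 - \rho\|\bm{v}\|_1\bigr)^{\!2}\right] = \bm{v}^\top\bm{\Sigma}\bm{v} \;-\; \tfrac{2\rho\|\bm{v}\|_1}{n}\,\mathbb{E}[\|\bm{X}\bm{v}\|_2] \;+\; \tfrac{\rho^2\|\bm{v}\|_1^2}{n},
\end{equation*}
substitute the expansion for $\mathbb{E}[\|\bm{X}\bm{v}\|_2]$, and group terms. The leading contribution yields the perfect square $\bigl(\sqrt{\bm{v}^\top\bm{\Sigma}\bm{v}} - \tfrac{\rho}{\sqrt n}\|\bm{v}\|_1\bigr)^{\!2}$, while the $O(1/n)$ correction inside $\mathbb{E}[\sqrt{Z}]$ contributes precisely the residual $-2 \tfrac{\rho}{\sqrt n}\|\bm{v}\|_1\sqrt{\bm{v}^\top\bm{\Sigma}\bm{v}} \cdot O(1/n)$ that appears in the statement.

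For the constraint $\|\bm{X}\bm{v}\|_2 \geq \rho\|\bm{v}\|_1$, I would pass to its expectation and apply the same expansion: $\sqrt{\bm{v}^\top\bm{\Sigma}\bm{v}}\cdot\sqrt{n}(1+O(1/n)) \geq \rho\|\bm{v}\|_1$, which rearranges to the stated feasibility condition defining $\mathcal{F}(\rho/\sqrt n)$ after dividing by $\sqrt n$. The expected main obstacle is keeping the error order honest when moving between $\mathbb{E}[\|\bm{X}\bm{v}\|_2]$ and $\sqrt{\mathbb{E}[\|\bm{X}\bm{v}\|_2^2]}$: because $\sqrt{\cdot}$ is concave, the two differ by an $O(1/\sqrt n)$ relative gap in general, and one must rely on the sharp Gamma-function computation above (rather than a naive Jensen bound) to obtain the quoted $O(1/n)$ rate. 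The sparsity constraint $\bm{v}\in \mathcal{V}_k$ is preserved unchanged throughout, since none of the expectation computations depend on the support pattern.
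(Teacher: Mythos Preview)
Your proposal is correct and follows essentially the same route as the paper: reduce $\|\bm{X}\bm{v}\|_2$ to $\sigma_v\sqrt{Z}$ with $Z\sim\chi^2_n$, invoke the exact identity $\mathbb{E}[\sqrt{Z}]=\sqrt{2}\,\Gamma(\tfrac{n+1}{2})/\Gamma(\tfrac{n}{2})=\sqrt{n}(1+O(1/n))$, expand the square in the objective, and apply the same expansion to the expected constraint. Your explicit remark that the sharp Gamma-function computation (not Jensen) is needed to secure the $O(1/n)$ rate is a nice addition the paper leaves implicit.
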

The proof of Proposition~\ref{prop:featurewise-spca-pop} is given in Appendix~\ref{app:featurewise-spca-pop}. As hinted from existing statistical results \citep{novikov2023sparse}, setting $\rho = O(\sqrt{n / k})$ leads to, on average, an almost constant upper bound on the adversarial perturbation for every component of each sample. Under such a setting, the last term $2 \frac{\rho}{\sqrt{n}} \|\bm{v}\|_1 \sqrt{\bm{v}^{\top} \bm{\Sigma} \bm{v}} \cdot O \left( 1/n \right) \approx O(1/n)$ in objective is significantly smaller than the first quadratic term, as the number of samples $n$ increases. By ignoring the last term, we consider the following problem~\eqref{eq:pop-featurewise-spca} as a tight upper approximation for the original population version of Formulation~\eqref{eq:emp-adv-spca-2}, 
\begin{align}
    \widehat{\bm{v}} := & ~ \argmax_{\bm{v} \in \mathcal{V}_k} ~ ( \sqrt{\bm{v}^{\top} \bm{\Sigma} \bm{v}} -  \frac{\rho}{\sqrt{n}} \|\bm{v}\|_1 )^2 ~~\text{s.t.}~~ \sqrt{\bm{v}^{\top} \bm{\Sigma} \bm{v}} \geq \frac{\rho}{\sqrt{n}} \|\bm{v}\|_1, \label{eq:pop-featurewise-spca}
\end{align}
which is further equivalent to $\argmax_{\bm{v} \in \mathcal{V}_k} \sqrt{\bm{v}^{\top} \bm{\Sigma} \bm{v}} -  \frac{\rho}{\sqrt{n}} \|\bm{v}\|_1$, where such a equivalence holds when $\left\{ \bm{v} \in \mathcal{V}_k ~|~ \sqrt{\bm{v}^{\top} \bm{\Sigma} \bm{v}} \geq \frac{\rho}{\sqrt{n}} \|\bm{v}\|_1 \right\} \neq \emptyset$. 

\paragraph{Comparison with existing statistical results.} We begin with an additional assumption needed to establish further statistical results.  

\begin{assumption}\label{assump:spiked-Wishart-model}
Suppose Assumption~\ref{assump:sample-model} holds, and its covariance matrix $\bm{\Sigma}$ is of the following spiked format, i.e.,  $\bm{\Sigma} = \lambda \bm{v}_* \bm{v}_*^{\top} + \bm{I}_d$, where the eigengap $\lambda > 0$ and ground truth $\bm{v}_*$ is a unit $\ell_2$-norm vector with support set $S$ of size $k$.
\end{assumption}
Based on Assumption~\ref{assump:spiked-Wishart-model}, an existing result (Theorem 4.1 in \citet{novikov2023sparse}, see Appendix~\ref{app:vector-recovery} for a formal statement) ensures that: there exist algorithms with output a unit vector $\widehat{\bm{v}}$ that recovers the ground truth $\bm{v}_*$, i.e., $|\langle \widehat{\bm{v}}, \bm{v}_*\rangle| \geq 1 - \delta$, with high probability, while samples are corrupted under relatively small (``oblivious'') adversarial perturbations $\rho \leq O(\delta^6) \cdot \min\{\sqrt{\lambda}, \lambda\} \sqrt{n / k}$ and some specific parameter regime.

\begin{theorem}\label{thm:vector-recovery}
Suppose Assumption~\ref{assump:spiked-Wishart-model} holds, for any $\rho \leq c \cdot \delta \lambda \sqrt{\frac{n}{k}}$ with $c$ being a positive constant, the optimal solution $\widehat{\bm{v}} := \argmax_{\bm{v} \in \mathcal{V}_k} ~ \sqrt{\bm{v}^{\top} \bm{\Sigma} \bm{v} } - \frac{\rho}{\sqrt{n}} \|\bm{v}\|_1$ to Formulation~\eqref{eq:pop-featurewise-spca} ensures $|\langle \widehat{\bm{v}}, \bm{v}_* \rangle | \geq 1 - \delta$, where $\bm{v}_*$ denotes the optimal solution of the sparse PCA problem, i.e., $\bm{v}_* := \argmax_{\bm{v} \in \mathcal{V}_k} \bm{v}^{\top} \bm{\Sigma} \bm{v}$. 
\end{theorem}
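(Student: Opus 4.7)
The plan is to use $\bm{v}_*$ itself as a test vector and exploit the optimality of $\widehat{\bm{v}}$ for the objective $g(\bm{v}) := \sqrt{\bm{v}^\top \bm{\Sigma} \bm{v}} - (\rho/\sqrt{n})\|\bm{v}\|_1$. Under Assumption~\ref{assump:spiked-Wishart-model}, the spiked structure yields $\bm{v}^\top \bm{\Sigma} \bm{v} = \lambda \langle \bm{v}, \bm{v}_*\rangle^2 + 1$ for every unit vector $\bm{v}$. Since $g$ depends on $\bm{v}$ only through $\langle \bm{v}, \bm{v}_*\rangle^2$ and $\|\bm{v}\|_1$, both of which are invariant under $\bm{v} \mapsto -\bm{v}$, I may assume without loss of generality that $\langle \widehat{\bm{v}}, \bm{v}_*\rangle \geq 0$, and write $\alpha := \langle \widehat{\bm{v}}, \bm{v}_*\rangle^2 \in [0,1]$.

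Next, I would evaluate $g$ at the ground truth $\bm{v}_* \in \mathcal{V}_k$. Cauchy--Schwarz combined with $k$-sparsity and unit $\ell_2$-norm gives $\|\bm{v}_*\|_1 \leq \sqrt{k}$; for $\widehat{\bm{v}}$ the only bound I will need is the trivial $\|\widehat{\bm{v}}\|_1 \geq 0$. By optimality of $\widehat{\bm{v}}$, $g(\widehat{\bm{v}}) \geq g(\bm{v}_*)$, which after rearranging reads
\begin{equation*}
\sqrt{\lambda+1} - \sqrt{\lambda\alpha+1} \;\leq\; \frac{\rho}{\sqrt{n}}\bigl(\|\bm{v}_*\|_1 - \|\widehat{\bm{v}}\|_1\bigr) \;\leq\; \rho \sqrt{k/n}.
\end{equation*}

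Then, I would rationalize the left-hand side as $\lambda(1-\alpha)/\bigl[\sqrt{\lambda+1} + \sqrt{\lambda\alpha+1}\bigr]$. Using $\sqrt{\lambda\alpha+1} \leq \sqrt{\lambda+1}$, the denominator is at most $2\sqrt{\lambda+1}$, so
\begin{equation*}
1 - \alpha \;\leq\; \frac{2\rho}{\lambda}\sqrt{\frac{(\lambda+1)k}{n}}.
\end{equation*}
Substituting the assumed bound $\rho \leq c\delta\lambda\sqrt{n/k}$ yields $1 - \alpha \leq 2c\delta\sqrt{\lambda+1}$. Choosing $c$ small enough (absorbing the $\sqrt{\lambda+1}$ factor) to ensure $1 - \alpha \leq \delta(2-\delta)$ gives $\alpha \geq (1-\delta)^2$, hence $|\langle \widehat{\bm{v}}, \bm{v}_*\rangle| = \sqrt{\alpha} \geq 1 - \delta$.

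The main obstacle is the bookkeeping of constants: the raw inequality carries a $\sqrt{\lambda+1}$ factor that the theorem's statement absorbs into the universal constant $c$. A sharper denominator bound (e.g., using $\sqrt{\lambda\alpha+1} \geq 1$ to replace $2\sqrt{\lambda+1}$ by $\sqrt{\lambda+1} + 1$), or splitting the analysis into the cases $\lambda \leq 1$ and $\lambda \geq 1$, would produce a $\min\{\sqrt{\lambda},\lambda\}$-type scaling analogous to existing results such as Novikov et al., making transparent the regime in which Theorem~\ref{thm:vector-recovery} strictly relaxes the prior assumption on $\rho$.
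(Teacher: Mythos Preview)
Your argument follows the same core strategy as the paper: exploit optimality via $g(\widehat{\bm v})\ge g(\bm v_*)$, rationalize $\sqrt{1+\lambda}-\sqrt{1+\lambda\alpha}$, and convert into a bound on $\rho$. The one substantive difference is how the term $\|\bm v_*\|_1-\|\widehat{\bm v}\|_1$ is controlled. The paper first proves a support-containment lemma ($\supp(\widehat{\bm v})\subseteq S_*$) and then bounds $\|\bm v_*-\widehat{\bm v}\|_1\le\sqrt{k}\,\|\bm v_*-\widehat{\bm v}\|_2$; you instead use the cruder but entirely sufficient $\|\bm v_*\|_1-\|\widehat{\bm v}\|_1\le\|\bm v_*\|_1\le\sqrt k$, which bypasses the lemma and is in fact no worse in the worst case. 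This is a genuine simplification for the purposes of Theorem~\ref{thm:vector-recovery}, though the support lemma is used more essentially later in the paper (Theorem~\ref{thm:strong-weak-signal}).

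On the constant: you are right to flag the residual $\sqrt{\lambda+1}$ factor. The paper's write-up replaces $\sqrt{1+\lambda}+\sqrt{1+\lambda(1-\delta)^2}$ by $\sqrt 6$ in the final display, which is not valid for general $\lambda$, so the same $\lambda$-dependence is present there too, just not acknowledged. Your suggested remedy of splitting into $\lambda\le 1$ and $\lambda\ge 1$ (yielding a $\min\{\sqrt\lambda,\lambda\}$ scaling, as in the comparison with \citet{novikov2023sparse}) is the honest way to obtain a genuinely universal constant; as stated, both arguments implicitly allow $c$ to depend on $\lambda$.
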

The proof of Theorem~\ref{thm:vector-recovery} is presented in Appendix~\ref{app:vector-recovery}. Based on a weaker condition than the existing result (Theorem 4.1 in \citet{novikov2023sparse}), Theorem~\ref{thm:vector-recovery} shows that the optimal solution obtained from Formulation~\eqref{eq:pop-featurewise-spca} recovers the ground truth under a larger upper bound of adversarial perturbation parameter. 

\paragraph{Strong and weak signal under non-oblivious perturbation.} This part studies the properties of the robust sparse principal component under feature-wise adversarial perturbation as the parameter $\rho$ increases.   

\begin{assumption} \label{assump:strong-weak}
Based on Assumption~\ref{assump:spiked-Wishart-model}, suppose the support set $S_*$ for ground truth $\bm{v}_*$ can be partitioned into two non-overlap parts: a relatively strong signal part $S_1$ of size $k_1$ and a relatively weak signal part $S_2$ of size $k_2$ such that $[\bm{v}_*]_i = \sqrt{c / k_1}$ for $i \in S_1$ and $[\bm{v}_*]_{i} = \sqrt{(1 - v)/(k - k_1)}$ for $i \in S_2$ with $k_1 \ll k - k_1 \leq k$ and $c$ is some pre-determined constant in $(1/2,1)$. 
\end{assumption} 
For a given $\rho$, recall $\widehat{\bm{v}}$ the optimal solution of Formulation~\eqref{eq:pop-featurewise-spca}. We are poised to propose the main theorem of this part. 

\begin{theorem}\label{thm:strong-weak-signal}
Under Assumption~\ref{assump:strong-weak}, the behavior of $\widehat{\bm{v}}$ satisfies the following:

\noindent \textbf{1.}  \textbf{Recovery stage.} When $\rho \in [0, ~ O(\min\{\sqrt{\lambda}, \lambda\} \sqrt{ n / k_2})]$, $\widehat{\bm{v}}$ recovers the ground truth $\bm{v}_*$ defined in Theorem~\ref{thm:vector-recovery}. 

\noindent \textbf{2.} \textbf{Robust stage.} When $\rho \in (O(\min\{\sqrt{\lambda}, \lambda\} \sqrt{n / k_2}), ~ O(\min\{\sqrt{\lambda}, \lambda\} \sqrt{ n / k_1})]$, $\widehat{\bm{v}}$ recovers the strong signal part in $\bm{v}_*$ while eliminating the weak signal. 

\noindent \textbf{3.} \textbf{Overly perturbed stage.} When $\rho \in (O(\min\{\sqrt{\lambda}, \lambda\} \sqrt{ n / k_1}), ~ + \infty)$, adversarial perturbation is too large to recover enough information of the ground truth. 
\end{theorem}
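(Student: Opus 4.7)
The plan is to reduce the population optimization to a low-dimensional parametric problem using the block-uniform structure of $\bm{v}_*$ in Assumption~\ref{assump:strong-weak}, then track where the unique maximizer sits as $\rho$ grows. Throughout, since $\bm{\Sigma}=\lambda\bm{v}_*\bm{v}_*^{\top}+\bm{I}_d$, the population objective depends on $\bm{v}$ only through $\alpha:=\langle\bm{v},\bm{v}_*\rangle$ and $\|\bm{v}\|_1$, namely $\sqrt{\lambda\alpha^2+1}-(\rho/\sqrt{n})\|\bm{v}\|_1$. First I would perform a structural reduction: a sign-alignment argument allows restricting to $\widehat{\bm{v}}_i\ge 0$ on $S_*:=S_1\cup S_2$ and $\widehat{\bm{v}}_i=0$ outside $S_*$, since non-support mass raises $\|\bm{v}\|_1$ and steals $\ell_2$ budget without affecting $\alpha$. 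Within each block, uniformity of $\bm{v}_*$ on $S_j$ plus Cauchy--Schwarz implies equal magnitudes on any chosen sub-support are optimal. This parametrizes candidates by sub-support sizes $k_j'\le k_j$ and a mass split $A+B=1$ with $A,B\ge 0$, reducing the objective to $F(k_1',k_2',A,B)=\sqrt{\lambda(\sqrt{cAk_1'/k_1}+\sqrt{(1-c)Bk_2'/k_2})^2+1}-(\rho/\sqrt{n})(\sqrt{k_1'A}+\sqrt{k_2'B})$.

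Next I would argue the maximizer over $(k_1',k_2')$ is a corner (each block fully included or fully excluded), via an exchange lemma that trades two partially used entries in a block for one fully used entry while monitoring $\alpha$ and $\|\bm{v}\|_1$. This leaves three viable candidates: \emph{(c1)} $\bm{v}_*$ itself (both blocks fully used, $A=c$), with $F=\sqrt{\lambda+1}-(\rho/\sqrt{n})(\sqrt{ck_1}+\sqrt{(1-c)k_2})$; \emph{(c2)} $\bm{v}_{S_1}:=[\bm{v}_*]_{S_1}/\sqrt{c}$ (only the strong block, $k_2'=0$, $A=1$), with $F=\sqrt{\lambda c+1}-(\rho/\sqrt{n})\sqrt{k_1}$; and \emph{(c3)} a strict subset of $S_1$ of size $k_1'<k_1$ with $k_2'=0$, $A=1$, giving $F=\sqrt{\lambda ck_1'/k_1+1}-(\rho/\sqrt{n})\sqrt{k_1'}$.

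For the transition thresholds, comparing (c1) against (c2) and using $k_1\ll k_2$ together with the two-sided estimate $\sqrt{\lambda+1}-\sqrt{\lambda c+1}=\Theta(\sqrt{\lambda})$ for $\lambda\ge 1$ and $\Theta(\lambda)$ for $\lambda<1$ yields $\rho^{*}_{1\to 2}=\Theta(\min\{\sqrt{\lambda},\lambda\}\sqrt{n/k_2})$. For the second transition, a first-order analysis of $F$ in (c3) shows that the boundary $k_1'=k_1$ is optimal precisely when $\rho\le\rho^{*}_{2\to 3}:=\sqrt{n}\cdot\lambda c/\sqrt{k_1(\lambda c+1)}=\Theta(\min\{\sqrt{\lambda},\lambda\}\sqrt{n/k_1})$; above this threshold the optimizer moves strictly inside $(0,k_1)$ and $|\langle\widehat{\bm{v}},\bm{v}_*\rangle|=\sqrt{ck_1'/k_1}$ decreases continuously to zero. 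Matching to the three stages: Stage 1 follows from Theorem~\ref{thm:vector-recovery} with sparsity $k$, since $k_2\asymp k$ under Assumption~\ref{assump:strong-weak} places the regime $\rho\le O(\min\{\sqrt{\lambda},\lambda\}\sqrt{n/k_2})$ inside the guarantee; Stage 2 follows by applying the same recovery argument to the restricted problem on $S_1$ (sparsity $k_1$, signal $[\bm{v}_*]_{S_1}/\sqrt{c}$, effective eigengap $\lambda c$) once the corner analysis identifies $\bm{v}_{S_1}$ as the maximizer; Stage 3 follows directly from the $k_1'<k_1$ regime above $\rho^{*}_{2\to 3}$.

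The hardest part will be the corner characterization in the second step: showing that partial inclusion of a block is always strictly dominated by a pure-corner configuration, even though $\sqrt{\lambda\alpha^2+1}$ is convex in $\alpha$ while the $\ell_1$ penalty is linear in the entrywise magnitudes, so the tradeoff is not immediately monotone. Formalizing this requires an exchange step that converts any interior $(k_1',k_2')$ configuration into a corner one with weakly higher objective, paired with a local first-order check on $F$ to rule out non-corner critical points; once this reduction is in place, the threshold computations and invocation of Theorem~\ref{thm:vector-recovery} are routine algebra.
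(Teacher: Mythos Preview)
Your overall strategy---restrict to $S_*$, reduce the population objective to a low-dimensional parametric problem, isolate a handful of candidate configurations, and compare them as $\rho$ varies---is exactly what the paper does. The paper's parametrization is different and avoids your ``exchange lemma'' entirely: it writes $\widehat{\bm v}=(\widehat{\bm v}_{S_1},\widehat{\bm v}_{S_2},\bm 0)$ with $\|\widehat{\bm v}_{S_1}\|_2=\cos\theta$, $\|\widehat{\bm v}_{S_2}\|_2=\sin\theta$, and $x_j=\cos\theta_j$ the alignment of $\widehat{\bm v}_{S_j}$ with $[\bm v_*]_{S_j}$. Because $[\bm v_*]_{S_j}$ is uniform and entries are nonnegative, both the $\alpha$-contribution and the $\ell_1$-penalty from block $j$ are constant multiples of $\|\widehat{\bm v}_{S_j}\|_1$, so the objective depends on each block only through one scalar; the objective is then immediately convex in each $x_j$ separately, which forces the four corners $(x_1,x_2)\in\{0,1\}^2$ without any combinatorial exchange. (Note: this same observation shows your Cauchy--Schwarz justification for ``equal magnitudes are optimal'' is not the right reason---uniformity is not forced, rather the objective simply does not see the shape of $\widehat{\bm v}_{S_j}$ beyond its $\ell_1$ and $\ell_2$ norms.)

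There is a genuine gap in your threshold derivation. Your candidate (c1) fixes $A=c$, i.e., it is the single point $\bm v_*$; but the ``both blocks fully used'' configuration $(k_1',k_2')=(k_1,k_2)$ still carries the free mass-split parameter $A\in[0,1]$, and $A=c$ is \emph{not} in general the maximizer of $F(k_1,k_2,A,1-A)$ (the square-root term peaks at $A=c$ but the $\ell_1$ penalty pushes $A$ toward $\{0,1\}$, so the optimal split depends on $\rho$). Hence showing $F(\text{c2})\ge F(\bm v_*)$ does not establish $F(\text{c2})\ge\max_{A}F(k_1,k_2,A,1-A)$, which is what you need for the Stage~1/Stage~2 transition. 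The paper closes this by comparing $f(\theta,1,0)$ against $f(\theta,1,1)$ \emph{pointwise for every $\theta$} and extracting a $\theta$-uniform lower bound on $\rho$; that is what makes the $O(\min\{\sqrt{\lambda},\lambda\}\sqrt{n/k_2})$ threshold rigorous. A smaller inconsistency: your corner claim (``each block fully included or fully excluded'') is contradicted by your own candidate (c3) with $k_1'<k_1$; the correct statement is that convexity in each block's $\ell_1$-mass forces the optimum to the boundary of the feasible $(\|\widehat{\bm v}_{S_1}\|_1,\|\widehat{\bm v}_{S_2}\|_1)$ region, which is what the paper's convexity-in-$x_j$ argument captures.
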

Theorem~\ref{thm:strong-weak-signal} provides a quantitative description on how optimal solution $\widehat{\bm{v}}$ would change as the adversarial perturbation parameter increases. For people of independent interest, the proof of Theorem~\ref{thm:strong-weak-signal}, provided in Appendix~\ref{app:strong-weak-signal}, can generally be divided into the following three main steps. Step-1 provides a simplified reformulation of the objective function for Formulation~\eqref{eq:pop-featurewise-spca} under Assumption~\ref{assump:strong-weak}. Step-2 then demonstrates that such optimality of this formulation can be achieved at only four distinct points. Consequently, determining optimality reduces to comparing objective values among these four points. Based on this comparison, we derive conditions (lower and upper bounds) for $\rho$, ensuring that one of these points is guaranteed to be optimal. Finally, building upon Step-2, Step-3 establishes the connections between adversarial perturbation parameter $\rho$ and the behavior of $\widehat{\bm{v}}$ as presented in Theorem~\ref{thm:strong-weak-signal}.

\subsection{Numerical simulations} \label{sec:numerical}

In this section, we conduct numerical simulations on synthetic samples (generated from spiked Wishart model) to validate the theoretical findings presented in Section~\ref{sec:stat-featurewise} and demonstrate the tightness of proposed mixed-integer program formulations compared with dual baselines. Due to the space limit, we put the detailed descriptions of the implemented MIP-based approaches (i.e., \MIP ~and \MIPr) and baselines (i.e., \spca ~as dual baseline, \PPM ~as primal heuristic baseline), and information of hardware and software in the Appendix~\ref{app:baselines}, and additional numerical results in Appendix~\ref{app:additional-numerical}. 

\textbf{Sample matrix:} The synthetic samples are generated as follows. Every $d$-dimensional sample $\bm{x}^i$ for $i \in [n]$ is i.i.d. drawn from the spiked Wishart model with general $k$-sparse ground truth $\bm{v}_*^{\tt ks}$ or $k$-sparse ground truth with strong and weak signal $\bm{v}_*^{\tt sw}$ as presented in Assumptions~\ref{assump:spiked-Wishart-model} and \ref{assump:strong-weak}, respectively. We use $\bm{X} = (\bm{x}^1 ~|~ \cdots ~|~ \bm{x}^n)^{\top} \in \mathbb{R}^{n \times d}$ to denote the uncorrupted sample matrix.

\textbf{Performance measures:} We measure numerical performance by the following three metrics. \textbf{1.} \emph{Primal-dual gap}, defined as $\gap := (\ub - \lb) / \lb$, is used to measure the relative tightness of upper (dual) bounds for any given methods. In our numerical simulations, the upper bound is achieved from three upper (dual) bound methods, i.e., $\{\ub^{\MIP}, \ub^{\MIPr}, \ub^{\spca}\}$, and the lower bound $\lb$ denotes the best primal (lower) bound from the primal heuristics and feasible MIP solutions. \textbf{2.} \emph{Angle (cosine value) between true and computed solution}, defined as $\ang := |\langle \widehat{\bm{v}}, \bm{v}_* \rangle| / (\|\widehat{\bm{v}}\|_2 \|\bm{v}_*\|_2)$, is used to measure the similarity (in direction) between the given vector $\widehat{\bm{v}}$ and the ground truth, where $\widehat{\bm{v}} \in \{\bm{v}^{\MIP / \MIPr}, \bm{v}^{\spca}, \bm{v}^{\PPM}\}$ is a solution obtained from corresponding methods. Moreover, in strong and weak signal part, we use $\ang^{\strong} := |\langle [\widehat{\bm{v}}]_{S_1}, [\bm{v}_*]_{S_1} \rangle| / (\|[\widehat{\bm{v}}]_{S_1}\|_2 \|[\bm{v}_*]_{S_1}\|_2)$ and $\ang^{\weak} := |\langle [\widehat{\bm{v}}]_{S_2}, [\bm{v}_*]_{S_2} \rangle| / (\|[\widehat{\bm{v}}]_{S_2}\|_2 \|[\bm{v}_*]_{S_2}\|_2)$ to denote the cosine values for the strong and weak signal part between the ground truth and computed solutions, respectively. \textbf{3.} \emph{Support recovery rate}, defined as $\rate^{\supp} := |\widehat{S} \cap S_*| / k$, is used to measure the support recovery rate, where we use $ \widehat{S} := \supp(\widehat{\bm{v}})$ to denote the support of any given vector $\widehat{\bm{v}}$ and $S_*$ to denote the true support. Again, for the strong and weak signal parts, we use $\rate^{\strong} := |\widehat{S}_1 \cap S_1|/k_1$ and $\rate^{\weak} := |\widehat{S}_2 \cap S_2|/k_2$ to denote the relative support recovery rate to the strong and weak signal parts of the truth support set, respectively.

\textbf{Parameter setting:} In the simulation, we set sample dimension $d =100$, sparsity level $k=5$, eigenvalue gap $\lambda =3$. We generate $n \in \{100, 500\}$ i.i.d. samples from spiked Wishart model as described in Assumption~\ref{assump:spiked-Wishart-model} or Assumption~\ref{assump:strong-weak}. To cancel the influences from the number of samples $n$ and sparsity level $k$, the rescaled/normalized parameter $\bar{\rho}:= \rho \sqrt{k / n}$ for adversarial perturbation is set to be $\bar{\rho} \in \{0, 0.5, \ldots, 4.5\}$. For the implemented MIP method, we reduce the dimensionality of the original covariance matrix from $d$ to $\bar{d} \in \{15\}$. We set the number of splitting points $N \in \{3\}$, and the Gurobi time-limit equals to $1800\text{sec}$. Please refer to Appendix~\ref{app:baselines} for more parameter settings and implementation details of baselines.

\begin{figure}[!h]
\centering
\begin{subfigure}
\centering
\includegraphics[width=0.23\linewidth]{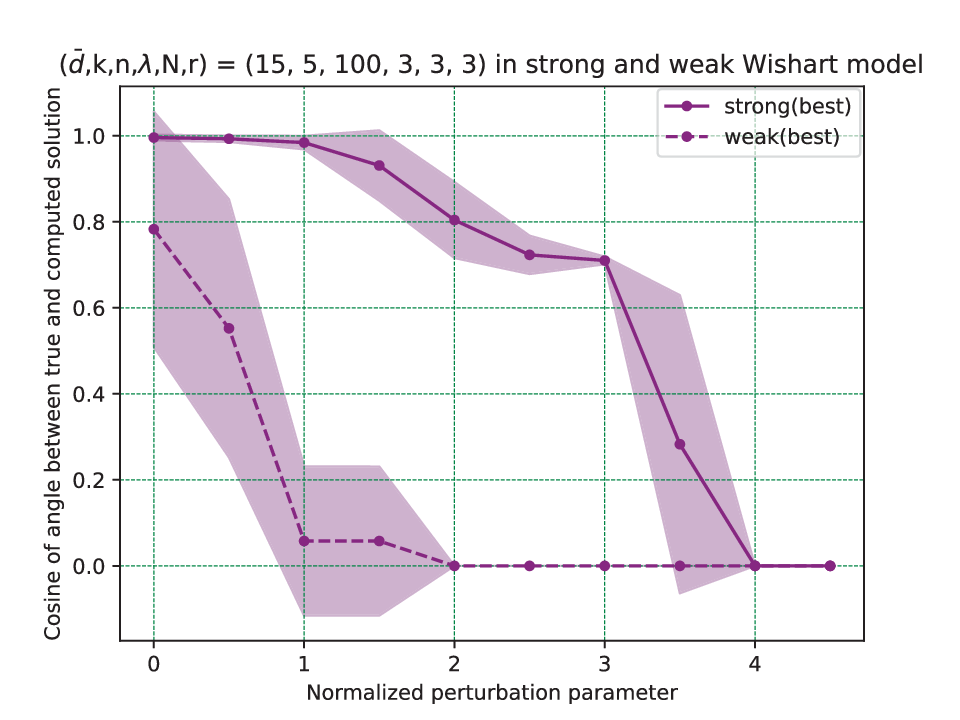}
\end{subfigure}
\hfill
\begin{subfigure}
\centering
\includegraphics[width=0.23\linewidth]
{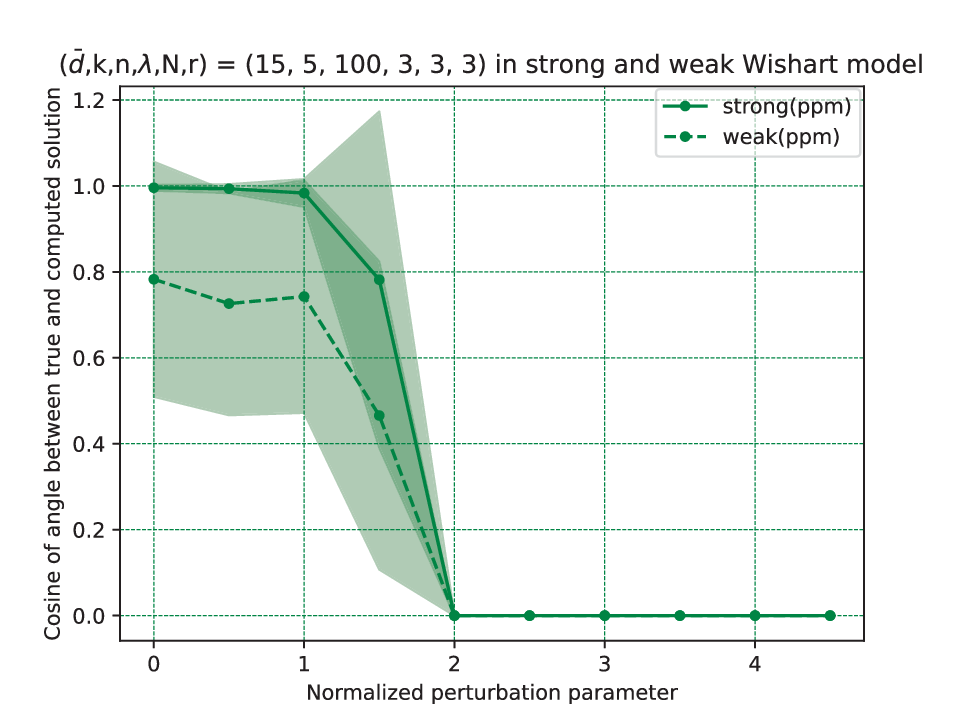}
\end{subfigure}
\hfill
\begin{subfigure}
\centering
\includegraphics[width=0.23\linewidth]
{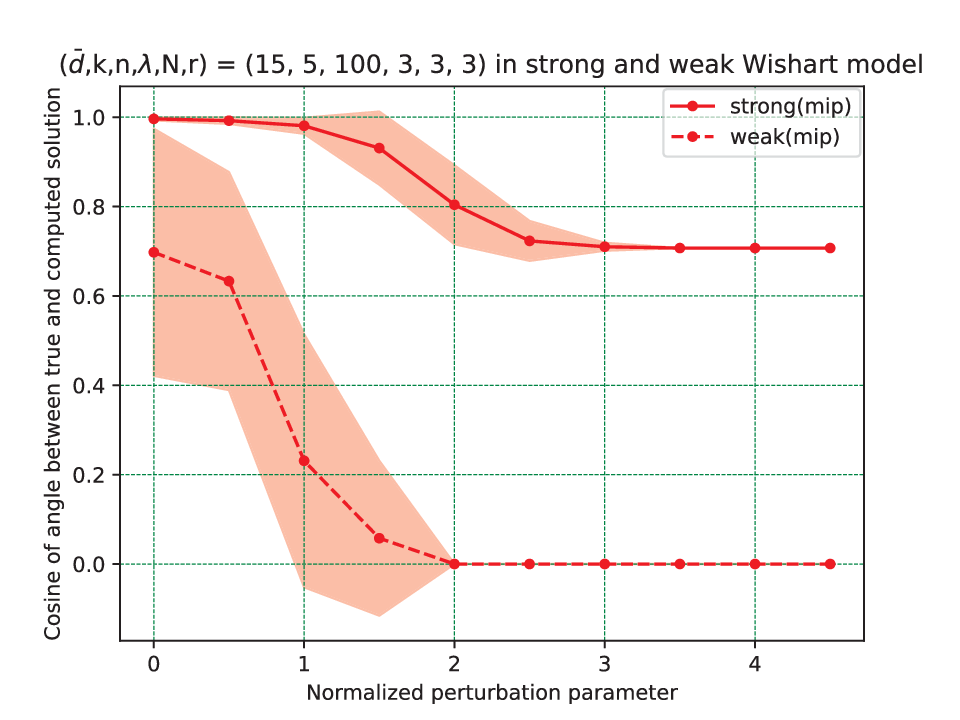}
\end{subfigure}
\hfill
\begin{subfigure}
\centering
\includegraphics[width=0.23\linewidth]
{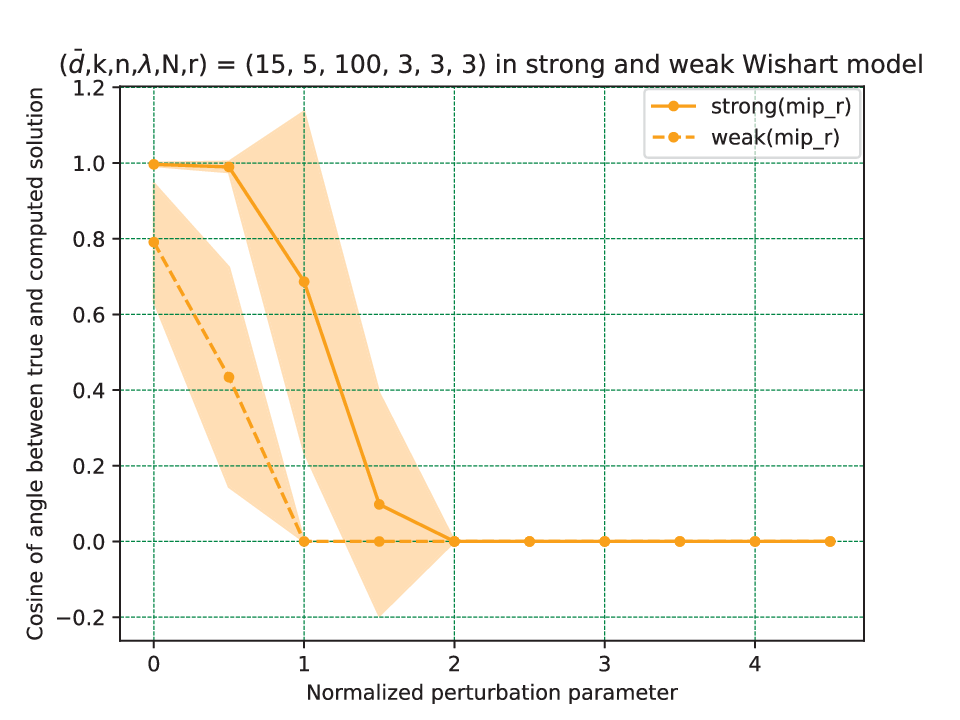}
\end{subfigure}

\centering
\begin{subfigure}
\centering
\includegraphics[width=0.23\linewidth]{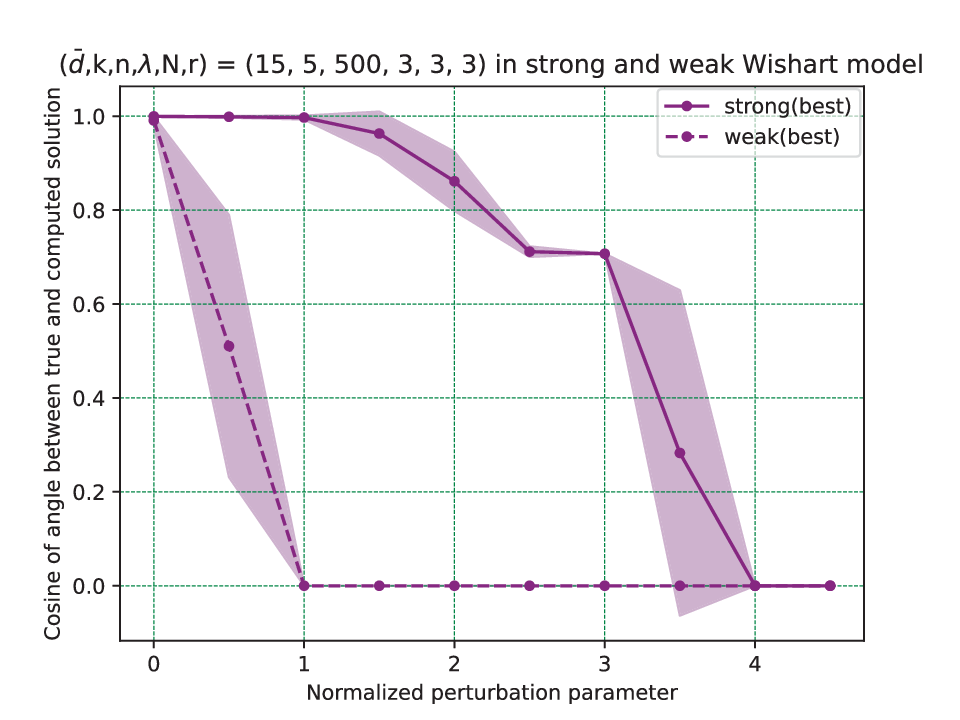}
\end{subfigure}
\hfill
\begin{subfigure}
\centering
\includegraphics[width=0.23\linewidth]
{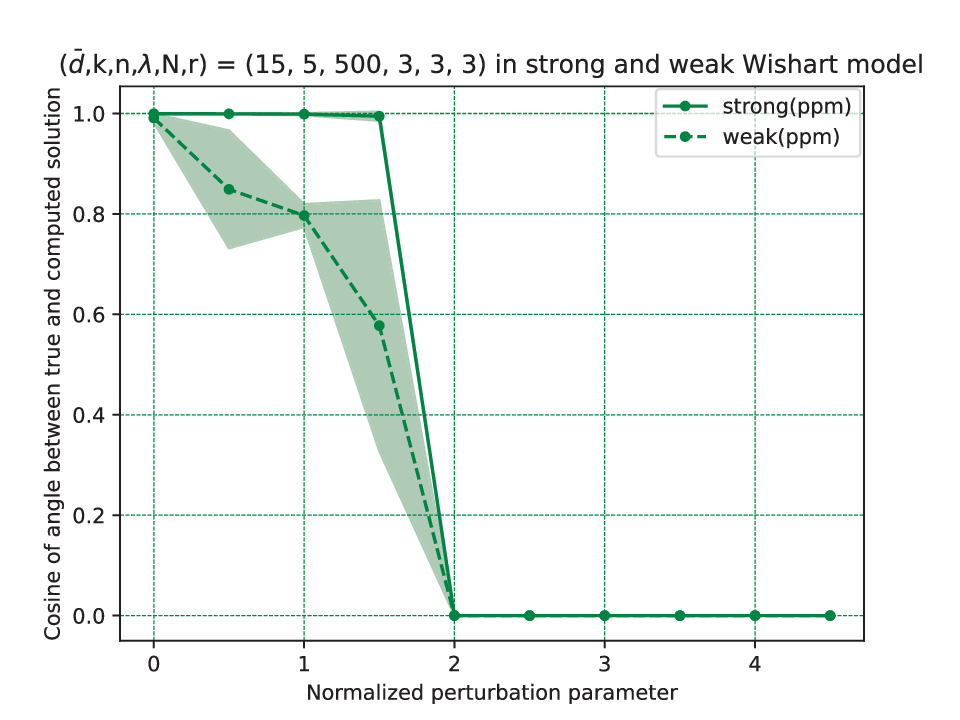}
\end{subfigure}
\hfill
\begin{subfigure}
\centering
\includegraphics[width=0.23\linewidth]
{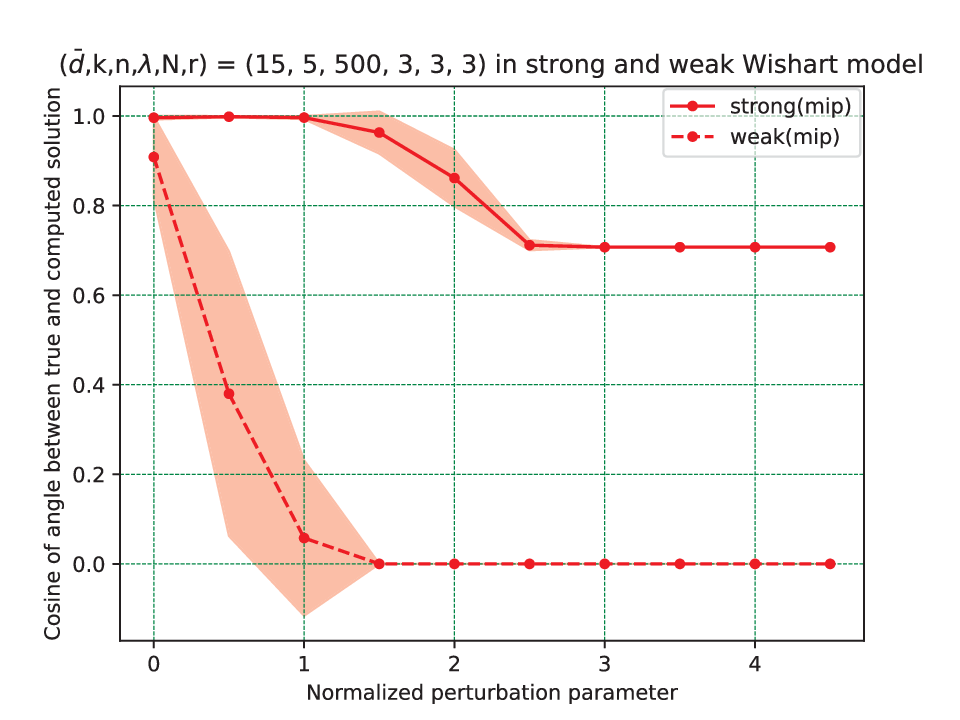}
\end{subfigure}
\hfill
\begin{subfigure}
\centering
\includegraphics[width=0.23\linewidth]
{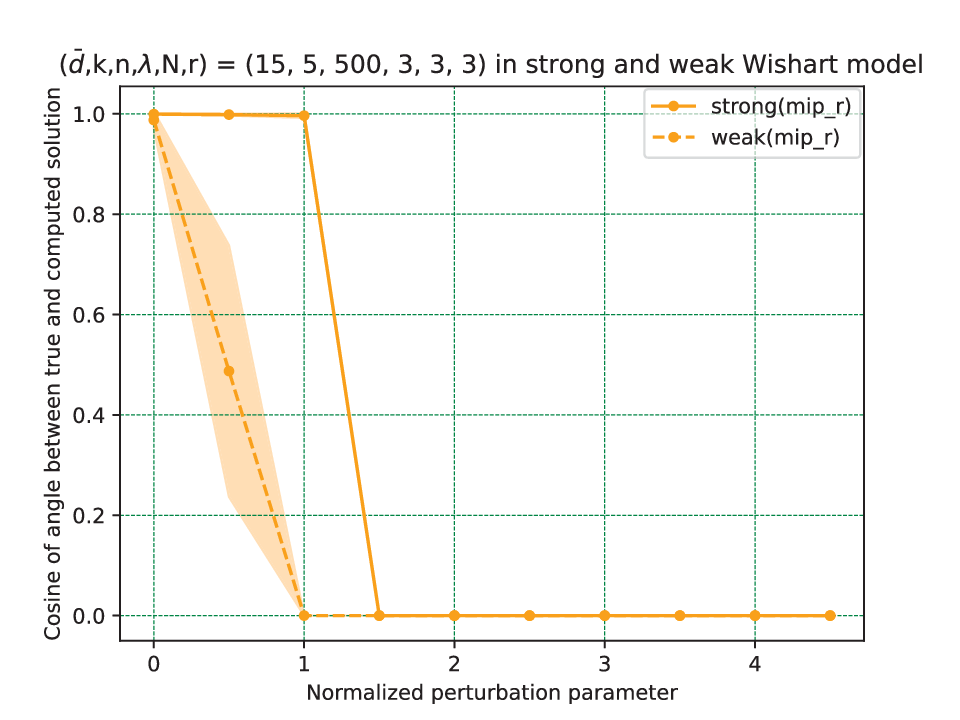}
\end{subfigure}
\caption{\textbf{Numerical results to validate Theorem~\ref{thm:strong-weak-signal}.} We plot the cosine values of angles between true and computed solution for strong and weak signal part v.s. the normalized perturbation parameter $\bar{\rho}$ under a particular parameter setting $(\bar{d},k,n,\lambda,N,r)$. The first and second row reports the numerical results with $n = 100$ and $n = 500$, respectively. For a specific normalized perturbation parameter $\bar{\rho}$, we generate 10 independent sample matrices based on the above parameter setting. Over these 10 independent trials, the solid curve in each panel corresponds to the averaged cosine values of the strong signal part; similarly, the dashed curve denotes the averaged cosine values of the weak signal part; the shaded parts represent the empirical standard deviations. The second/third/fourth column denotes the cosine values with respect to corresponding methods. The first column, denoted by ``{\tt best}'', represents the cosine values for the solutions with the best objective value of \eqref{eq:pop-featurewise-spca} among all three methods.
We can observe that: when $\bar{\rho}$ is relatively small, both strong and weak parts can be recovered with large cosine values, which follows the recovery stage described in Theorem~\ref{thm:strong-weak-signal}; while $\bar{\rho}$ increases, the weak signal part dramatically drops to zero and the strong signal part stays relatively higher cosine value as described in the robust stage in Theorem~\ref{thm:strong-weak-signal}; finally, if $\bar{\rho}$ grows beyond some threshold, adversarial perturbation are too large to capture any information of the ground truth. 
}
\label{figure:syn-data-2} 
\end{figure}

\textbf{Numerical results:} We compare the results under these three metrics under the following two parameter settings, i.e., $(\bar{d},k,n,\lambda,N,r) = (15,5,500,3,3,3)$ and $(\bar{d},k,n,\lambda,N,r) = (15,5,100,3,3,3)$. In general, Figure~\ref{figure:syn-data-2} validates the theoretical findings presented in Theorem~\ref{thm:strong-weak-signal}. Due to the space limit, we move Figure~\ref{figure:syn-data-1} to Appendix~\ref{app:additional-numerical}, which demonstrates the relative accuracy of computing tight upper bounds via our proposed MIP methods, \MIP \ and \MIPr. The code and data will be made publicly available on GitHub later. 

\section{Summary and Future Directions} \label{sec:summary}
This paper investigates the problem of sparse PCA-NOAP and proposes computationally efficient MIP approximations with theoretical worst-case guarantees for two critical variants of sparse PCA-NOAP. We then establish theoretical properties of the optimal solutions under non-oblivious adversarial perturbations when the samples are i.i.d. generated from a spiked Wishart model. Furthermore, numerical simulations are provided to validate our theoretical results and demonstrate the advantages of the proposed reformulations.

To conclude, we highlight several promising directions for future research. First, developing and analyzing polynomial-time algorithms for computing feasible solutions to sparse PCA-NOAP with guarantees is an essential next step. Second, exploring continuous convex relaxations for sparse PCA-NOAP with provable approximation ratios presents another promising avenue. Third, extending our results to generalized eigenvalue problems with adversarial perturbations would broaden the applicability of our approach. We anticipate that our work on sparse PCA-NOAP will inspire further advancements in PCA-related research, with practical and theoretical benefits for the field.

% \begin{credits}

% \end{credits}
%
% ---- Bibliography ----
%
% BibTeX users should specify bibliography style 'splncs04'.
% References will then be sorted and formatted in the correct style.
%
\bibliographystyle{abbrvnat}
\bibliography{reference}

\newpage
\appendix
\section{Proofs in Section~\ref{sec:SPCA-NOAP-refor}}  

\subsection{Proof of Proposition~\ref{prop:inner-min-refor}} \label{app:inner-min-refor}

\textbf{Recall the Proposition~\ref{prop:inner-min-refor}:} For a fixed \& feasible $\bm{v} \in \mathcal{V}_k$, the inner minimization problem satisfies 
\begin{align*}
    \min_{\|\bm{E}\| \leq \rho} ~ \frac{1}{n} \|\bm{X} \bm{v} + \bm{E}\bm{v}\|_2^2 ~~ = ~~ \frac{1}{n} \| \bm{X}\bm{v} - \proj_{\mathcal{F}(\bm{v}; \|\cdot\|, \rho)} (\bm{X} \bm{v}) \|_2^2 ~~, 
\end{align*} 
where set $\mathcal{F}(\bm{v}; \|\cdot\|, \rho) := \left\{ \bm{E}\bm{v} \in \mathbb{R}^n ~|~ \|\bm{E}\| \leq \rho \right\}$ and $\proj_{\mathcal{F}}(\bm{x}) := \argmin_{\bm{u} \in \mathcal{F}} \| \bm{x} - \bm{u} \|_2^2$ projects any point $\bm{x}$ onto a given set $\mathcal{F}$.

\begin{proof}
Geometrically, this inner minimization problem is equivalent to projecting the fixed vector $\bm{X}\bm{v}$ onto the inner feasible set generated by the adversarial perturbations, i.e., $\mathcal{F}(\bm{v}; \|\cdot\|, \rho) := \left\{ \bm{E}\bm{v} \in \mathbb{R}^n ~|~ \|\bm{E}\| \leq \rho \right\}$. Thus, the inner minimization problem can be written as
\begin{align*}
    \min_{\|\bm{E}\| \leq \rho} ~~ \frac{1}{n} \|\bm{X} \bm{v} + \bm{E}\bm{v}\|_2^2 = \frac{1}{n} \| \bm{X}\bm{v} - \proj_{\mathcal{F}(\bm{v}; \|\cdot\|, \rho)} (\bm{X} \bm{v}) \|_2^2 
\end{align*}
with $\proj_{\mathcal{F}(\bm{v}; \|\cdot\|, \rho)} (\bm{X} \bm{v}) := \argmin_{\bm{u} \in \mathcal{F}(\bm{v}; \|\cdot\|, \rho)} \| \bm{X} \bm{v} - \bm{u} \|_2^2$ a projection operator. Moreover, since the given matrix norm $\|\cdot\|$ is absolutely homogeneous and sub-additive, we can further ensure that the resulting feasible set for adversarial perturbations $\mathcal{F}(\bm{v}; \|\cdot\|, \rho)$ is convex and compact. 
\end{proof}

\subsection{Proof of Proposition~\ref{prop:samplewise-set}}

\textbf{Recall the Proposition~\ref{prop:samplewise-set}:} For any $\bm{v} \in \mathcal{V}_k$,  the resulting feasible set for adversarial perturbations $\mathcal{F}(\bm{v};\|\cdot\|_{2 \rightarrow \infty}, \rho)$ satisfies 
\begin{align*}
    \mathcal{F}(\bm{v};\|\cdot\|_{2 \rightarrow \infty}, \rho) = & ~ \left\{ 
    \bm{E} \bm{v} \in \mathbb{R}^n ~|~ \|\bm{E}_{i,:}\|_2 \leq \rho ~~ \forall ~ i \in [n] 
    \right\} \\
    = & ~ \left\{ \bm{u} \in \mathbb{R}^n ~|~ \|\bm{u}\|_{\infty} \leq \rho \|\bm{v}\|_2 \right\} . 
\end{align*}

\begin{proof}
In one direction, given $\bm{v} \in \mathcal{V}_k$, consider any point 
\begin{align*}
    \bm{E} \bm{v} \in \left\{ \bm{E} \bm{v} \in \mathbb{R}^n ~|~ \|\bm{E}_{i,:}\|_2 \leq \rho ~\forall ~ i \in [n] \right\} ~,
\end{align*}
we have $|[\bm{E}\bm{v}]_i| = |\bm{E}_{i,:} \bm{v}| \leq \|\bm{E}_{i, :}\|_2 \|\bm{v}\|_2 \leq \rho \|\bm{v}\|_2$. Therefore, we have
\begin{align*}
    \left\{ \bm{E} \bm{v} \in \mathbb{R}^n ~|~ \|\bm{E}_{i,:}\|_2 \leq \rho ~\forall ~ i \in [n] \right\} \subseteq \left\{ \bm{u} \in \mathbb{R}^n ~|~ \|\bm{u}\|_{\infty} \leq \rho \|\bm{v}\|_2 \right\}. 
\end{align*}
On the other direction, consider any $\bm{u} \in \{ \bm{u} \in \mathbb{R}^n ~|~ \|\bm{u}\|_{\infty} \leq \rho \|\bm{v}\|_2 \}$, i.e., $|\bm{u}_i| \leq \rho \|\bm{v}\|_2$, we can always pick $\bm{E}_{i, :} = \bm{u}_i \bm{v} / \|\bm{v}\|_2^2$ with $\|\bm{E}_{i, :}\|_2 \leq \rho$ such that $\langle \bm{E}_{i, :}, \bm{v} \rangle = \bm{u}_i$ for all $i \in [n]$. Thus, 
\begin{align*}
    \left\{ \bm{E} \bm{v} \in \mathbb{R}^n ~|~ \|\bm{E}_{i,:}\|_2 \leq \rho ~\forall ~ i \in [n] \right\} \supseteq \left\{ \bm{u} \in \mathbb{R}^n ~|~ \|\bm{u}\|_{\infty} \leq \rho \|\bm{v}\|_2 \right\}. 
\end{align*}
Combining two directions together completes the proof. 
\end{proof}

\subsection{Proof of Theorem~\ref{thm:samplewise-spca-MIP}} \label{app:samplewise-spca-MIP}

\textbf{Recall the Theorem~\ref{thm:samplewise-spca-MIP}:} 
Formulation~\eqref{eq:samplewise-spca-1} can be approximated by the following MIP.
\begin{align*}
    \begin{array}{rlll}
        \ub_k^{2 \rightarrow \infty} := \max\limits_{\bm{v}, \phi, \bm{g}, \bm{\xi}, \eta} & \sum_{j = 1}^d \lambda_j \bm{\xi}_j  + \frac{1}{n} \sum_{i = 1}^n \phi^i \\
        \emph{s.t.}~ & \bm{v} \in \overline{\mathcal{V}}_k, (\bm{g}, \bm{\xi}, \eta) \in \PLU([d]) \\
        & \phi^i \leq \phi_{\rho} (\langle \bm{x}^i, \bm{v} \rangle) & ~ \forall ~ i \in [n] 
    \end{array} ~ ,
\end{align*}
where its optimal value $\ub_k^{2 \rightarrow \infty}$ satisfies $\opt_k^{2 \rightarrow \infty} \leq \ub_k^{2 \rightarrow \infty} \leq \opt_k^{2 \rightarrow \infty} + \frac{1}{4 N^2} \sum_{j = 1}^d \lambda_j $.

\begin{proof}
Do singular value decomposition on $\widehat{\bm{\Sigma}}$ such that $\widehat{\bm{\Sigma}} = \sum_{j = 1}^d \lambda_j \bm{v}_j \bm{v}_j^{\top}$ with $\lambda_1 \geq \cdots \geq \lambda_d$ and define $\bm{g}_j := \langle \bm{v}_j, \bm{v} \rangle$ for all $j \in [d]$, then the non-convex constraint can be represented as $\obj = \bm{v}^{\top} \widehat{\bm{\Sigma}} \bm{v} = \sum_{j = 1}^d \lambda_j \bm{g}_j^2$. Using special-ordered set type-II (SOS-II) constraint, we can upper approximate the square term $\bm{g}_j^2$ based on the following procedures: 
\begin{enumerate}
    \item For each $j \in [d]$, let $\{\theta_j^{\ell} := \frac{\ell}{N} \}_{\ell = -N}^N$ be a sequence of splitting points that evenly splits the interval $[-1, 1]$, and let $\{\eta_j^{\ell} \}_{\ell = -N}^N$ a sequence of Special Ordered Set -- Type II (SOS-II) variables with respect to every aforementioned splitting point and also ensures $\sum_{\ell = -N}^N \eta_j^{\ell} = 1, \eta_j^{\ell} \geq 0$ for all $\ell$. 
    \item Based on the above splitting points and SOS-II variables, the decision variable $\bm{g}_j$ and its square $\bm{g}_j^2$ can be approximated by 
    \begin{align*}
        & \bm{g}_j := \sum_{\ell = - N}^N \theta_j^{\ell} \cdot \eta_j^{\ell} & \bm{\xi}_j := \sum_{\ell = - N}^N (\theta_j^{\ell})^2 \cdot \eta_j^{\ell} ~\in~ \left[ \bm{g}_j^2, ~ \bm{g}_j^2 +  \frac{1}{4 N^2} \right] .
    \end{align*}
    Refer to \PLU~\eqref{eq:PLU-set} for details. 
    
    \item Then, the non-convex constraint $\obj = \bm{v}^{\top} \widehat{\bm{\Sigma}} \bm{v} = \sum_{j = 1}^d \lambda_j \bm{g}_j^2$ can be approximated by a piece-wise linear function using SOS-II variables  
    \begin{align*}
        \sum_{j = 1}^d \lambda_j \bm{\xi}_j ~\in ~ & ~ \left[ \sum_{j = 1}^d \lambda_j \bm{g}_j^2, ~~  \sum_{j = 1}^d \lambda_j \bm{g}_j^2 + \frac{1}{4 N^2} \sum_{j = 1}^d \lambda_j \right] ~=~ \left[ \obj, ~~  \obj + \frac{1}{4 N^2} \sum_{j = 1}^d \lambda_j \right] ~~. 
    \end{align*}
\end{enumerate}
Therefore, the above maximization problem can be further approximated by a computationally tractable convex integer programming as follows: 
\begin{align*}
    \begin{array}{rlll}
        \ub_k^{2 \rightarrow \infty} := \max\limits_{\bm{v}, \phi, \bm{g}, \bm{\xi}, \eta} & \sum_{j = 1}^d \lambda_j \bm{\xi}_j  + \frac{1}{n} \sum_{i = 1}^n \phi^i \\
        \emph{s.t.}~ & \bm{v} \in \overline{\mathcal{V}}_k, (\bm{g}, \bm{\xi}, \eta) \in \PLU([d]) \\
        & \phi^i \leq \phi_{\rho} (\langle \bm{x}^i, \bm{v} \rangle) & ~ \forall ~ i \in [n] 
    \end{array} ~ ,
\end{align*}
Based on the above procedure, the piece-wise linear approximation from SOS-II ensures that 
\begin{align*}
    \opt_k^{2 \rightarrow \infty} + \frac{1}{4 N^2} \sum_{j = 1}^d \lambda_j \geq \ub_k^{2 \rightarrow \infty} \geq \opt_k^{2 \rightarrow \infty} . 
\end{align*}
\end{proof}

\subsection{Proof of Proposition~\ref{prop:samplewise-spca-MIP-r}} \label{app:samplewise-spca-MIP-r}

\textbf{Recall the Proposition~\ref{prop:samplewise-spca-MIP-r}:}
Given a pre-determined integer $r \leq d$, one can further approximate the Formulation~\eqref{eq:samplewise-spca-1} by the following mixed
integer convex program.
\begin{align*}
    \begin{array}{rlll}
        \ub_k^{2 \rightarrow \infty}(r) := \max\limits_{\bm{v}, \phi, \bm{g}, \bm{\xi}, \eta, \gamma} & \sum_{j = 1}^r \lambda_j \bm{\xi}_j  + \lambda_{r + 1} \gamma  + \frac{1}{n} \sum_{i = 1}^n \phi^i \\
        \emph{s.t.} ~~& \bm{v} \in \overline{\mathcal{V}}_k, (\bm{g}_j, \bm{\xi}_j, \eta_j)_{j \in [r]} \in \PLU([r])\\
        & \sum_{j = 1}^r \bm{g}_j^2 \leq 1 - \gamma, ~ \gamma \geq 0\\
        & \phi^i \leq \phi_{\rho} (\langle \bm{x}^i, \bm{v} \rangle) & ~ \forall ~ i \in [n] 
    \end{array} ~ . 
\end{align*}
The optimal value $\ub^{2 \rightarrow \infty}(r)$ satisfies $\opt^{2 \rightarrow \infty}
\leq \ub^{2 \rightarrow \infty}(r)\leq \opt^{2 \rightarrow \infty} + \frac{1}{4N^2} \sum_{j = 1}^r \lambda_j + \widehat{\gamma} (\lambda_{r + 1} - \lambda_d)$ with $\widehat{\gamma}$ being the optimal solution of the above convex integer programming.

\begin{proof}
Suppose $\bm{v}_*$ is the optimal solution for \eqref{eq:samplewise-spca-1}. Let $R := \text{span}(\bm{v}_1, \ldots, \bm{v}_r)$ for some $r \leq d$. 
Orthogonal decompose $\bm{v}_*$ such that $\bm{v}_* = \alpha \bm{v}_R + \beta \bm{v}_{\perp}$ with $\bm{v}_R \in R, ~ \bm{v}_{\perp} \perp R, ~ \|\bm{v}_R\|_2 = \|\bm{v}_{\perp}\|_2 = 1$, and $\alpha^2 + \beta^2 = 1$. Then, we have 
\begin{align*}
    \bm{v}_*^{\top} \widehat{\bm{\Sigma}} \bm{v}_* = \sum_{j = 1}^r \lambda_j \bm{g}_j^2 + \sum_{j > r} \lambda_j \bm{g}_j^2 \quad \text{with} \quad \sum_{j = 1}^r \bm{g}_j^2 = \alpha^2, \quad \sum_{j > r} \bm{g}_j^2 = \beta^2 , 
\end{align*}
and thus 
\begin{align*}
    \bm{v}_*^{\top} \widehat{\bm{\Sigma}} \bm{v}_* = \sum_{j = 1}^r \lambda_j \bm{g}_j^2 + \sum_{j > r} \lambda_j \bm{g}_j^2 \leq \sum_{j = 1}^r \lambda_j \bm{g}_j^2 + \beta^2 \lambda_{r + 1} \leq \sum_{j = 1}^r \lambda_j \bm{\xi}_j + \beta^2 \lambda_{r + 1}, 
\end{align*}
where the final inequality requests $r \times (2 N + 1)$ SOS-II variables. Since $\beta^2$ is unknown, in our new formulation, we can introduce a new variable $\gamma \in [0, 1]$ to denote $\beta^2$. Therefore, for a pre-determined parameter $r \leq d$, the maximization problem can be further written as  
\begin{align*}
    \begin{array}{rlll}
        \ub_k^{2 \rightarrow \infty}(r) := \max\limits_{\bm{v}, \phi, \bm{g}, \bm{\xi}, \eta, \gamma} & \sum_{j = 1}^r \lambda_j \bm{\xi}_j  + \lambda_{r + 1} \gamma  + \frac{1}{n} \sum_{i = 1}^n \phi^i \\
        \emph{s.t.} ~~& \bm{v} \in \overline{\mathcal{V}}_k, (\bm{g}_j, \bm{\xi}_j, \eta_j)_{j \in [r]} \in \PLU([r])\\
        & \sum_{j = 1}^r \bm{g}_j^2 \leq 1 - \gamma, ~ \gamma \geq 0\\
        & \phi^i \leq \phi_{\rho} (\langle \bm{x}^i, \bm{v} \rangle) & ~ \forall ~ i \in [n] 
    \end{array} ~ . 
\end{align*}
Similarly, the piece-wise linear approximation from SOS-II ensures that 
\begin{align*}
    \opt^{2 \rightarrow \infty} + \frac{1}{4N^2} \sum_{j = 1}^r \lambda_j + \widehat{\gamma} (\lambda_{r + 1} - \lambda_d) \geq \ub^{2 \rightarrow \infty}(r) \geq \opt^{2 \rightarrow \infty}
\end{align*}
with $\widehat{\gamma}$ the optimal solution of the above convex integer programming. 
\end{proof}

\subsection{Proof of Proposition~\ref{prop:featurewise-set}}

\textbf{Recall the Proposition~\ref{prop:featurewise-set}:}
For any $\bm{v} \in \mathcal{V}_k$, the feasible set for adversarial perturbations $\mathcal{F}(\bm{v};\|\cdot\|_{1 \rightarrow 2}, \rho)$ is 
\begin{align*}
    \mathcal{F}(\bm{v};\|\cdot\|_{1 \rightarrow 2}, \rho) := & ~ \left\{ 
    \bm{E} \bm{v} \in \mathbb{R}^n ~|~ \|\bm{E}_{:,j}\|_2 \leq \rho ~~ \forall ~ j \in [d] 
    \right\} \\
    = & ~ \left\{ \bm{u} \in \mathbb{R}^n ~|~ \|\bm{u}\|_2 \leq \rho \|\bm{v}\|_1 \right\} 
\end{align*}

\begin{proof}
Similar to the proof of Proposition~\ref{prop:featurewise-set}, it is easy to verify that 
\begin{align*}
    \left\{ 
    \bm{E} \bm{v} \in \mathbb{R}^n ~|~ \|\bm{E}_{:,j}\|_2 \leq \rho ~~ \forall ~ j \in [d] 
    \right\} \subseteq  \left\{ \bm{u} \in \mathbb{R}^n ~|~ \|\bm{u}\|_2 \leq \rho \|\bm{v}\|_1 \right\}  ~ . 
\end{align*}
On the other direction, consider any $\bm{u} \in \{ \bm{u} \in \mathbb{R}^n ~|~ \|\bm{u}\|_2 \leq \rho \|\bm{v}\|_1 \}$, by setting 
\begin{align*}
    \bm{E}_{:,j} = \frac{\bm{u}}{\|\bm{v}\|_1} \cdot {\tt sgn} \left(\bm{v}_j\right)~~ \forall ~ j \in [d] 
\end{align*}
with $\|\bm{E}_{:,j}\|_2 \leq \rho$, we have $\bm{E}\bm{v} = \bm{u}$. Thus, 
\begin{align*}
    \left\{ 
    \bm{E} \bm{v} \in \mathbb{R}^n ~|~ \|\bm{E}_{:,j}\|_2 \leq \rho ~~ \forall ~ j \in [d] 
    \right\} \supseteq  \left\{ \bm{u} \in \mathbb{R}^n ~|~ \|\bm{u}\|_2 \leq \rho \|\bm{v}\|_1 \right\}  ~ . 
\end{align*}
Combining two directions together completes the proof. 
\end{proof}

\subsection{Proof of Theorem~\ref{thm:featurewise-spca-MIP}} \label{app:featurewise-spca-MIP}

\textbf{Recall the Theorem~\ref{thm:featurewise-spca-MIP}:}
Formulation~\eqref{eq:emp-adv-spca-2} can be approximated by the following MIP. 
\begin{align*}
    \begin{array}{rlll}
        \sqrt{\ub^{1 \rightarrow 2}_k} := \max\limits_{\bm{v}, t, \bm{y}, \bm{g}, \bm{\xi}, \eta} & \frac{1}{\sqrt{n}} t \\
        \emph{s.t.}~~ & \bm{v} \in \overline{\mathcal{V}}_k, (\bm{g}, \bm{\xi}, \eta) \in \PLU([d]) \\
        & n \sum_{j = 1}^d \lambda_j \bm{\xi}_j \geq (t + \rho \bm{y})^2 \\
        & t \geq 0, ~ \bm{y} \geq \|\bm{v}\|_1
    \end{array} ~. 
\end{align*}
Moreover, $ \opt^{1 \rightarrow 2}_k\leq \ub^{1 \rightarrow 2}_k \leq \opt^{1 \rightarrow 2}_k + \frac{1}{4 N^2} \sum_{j = 1}^d \lambda_j  $.

\begin{proof}

By introducing a new non-negative variable $t$ to denote the term $\|\bm{X}\bm{v}\|_2 - \rho \|\bm{v}\|_1$, the problem~\eqref{eq:emp-adv-spca-2} becomes 
\begin{align*}
    & ~ \max_{\bm{v}, t} ~ \frac{1}{n} t^2 ~~\text{s.t.}~~ \|\bm{X}\bm{v}\|_2 - \rho \|\bm{v}\|_1 \geq t \geq 0, ~ \bm{v} \in \overline{\mathcal{V}}_k, \\
    \Leftrightarrow ~&~ \max_{\bm{v}, t, \bm{y}} ~ \frac{1}{n} t^2 ~~\text{s.t.}~~ \|\bm{X}\bm{v}\|_2 - \rho \bm{y} \geq t \geq 0, ~ \bm{y} \geq \|\bm{v}\|_1, ~ \bm{v} \in \overline{\mathcal{V}}_k, \\
    \Leftrightarrow ~&~ \max_{\bm{v}, t, \bm{y}} ~ \frac{1}{n} t^2 ~~\text{s.t.}~~ n \bm{v}^{\top} \bm{\Sigma} \bm{v} \geq (t + \rho \bm{y})^2 , ~ t \geq 0, ~ \bm{y} \geq \|\bm{v}\|_1, ~ \bm{v} \in \overline{\mathcal{V}}_k, 
\end{align*}
where the first if and only if holds since $\bm{y} = \|\bm{v}\|_1$ in optimality condition, the second if and only if holds since both $\bm{v}^{\top} \bm{\Sigma} \bm{v}$ and $t + \rho \bm{y}$ are non-negative. Recall $\bm{\Sigma} = \frac{1}{n} \bm{X}^{\top} \bm{X} = \frac{1}{n} \sum_{i = 1}^n \bm{x}^i (\bm{x}^i)^{\top}$ and do singular value decomposition on $\bm{\Sigma}$ such that  $\bm{\Sigma} = \sum_{j = 1}^d \lambda_j \bm{v}_j \bm{v}_j^{\top}$. The quadratic term $\bm{v}^{\top} \bm{\Sigma} \bm{v}$ becomes $\bm{v}^{\top} \bm{\Sigma} \bm{v} = \sum_{j = 1}^d \lambda_j \langle \bm{v}_j, \bm{v} \rangle^2 = \sum_{j = 1}^d \lambda_j \bm{g}_j^2$ with $\bm{g}_j = \langle \bm{v}_j, \bm{v} \rangle$ for all $j \in [d]$. Still using SOS-II constraint, one can upper approximate the quadratic term $\bm{g}_j^2$ via piecewise linear function as follows: for each $j \in [d]$, let $\{\theta_j^{\ell} := \frac{\ell}{N} \}_{\ell = -N}^N$ be a sequence splitting points that evenly splits the interval $[-1, 1]$, and let $\{\eta_j^{\ell} \}_{\ell = -N}^N$ a sequence of Special Ordered Set -- Type II (SOS-II) variables with respect to every aforementioned splitting point. Then for every $j \in [d]$, we have $\bm{g}_j$ and $\bm{g}_j^2$ can be approximated by 
\begin{align*}
    & \bm{g}_j := \sum_{\ell = - N}^N \theta_j^{\ell} \cdot \eta_j^{\ell} & \bm{\xi}_j := \sum_{\ell = - N}^N (\theta_j^{\ell})^2 \cdot \eta_j^{\ell} ~\in~ \left[ \bm{g}_j^2, ~ \bm{g}_j^2 +  \frac{1}{4 N^2} \right]
\end{align*}
and therefore the quadratic term $n \bm{v}^{\top} \bm{\Sigma} \bm{v}$  can be approximated using piece-wise linear function, 
\begin{align*}
    n \sum_{j = 1}^d \lambda_j \bm{\xi}_j - \frac{n}{4 N^2} \sum_{j = 1}^d \lambda_j \leq n \bm{v}^{\top} \bm{\Sigma} \bm{v} \leq n \sum_{j = 1}^d \lambda_j \bm{\xi}_j ~~ .
\end{align*}
To obtain an upper approximation for \eqref{eq:emp-adv-spca-2}, the original non-convex constraint can be therefore replaced by the convex constraint $n \sum_{j = 1}^d \lambda_j \bm{\xi}_j \geq (t + \rho \bm{y})^2$. Thus Formulation~\eqref{eq:emp-adv-spca-2} can be written as 
\begin{align*}
    \begin{array}{rlll}
        \sqrt{\ub^{1 \rightarrow 2}_k} := \max\limits_{\bm{v}, t, \bm{y}, \bm{g}, \bm{\xi}, \eta} & \frac{1}{\sqrt{n}} t \\
        \emph{s.t.}~~ & \bm{v} \in \overline{\mathcal{V}}_k, (\bm{g}, \bm{\xi}, \eta) \in \PLU([d]) \\
        & n \sum_{j = 1}^d \lambda_j \bm{\xi}_j \geq (t + \rho \bm{y})^2 \\
        & t \geq 0, ~ \bm{y} \geq \|\bm{v}\|_1
    \end{array} ~~,  
\end{align*}
which ensures $\opt^{1 \rightarrow 2}_k + \frac{1}{4 N^2} \sum_{j = 1}^d \lambda_j  \geq \ub^{1 \rightarrow 2}_k \geq \opt^{1 \rightarrow 2}_k$. 

\end{proof}

\subsection{Proof of Proposition~\ref{prop:featurewise-spca-MIP-r}} \label{app:featurewise-spca-MIP-r}

\begin{proposition}\label{prop:featurewise-spca-MIP-r}
For a pre-determined integer $r \leq d$, Formulation~\eqref{eq:emp-adv-spca-2} can be further approximated by the following MIP.
\begin{align}
    \begin{array}{rlll}
        \sqrt{\overline{\ub}^{1 \rightarrow 2}_k(r)} := \max\limits_{\bm{v}, t, \bm{y}, \bm{g}, \bm{\xi}, \eta, \gamma} & \frac{1}{\sqrt{n}} t \\
        \emph{s.t.} ~~~~~& \bm{v} \in \overline{\mathcal{V}}_k \\
        & \sum_{j = 1}^r \bm{g}_j^2 \leq 1 - \gamma, ~ \gamma \geq 0 \\
        & (\bm{g}_j, \bm{\xi}_j, \eta_j)_{j \in [r]} \in \PLU([r]) \\
        & n \sum_{j = 1}^r \lambda_j \bm{\xi}_j + n \lambda_{r + 1} \gamma \geq (t + \rho \bm{y})^2 \\
        & t \geq 0, ~ \bm{y} \geq \|\bm{v}\|_1 \\
    \end{array} ~,~ \label{eq:featurewise-MIP-r} 
\end{align}
which ensures $\ub^{1 \rightarrow 2}_k(r) 
\leq \overline{\ub}^{1 \rightarrow 2}_k(r) 
\leq \ub^{1 \rightarrow 2}_k(r) + \frac{\overline{t}^2}{4 \overline{N}^2} 
\leq \opt_k^{1 \rightarrow 2} + \frac{1}{4 N^2} \sum_{j = 1}^r \lambda_j + \widehat{\gamma} (\lambda_{r + 1} - \lambda_d) + \frac{\overline{t}^2}{4 \overline{N}^2}.$
\end{proposition}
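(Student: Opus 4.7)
The plan is to mimic the orthogonal decomposition argument used in Proposition~\ref{prop:samplewise-spca-MIP-r}, but now applied to the featurewise MIP of Theorem~\ref{thm:featurewise-spca-MIP}. The only place where all $d$ eigenvalues appear is in the constraint $n \sum_{j=1}^{d} \lambda_{j} \bm{\xi}_{j} \geq (t+\rho \bm{y})^{2}$, so reducing the number of SOS-II variables from $d(2N+1)$ to $r(2N+1)$ amounts to showing that the ``tail'' contribution $\sum_{j>r} \lambda_{j} \bm{g}_{j}^{2}$ can be absorbed into a single slack variable $\gamma$ multiplied by $\lambda_{r+1}$, at the cost of a controlled additive gap.

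First, for any feasible $\bm{v} \in \overline{\mathcal{V}}_{k}$, write $\bm{v} = \alpha \bm{v}_{R} + \beta \bm{v}_{\perp}$ with $\bm{v}_{R} \in R := \mathrm{span}(\bm{v}_{1},\ldots,\bm{v}_{r})$ and $\bm{v}_{\perp} \perp R$, so that $\sum_{j=1}^{r} \bm{g}_{j}^{2} = \alpha^{2}$, $\sum_{j>r} \bm{g}_{j}^{2} = \beta^{2}$, and $\alpha^{2}+\beta^{2} \leq 1$. This directly yields
\begin{equation*}
n\bm{v}^{\top}\bm{\Sigma}\bm{v} \;=\; n\sum_{j=1}^{r} \lambda_{j} \bm{g}_{j}^{2} + n\sum_{j>r} \lambda_{j}\bm{g}_{j}^{2} \;\leq\; n\sum_{j=1}^{r} \lambda_{j}\bm{g}_{j}^{2} + n\lambda_{r+1}\beta^{2}.
\end{equation*}
Introducing a non-negative slack variable $\gamma$ together with the linear constraint $\sum_{j=1}^{r} \bm{g}_{j}^{2} \leq 1-\gamma$ encodes $\gamma \leq \beta^{2}$, so replacing $n\sum_{j>r}\lambda_{j}\bm{g}_{j}^{2}$ by $n\lambda_{r+1}\gamma$ yields a valid upper bound on $n\bm{v}^{\top}\bm{\Sigma}\bm{v}$. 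Applying the SOS-II approximation $\bm{\xi}_{j} \in [\bm{g}_{j}^{2}, \bm{g}_{j}^{2}+\tfrac{1}{4N^{2}}]$ only for $j \in [r]$ as in Theorem~\ref{thm:featurewise-spca-MIP} then inflates the left-hand side by at most $\tfrac{n}{4N^{2}}\sum_{j=1}^{r}\lambda_{j}$. Combining these two steps turns the original constraint into the relaxed one of Formulation~\eqref{eq:featurewise-MIP-r}, which establishes feasibility-preservation from \eqref{eq:emp-adv-spca-2} into \eqref{eq:featurewise-MIP-r}, hence $\opt_{k}^{1\rightarrow 2} \leq \overline{\ub}^{1\rightarrow 2}_{k}(r)$.

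For the reverse direction, I would take an optimal $(\widehat{\bm{v}},\widehat{t},\widehat{\bm{y}},\widehat{\bm{g}},\widehat{\bm{\xi}},\widehat{\eta},\widehat{\gamma})$ of \eqref{eq:featurewise-MIP-r} and compare it against $\ub_{k}^{1\rightarrow 2}(r)$, the optimal value of the analogous MIP that keeps $\bm{g}_{j}^{2}$ exactly on the top-$r$ block. The SOS-II error contributes at most $\tfrac{1}{4N^{2}}\sum_{j=1}^{r}\lambda_{j}$; the secondary piecewise-linear upper approximation controlled by $(\overline{N},\overline{t})$, which is applied to the quadratic $(t+\rho\bm{y})^{2}$ on the right-hand side in order to keep the formulation within the SOS-II framework when desired, contributes the extra $\tfrac{\overline{t}^{2}}{4\overline{N}^{2}}$. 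The tail term $\widehat{\gamma}(\lambda_{r+1}-\lambda_{d})$ arises because the relaxation allows $\gamma$ to represent mass distributed uniformly at $\lambda_{r+1}$, while in \eqref{eq:emp-adv-spca-2} the true tail mass can be as low as $\lambda_{d}$; the worst-case per-unit loss is therefore $\lambda_{r+1}-\lambda_{d}$. Chaining these three losses yields the second and third inequalities in the claimed bound.

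The main obstacle will be tracking how the piecewise-linear approximation of $(t+\rho\bm{y})^{2}$ interacts with the eigenvalue relaxation, since $\widehat{t}$ now appears both in the objective and inside a constraint whose slack governs the tail contribution; a clean bound requires showing that the optimal $\widehat{t}$ of \eqref{eq:featurewise-MIP-r} is upper bounded by $\overline{t}$, so that the PLU error term $\tfrac{\overline{t}^{2}}{4\overline{N}^{2}}$ is uniform over the feasible region. Once this uniform bound is in hand, the three losses combine additively and the chain in the proposition's statement follows directly.
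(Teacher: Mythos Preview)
Your core argument---orthogonal decomposition of $\bm{v}$ onto $R=\mathrm{span}(\bm{v}_1,\ldots,\bm{v}_r)$, bounding the tail $\sum_{j>r}\lambda_j\bm{g}_j^2$ by $\lambda_{r+1}\beta^2$, introducing the slack $\gamma$ via $\sum_{j=1}^r\bm{g}_j^2\leq 1-\gamma$, and applying the SOS-II upper approximation only on the top-$r$ block---is exactly the paper's approach, and your derivation of $\opt_k^{1\to2}\leq\overline{\ub}_k^{1\to2}(r)$ together with the $\widehat{\gamma}(\lambda_{r+1}-\lambda_d)$ tail loss matches the paper line for line.

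The one place where you diverge is your explanation of the extra $\tfrac{\overline{t}^2}{4\overline{N}^2}$ term. You attribute it to a secondary PLU applied to $(t+\rho\bm{y})^2$ on the right-hand side, but note that the constraint $n\sum_j\lambda_j\bm{\xi}_j+n\lambda_{r+1}\gamma\geq(t+\rho\bm{y})^2$ is already convex as written, and replacing the right-hand side by a piecewise-linear \emph{upper} approximation would tighten, not relax, the feasible region---so the sign of the resulting error would go the wrong way for the inequality $\overline{\ub}_k^{1\to2}(r)\geq\ub_k^{1\to2}(r)$. The paper's own proof is no more illuminating here: it never defines $\ub_k^{1\to2}(r)$ (without the bar), $\overline{t}$, or $\overline{N}$, and simply asserts the full chain of inequalities at the end. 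So you are attempting to fill a genuine expository gap in the paper; just be aware that your proposed mechanism for that particular term does not work as stated, and the intended intermediate quantity $\ub_k^{1\to2}(r)$ remains unspecified in the source.
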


\begin{proof}
For a fixed $r \leq d$, the quadratic term can be represented by 
\begin{align*}
    \bm{v}^{\top} \bm{\Sigma} \bm{v} = \sum_{j = 1}^r \lambda_j \bm{g}_j^2 + \sum_{j > r} \lambda_j \bm{g}_j^2 \leq \sum_{j = 1}^r \lambda_j \bm{g}_j^2 + \beta^2 \lambda_{r + 1} \leq \sum_{j = 1}^r \lambda_j \bm{\xi}_j + \beta^2 \lambda_{r + 1} ~ ~ , 
\end{align*}
where $\beta = \langle \bm{v}, \text{span}(\bm{v}_1, \ldots, \bm{v}_r) \rangle$ with $r \times (2N + 1)$ SOS-II variables. Then the above problem can be further written as 
\begin{align*}
    \begin{array}{rlll}
        \sqrt{\overline{\ub}^{1 \rightarrow 2}_k(r)} := \max\limits_{\bm{v}, t, \bm{y}, \bm{g}, \bm{\xi}, \eta, \gamma} & \frac{1}{\sqrt{n}} t \\
        \emph{s.t.} ~~~~~& \bm{v} \in \overline{\mathcal{V}}_k \\
        & \sum_{j = 1}^r \bm{g}_j^2 \leq 1 - \gamma, ~ \gamma \geq 0 \\
        & (\bm{g}_j, \bm{\xi}_j, \eta_j)_{j \in [r]} \in \PLU([r]) \\
        & n \sum_{j = 1}^r \lambda_j \bm{\xi}_j + n \lambda_{r + 1} \gamma \geq (t + \rho \bm{y})^2 \\
        & t \geq 0, ~ \bm{y} \geq \|\bm{v}\|_1 \\
    \end{array} ~,
\end{align*}
which ensures 
\begin{align*}
    \opt_k^{1 \rightarrow 2} + \frac{1}{4 N^2} \sum_{j = 1}^r \lambda_j + \widehat{\gamma} (\lambda_{r + 1} - \lambda_d) + \frac{\overline{t}^2}{4 \overline{N}^2} &\geq \ub^{1 \rightarrow 2}_k(r) + \frac{\overline{t}^2}{4 \overline{N}^2} \geq \overline{\ub}^{1 \rightarrow 2}_k(r)\\ &\geq \ub^{1 \rightarrow 2}_k(r) \geq \opt_k^{1 \rightarrow 2} .
\end{align*}
\end{proof}

\section{Proofs in Section~\ref{sec:stat-results}}

\subsection{Proof of Theorem~\ref{thm:pop-SPCA-NOAP-1}} \label{app:pop-SPCA-NOAP-1}

\textbf{Recall the Theorem~\ref{thm:pop-SPCA-NOAP-1}:}
The optimal solution for the population version of sparse PCA-NOAP~\eqref{eq:pop-adv-spca-1} equals the optimal solution for the population version of sparse PCA, i.e., 
\begin{align*}
    \argmax_{\|\bm{v}\|_2 = 1, \|\bm{v}\|_0 \leq k} ~ \mathbb{E}_{\bm{x} \sim \mathcal{N}(\bm{0}, \bm{\Sigma})} [\ell_{\rho}(\bm{v}^{\top} \bm{x})] \quad = \quad  \argmax_{\|\bm{v}\|_2 = 1, \|\bm{v}\|_0 \leq k} \bm{v}^{\top} \bm{\Sigma} \bm{v}.  
\end{align*} 

\begin{proof}
Let us first define the following events:
\begin{align*}
    \mathcal{E}(S_0) := \{ \bm{v}^{\top} \bm{x} \in [- \rho, ~ \rho] \}, \quad 
    \mathcal{E}(S_-) :=  \{ \bm{v}^{\top} \bm{x} < - \rho \}, \quad 
    \mathcal{E}(S_+) := \{ \bm{v}^{\top} \bm{x} > \rho \} . 
\end{align*}
Then we can represent the expected objective of \eqref{eq:pop-adv-spca-1} as a summation of three parts 
\begin{align*}
    \mathbb{E}_{\bm{x} \sim \mathcal{N}(\bm{0}, \bm{\Sigma})} [\ell_{\rho}(\bm{v}^{\top} \bm{x})] = & ~ \mathbb{E}_{\bm{x}} [0|\mathcal{E}(S_0)] \cdot \mathbb{P}(\mathcal{E}(S_0)) \\
    & ~ + \mathbb{E}_{\bm{x}} [(\bm{v}^{\top} \bm{x} - \rho)^2|\mathcal{E}(S_+)] \cdot \mathbb{P}(\mathcal{E}(S_+)) \\
    & ~ + \mathbb{E}_{\bm{x}} [(\bm{v}^{\top} \bm{x} + \rho)^2|\mathcal{E}(S_-)] \cdot \mathbb{P}(\mathcal{E}(S_-)) ~ . 
\end{align*}
Based on the symmetric property of Gaussian distribution $\mathcal{N}(\bm{0}, \bm{\Sigma})$, we have $\mathbb{E}_{\bm{x}} [(\bm{v}^{\top} \bm{x} - \rho)^2|\mathcal{E}(S_+)] \cdot \mathbb{P}(\mathcal{E}(S_+)) = \mathbb{E}_{\bm{x}} [(\bm{v}^{\top} \bm{x} + \rho)^2|\mathcal{E}(S_-)] \cdot \mathbb{P}(\mathcal{E}(S_-))$. Given a fixed $\bm{v}$, let $\bm{v}^{\top} \bm{x}_i =: x_i^v \sim \mathcal{N}(0, \sigma_v^2)$ an i.i.d. random variable with $\sigma_v^2 = \bm{v}^{\top} \bm{\Sigma} \bm{v}$, then set 
\begin{align*}
    h_{\rho} (\sigma_v) = & ~ \mathbb{E}_{\bm{x}} [(\bm{v}^{\top} \bm{x} - \rho)^2|\mathcal{E}(S_+)] \cdot \mathbb{P}(\mathcal{E}(S_+)) \\
    = & ~ \int_{x^v \geq \rho} (x^v - \rho)^2 f_{\mathcal{E}(S_+)} (x^v) \mathrm{d}x^v \cdot \mathbb{P}(\mathcal{E}(S_+)) \\
    = & ~ \int_{x^v \geq \rho} (x^v - \rho)^2 \frac{1}{\sigma_v \sqrt{2 \pi}} \exp \left( - \frac{(x^v)^2}{2 \sigma_v^2}  \right) \mathrm{d}x^v. 
\end{align*}
Taking gradient on $h_{\rho} (\sigma_v)$ with respect to $\sigma_v$ implies 
\begin{align*}
    \frac{\partial h_{\rho}}{\partial \sigma_v} = \frac{1}{\sqrt{2 \pi}} \bigg[ & ~ - \frac{1}{\sigma_v^2} \int_{x^v \geq \rho} (x^v - \rho)^2 \exp \left( - \frac{(x^v)^2}{2 \sigma_v^2}  \right) \mathrm{d}x^v \\
    & ~ + \frac{1}{\sigma_v^4} \int_{x^v \geq \rho} (x^v)^2 (x^v - \rho)^2 \exp \left( - \frac{(x^v)^2}{2 \sigma_v^2}  \right) \mathrm{d}x^v \bigg]. 
\end{align*}
Since 
\begin{align*}
    & ~ \int_{x^v \geq \rho} (x^v - \rho)^2 \exp \left( - \frac{(x^v)^2}{2 \sigma_v^2}  \right) \mathrm{d}x^v \\
    = & ~ \sqrt{\frac{\pi}{2}} \sigma_v (\rho^2 + \sigma_v^2) \cdot \text{erfc}\left( \frac{\rho}{\sqrt{2} \sigma_v} \right) - \rho \sigma_v^2 \exp \left( - \frac{\rho^2}{2 \sigma_v^2} \right) \\
    & ~ \int_{x^v \geq \rho} (x^v)^2 (x^v - \rho)^2 \exp \left( - \frac{(x^v)^2}{2 \sigma_v^2}  \right) \mathrm{d}x^v \\
    = & ~ \sqrt{\frac{\pi}{2}} \sigma_v^3 (\rho^2 + 3 \sigma_v^2) \cdot \text{erfc}\left( \frac{\rho}{\sqrt{2} \sigma_v} \right) - \rho \sigma_v^4 \exp \left( - \frac{\rho^2}{2 \sigma_v^2} \right)
\end{align*}
with $\text{erfc}\left( z \right) := \frac{2}{\sqrt{\pi}} \int_z^{\infty} \exp(- t^2) \mathrm{d}t > 0$, we have
\begin{align*}
    \int_{x^v \geq \rho} (x^v)^2 (x^v - \rho)^2 \exp \left( - \frac{(x^v)^2}{2 \sigma_v^2}  \right) \mathrm{d}x^v > \sigma_v^2 \int_{x^v \geq \rho} (x^v - \rho)^2 \exp \left( - \frac{(x^v)^2}{2 \sigma_v^2}  \right) \mathrm{d}x^v, 
\end{align*}
and thus $\frac{\partial h_{\rho}}{\partial \sigma_v} > 0$ for all $\sigma_v > 0$. Therefore, optimizing the population version of robust sparse PCA is equivalent to optimizing the classical sparse PCA problem for any fixed positive $\rho$, i.e., 
\begin{align*}
    \argmax_{\|\bm{v}\|_2 = 1, \|\bm{v}\|_0 \leq k} ~ h_{\rho} (\bm{v}^{\top} \bm{\Sigma} \bm{v}) \quad = \quad  \argmax_{\|\bm{v}\|_2 = 1, \|\bm{v}\|_0 \leq k} \bm{v}^{\top} \bm{\Sigma} \bm{v}.  
\end{align*}
\end{proof}

\subsection{Proof of Proposition~\ref{prop:featurewise-spca-pop}} \label{app:featurewise-spca-pop}

\textbf{Recall the Proposition~\ref{prop:featurewise-spca-pop}:}
Given a $\rho > 0$, the population version of Formulation~\ref{eq:emp-adv-spca-2} is 
\begin{align*}
    \max_{\bm{v} \in \mathcal{F}\left(\frac{\rho}{\sqrt{n}} \right)} ~ & ~ \left( \sqrt{\bm{v}^{\top} \bm{\Sigma} \bm{v} } - \frac{\rho}{\sqrt{n}} \|\bm{v}\|_1 \right)^2 - 2  \frac{\rho}{\sqrt{n}} \|\bm{v}\|_1 \sqrt{\bm{v}^{\top} \bm{\Sigma} \bm{v}} \cdot O \left( \frac{1}{n} \right) 
\end{align*}
with feasible set
\begin{align*}
    \mathcal{F}\left(\frac{\rho}{\sqrt{n}} \right) := \left\{ \bm{v} \in \mathcal{V}_s^d ~\left|~ \sqrt{\bm{v}^{\top} \bm{\Sigma} \bm{v}} \cdot \left( 1 + O\left( \frac{1}{n} \right) \right) \geq \frac{\rho}{\sqrt{n}} \|\bm{v}\|_1 \right. \right\} ~~ . 
\end{align*}

\begin{proof}

For a given $\rho > 0$, consider the population version of the feasible set of \eqref{eq:emp-adv-spca-2}, 
\begin{align*}
    \mathcal{F}\left(\frac{\rho}{\sqrt{n}} \right) := \left\{ \bm{v} \in \mathcal{V}_s^d ~\left|~ \mathbb{E}_{\bm{X}} \left[\frac{1}{\sqrt{n}}\|\bm{X}\bm{v}\|_2 \right] \geq \frac{\rho}{\sqrt{n}} \|\bm{v}\|_1 \right. \right\} ~~ . 
\end{align*}
Let $w_i := \bm{x}_i^{\top} \bm{v} \sim \mathcal{N}(0, \sigma_v^2)$ with $\sigma_v^2 = \bm{v}^{\top} \bm{\Sigma} \bm{v} = \lambda \langle \bm{v}_*, \bm{v} \rangle^2 + 1$ for all $i \in [n]$, then 
\begin{align*}
    \mathbb{E}_{\bm{X}} \left[\frac{1}{\sqrt{n}}\|\bm{X}\bm{v}\|_2 \right] = \mathbb{E}_{\bm{X}} \left[\frac{1}{\sqrt{n}}\|\bm{w}\|_2 \right] = \sigma_v \frac{\sqrt{2}}{\sqrt{n}} \frac{\Gamma(\frac{n + 1}{2})}{\Gamma(\frac{n}{2})} = \sigma_v \cdot \left( 1 + O\left( \frac{1}{n} \right) \right), 
\end{align*}
where $\Gamma(z) = \int_0^{\infty} t^{z- 1} \exp(-t) \mathrm{d}t$ denotes the Gamma function for any $z > 0$. We claim that: the set $\mathcal{F}\left(\frac{\rho}{\sqrt{n}} \right)$ is non-empty if 
\begin{align*}
    \rho \leq (\lambda + 1) \cdot \left( 1 + O\left( \frac{1}{n} \right) \right) \frac{\sqrt{n}}{\sqrt{k}} ~~. 
\end{align*}
Moreover, for a given $\|\bm{v}\|_2 = 1$, if 
\begin{align*}
    \rho \leq (\lambda \langle \bm{v}_*, \bm{v} \rangle^2 + 1) \cdot \left( 1 + O\left( \frac{1}{n} \right) \right) \frac{\sqrt{n}}{\sqrt{k}} ~~,  
\end{align*}
then $\bm{v} \in \mathcal{F}\left(\frac{\rho}{\sqrt{n}} \right)$. Now consider the population version of the objective function of \eqref{eq:emp-adv-spca-2}, and suppose that $\rho$ is appropriately chosen such that $\mathcal{F}\left(\frac{\rho}{\sqrt{n}} \right) \neq \emptyset$, 
\begin{align*}
    \max_{\bm{v} \in \mathcal{F}\left(\frac{\rho}{\sqrt{n}} \right)} ~ & ~ \mathbb{E}_{\bm{X}} \left[ \frac{1}{n} \left( \|\bm{X}\bm{v}\|_2 - \rho \|\bm{v}\|_1 \right)^2 \right] \\
    = \max_{\bm{v} \in \mathcal{F}\left(\frac{\rho}{\sqrt{n}} \right)} ~ & ~ \bm{v}^{\top} \bm{\Sigma} \bm{v} - 2 \frac{\rho}{\sqrt{n}} \|\bm{v}\|_1 \cdot \sqrt{\bm{v}^{\top} \bm{\Sigma} \bm{v}} \left( 1 + O\left( \frac{1}{n} \right) \right) + \frac{\rho^2}{n} \|\bm{v}\|_1^2 \\
    = \max_{\bm{v} \in \mathcal{F}\left(\frac{\rho}{\sqrt{n}} \right)} ~ & ~ \left( \sqrt{\bm{v}^{\top} \bm{\Sigma} \bm{v} } - \frac{\rho}{\sqrt{n}} \|\bm{v}\|_1 \right)^2 - 2  \frac{\rho}{\sqrt{n}} \|\bm{v}\|_1 \sqrt{\bm{v}^{\top} \bm{\Sigma} \bm{v}} \cdot O \left( \frac{1}{n} \right).
\end{align*}
\end{proof}

\subsection{Proof of Theorem~\ref{thm:vector-recovery}} \label{app:vector-recovery}

We begin by introducing an existing statistical result (Theorem 4.1, \citep{novikov2023sparse}) on ground truth recovery under \emph{oblivious} adversarial perturbation, see Proposition~\ref{prop:existing-stat-result}.  

\begin{proposition} \label{prop:existing-stat-result}
\textbf{Restatement of existing result .} Let $n, d, k, t \in \mathbb{N}, \lambda > 0, \delta \in (0,0.1)$. Let sample set $\widetilde{\bm{X}} = \sqrt{\lambda} \bm{u} \bm{v}_*^{\top} + \bm{W} + \bm{E}$,
where $\bm{u} \sim \mathcal{N}(0,1)^n$, $\bm{v}_* \in \mathbb{R}^d$ a $k$-sparse unit vector, $\bm{W} \sim \mathcal{N}(0,1)^{n \times d}$ independent of $\bm{u}$ and $\bm{E}$ is the adversary matrix such that 
\begin{align*}
    \|\bm{E}\|_{1 \rightarrow 2} = \rho \leq \epsilon \min\{\sqrt{\lambda}, \lambda\} \sqrt{\frac{n}{k}}  \quad \text{and} \quad \epsilon \sqrt{\ln(1/\epsilon)} \leq \delta^6 \min\left\{ 1, \min\{\sqrt{\lambda}, \lambda\} \sqrt{\frac{n}{k}} \right\}.
\end{align*}
Suppose the following additional parameter regime conditions are satisfied 
\begin{align*}
    n \gtrsim k + \frac{t \ln^2 d}{\delta^4}, \quad k \gtrsim \frac{t \ln d}{\delta^2}, \quad \lambda \gtrsim \frac{k}{\delta^6 \sqrt{t n}} \sqrt{\ln \left( 2 + \frac{td}{k^2} \left( 1 + \frac{d}{n} \right) \right)} ~. 
\end{align*}
Then there exists an algorithm that, given $\bm{X}, k, t$ in time $n \cdot d^{O(t)}$ outputs a unit vector $\widehat{\bm{v}}$ such that with probability at least $1 - o(1)$ as $d \rightarrow \infty$, $|\langle \widehat{\bm{v}}, \bm{v}_* \rangle | \geq 1 - \delta$. 
\end{proposition}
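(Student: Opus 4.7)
The plan is to bound $\alpha := \langle \widehat{\bm{v}}, \bm{v}_*\rangle$ from below by testing the optimizer $\widehat{\bm{v}}$ against the natural competitor $\bm{v}_*$ in the objective $g(\bm{v}) := \sqrt{\bm{v}^\top \bm{\Sigma}\bm{v}} - (\rho/\sqrt{n})\|\bm{v}\|_1$. Since both $g$ and $\mathcal{V}_k$ are invariant under $\bm{v}\mapsto -\bm{v}$, I would assume WLOG that $\alpha\geq 0$, so it suffices to prove $\alpha^2\geq 1-\delta$, which yields $\alpha\geq\sqrt{1-\delta}\geq 1-\delta$. The key structural fact from Assumption~\ref{assump:spiked-Wishart-model} is that for every unit vector $\bm{v}$,
\[
\bm{v}^\top \bm{\Sigma}\bm{v} \;=\; 1 + \lambda\,\langle \bm{v}, \bm{v}_*\rangle^2,
\]
which collapses the quadratic part of $g$ to a one-dimensional function of the correlation with the ground truth.

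The first step is to write out the optimality inequality $g(\widehat{\bm{v}})\geq g(\bm{v}_*)$ and plug in the spiked identity, obtaining
\[
\sqrt{1+\lambda\alpha^2} - \tfrac{\rho}{\sqrt{n}}\|\widehat{\bm{v}}\|_1 \;\geq\; \sqrt{1+\lambda} - \tfrac{\rho}{\sqrt{n}}\|\bm{v}_*\|_1.
\]
Applying the two cheap bounds $\|\bm{v}_*\|_1\leq \sqrt{k}$ (Cauchy--Schwarz on a $k$-sparse unit vector) and $\|\widehat{\bm{v}}\|_1\geq 0$, this rearranges to the clean one-variable inequality
\[
\sqrt{1+\lambda} - \sqrt{1+\lambda\alpha^2} \;\leq\; \rho\sqrt{k/n}.
\]

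Next I would lower bound the left-hand side via the conjugate identity
\[
\sqrt{1+\lambda}-\sqrt{1+\lambda\alpha^2} \;=\; \frac{\lambda(1-\alpha^2)}{\sqrt{1+\lambda}+\sqrt{1+\lambda\alpha^2}} \;\geq\; \frac{\lambda(1-\alpha^2)}{2\sqrt{1+\lambda}},
\]
and combine to obtain $1-\alpha^2 \;\leq\; (2\sqrt{1+\lambda}/\lambda)\cdot \rho\sqrt{k/n}$. Substituting the hypothesis $\rho\leq c\,\delta\lambda\sqrt{n/k}$ and choosing the absolute constant $c$ small enough to absorb the residual factor then delivers $\alpha^2\geq 1-\delta$, hence $|\alpha|\geq 1-\delta$ as required.

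The step I expect to be most delicate is the tightness trade-off produced by the two relaxations $\|\widehat{\bm{v}}\|_1\geq 0$ and $\|\bm{v}_*\|_1\leq \sqrt{k}$: together they leave a residual factor $\sqrt{1+\lambda}/\lambda$ in the final bound, which matches the stated $c\,\delta\lambda\sqrt{n/k}$ scaling cleanly in the moderate-$\lambda$ regime relevant to the comparison with Proposition~\ref{prop:existing-stat-result}, but becomes loose when $\lambda$ is very large. To recover the sharper $\min\{\sqrt{\lambda},\lambda\}$ scaling in the large-$\lambda$ regime, one would either replace $\|\widehat{\bm{v}}\|_1\geq 0$ by the tighter $\|\widehat{\bm{v}}\|_1\geq \|\widehat{\bm{v}}\|_2 = 1$, or run a support-overlap argument exploiting the fact that $|\alpha|$ close to one forces most of the mass of $\widehat{\bm{v}}$ onto $\mathrm{supp}(\bm{v}_*)$, so that $\|\widehat{\bm{v}}\|_1 - \|\bm{v}_*\|_1$ is genuinely small rather than of order $-\sqrt{k}$. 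Either refinement tightens the denominator enough to match the threshold stated in the theorem with a universal constant $c$.
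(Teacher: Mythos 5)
Your proposal does not address the statement it was assigned. Proposition~\ref{prop:existing-stat-result} is, as its title says, a \emph{restatement of an existing result} --- Theorem~4.1 of \citet{novikov2023sparse}. It asserts the existence of an algorithm that, given the \emph{adversarially corrupted sample matrix} $\widetilde{\bm{X}} = \sqrt{\lambda}\,\bm{u}\bm{v}_*^{\top} + \bm{W} + \bm{E}$ (not the population covariance $\bm{\Sigma}$), runs in time $n\cdot d^{O(t)}$ and outputs $\widehat{\bm{v}}$ with $|\langle \widehat{\bm{v}},\bm{v}_*\rangle|\geq 1-\delta$ with high probability. The paper does not prove this statement; it cites it as a benchmark against which Theorem~\ref{thm:vector-recovery} is compared. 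A proof would require the algorithmic and probabilistic machinery of that reference (certifying bounds on the corrupted empirical covariance, handling the randomness of $\bm{u}$ and $\bm{W}$, and controlling the runtime), none of which appears in your argument. What you have written is instead an attempted proof of Theorem~\ref{thm:vector-recovery}, which analyzes the deterministic population-level optimizer of $\sqrt{\bm{v}^{\top}\bm{\Sigma}\bm{v}} - (\rho/\sqrt{n})\|\bm{v}\|_1$.

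Even judged as a proof of Theorem~\ref{thm:vector-recovery}, the argument has a gap you yourself flag but do not close. Discarding $\|\widehat{\bm{v}}\|_1$ entirely and bounding $\|\bm{v}_*\|_1\leq\sqrt{k}$ leaves you with $1-\alpha^2 \leq 2c\,\delta\sqrt{1+\lambda}$, and the factor $\sqrt{1+\lambda}$ cannot be absorbed into a universal constant $c$ when $\lambda$ is large; replacing $\|\widehat{\bm{v}}\|_1\geq 0$ by $\|\widehat{\bm{v}}\|_1\geq 1$ only recovers an additive $\rho/\sqrt{n}$, which is a factor $1/\sqrt{k}$ too small. The paper's proof avoids this by first establishing Lemma~\ref{lemma:support-set} ($\supp(\widehat{\bm{v}})\subseteq S_*$, via an exchange argument) and then bounding the \emph{difference} $\|\bm{v}_*\|_1-\|\widehat{\bm{v}}\|_1 \leq \sqrt{k}\,\|\bm{v}_*-\widehat{\bm{v}}\|_2 \leq \sqrt{k}\sqrt{2-2\langle\widehat{\bm{v}},\bm{v}_*\rangle}$, which is the support-overlap refinement you only sketch as a possible fix. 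If your intent was to prove Theorem~\ref{thm:vector-recovery}, you would need to carry out that refinement explicitly; if the intent was to prove Proposition~\ref{prop:existing-stat-result}, the approach is simply not applicable.
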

Next, we will present our main theoretical results on ground truth recovery under adversarial perturbations and compare our main results to the existing result (Proposition~\ref{prop:existing-stat-result}) aforementioned. Before proving our main result (Theorem~\ref{thm:vector-recovery}), let us first show the following Lemma holds. 

\begin{lemma} \label{lemma:support-set} 
The support of optimal solution $\supp(\widehat{\bm{v}})$ is contained within $S_*$. 
\end{lemma}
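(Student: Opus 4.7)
The plan is to argue by contradiction: assume $\widehat{\bm{v}}$ has a nonzero component at some index outside $S_*$, and exhibit a feasible $\bm{v}' \in \mathcal{V}_k$ supported on $S_*$ that strictly improves the objective $\sqrt{\bm{v}^\top \bm{\Sigma} \bm{v}} - \tfrac{\rho}{\sqrt{n}} \|\bm{v}\|_1$. The driving observation is that, under Assumption~\ref{assump:spiked-Wishart-model}, $\bm{v}^\top \bm{\Sigma} \bm{v} = \lambda \langle \bm{v}_*, \bm{v}\rangle^2 + 1$ on $\mathcal{V}_k$, so components of $\bm{v}$ outside $S_* = \supp(\bm{v}_*)$ contribute nothing to the ``signal'' term $\langle \bm{v}_*, \bm{v}\rangle$ yet still pay into the $\ell_1$ penalty $\|\bm{v}\|_1$. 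Thus relocating their mass into $S_*$, with a carefully chosen sign, can only help.

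Concretely, I would split $\widehat{\bm{v}} = \widehat{\bm{v}}_A + \widehat{\bm{v}}_B$, where $A := \supp(\widehat{\bm{v}}) \cap S_*$ and $B := \supp(\widehat{\bm{v}}) \setminus S_*$, and suppose toward contradiction that $B \neq \emptyset$. Since $|A| + |B| \leq k = |S_*|$ and $|B| \geq 1$, we have $|A| < k$, so $S_* \setminus A$ is nonempty; pick any $i^* \in S_* \setminus A$, for which $[\bm{v}_*]_{i^*} \neq 0$ by definition of $S_*$. Setting $\beta := \|\widehat{\bm{v}}_B\|_2 > 0$ and choosing a sign $s \in \{-1,+1\}$ so that $s\,[\bm{v}_*]_{i^*}$ matches the sign of $\langle \bm{v}_*, \widehat{\bm{v}}_A\rangle$ (arbitrarily if that inner product vanishes), define
\[
\bm{v}' := \widehat{\bm{v}}_A + \beta\, s\, \bm{e}_{i^*}.
\]
Because $i^* \notin A$, we have $\|\bm{v}'\|_2^2 = \|\widehat{\bm{v}}_A\|_2^2 + \beta^2 = \|\widehat{\bm{v}}\|_2^2 = 1$, and $\supp(\bm{v}') \subseteq A \cup \{i^*\} \subseteq S_*$ with $|\supp(\bm{v}')| \leq |A| + 1 \leq k$, so $\bm{v}' \in \mathcal{V}_k$.

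To finish, I would check that both pieces of the objective improve. The sign choice gives
\[
|\langle \bm{v}_*, \bm{v}'\rangle| = |\langle \bm{v}_*, \widehat{\bm{v}}_A\rangle| + \beta\,|[\bm{v}_*]_{i^*}| > |\langle \bm{v}_*, \widehat{\bm{v}}\rangle|,
\]
since $\beta > 0$ and $[\bm{v}_*]_{i^*} \neq 0$, and hence $(\bm{v}')^\top \bm{\Sigma} \bm{v}' > \widehat{\bm{v}}^\top \bm{\Sigma} \widehat{\bm{v}}$. Meanwhile, the elementary inequality $\|\widehat{\bm{v}}_B\|_2 \leq \|\widehat{\bm{v}}_B\|_1$ yields $\|\bm{v}'\|_1 = \|\widehat{\bm{v}}_A\|_1 + \beta \leq \|\widehat{\bm{v}}_A\|_1 + \|\widehat{\bm{v}}_B\|_1 = \|\widehat{\bm{v}}\|_1$. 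Combining these strict and non-strict inequalities contradicts the optimality of $\widehat{\bm{v}}$.

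I do not anticipate a real obstacle: the argument is essentially forced by the spiked structure $\bm{\Sigma} = \lambda \bm{v}_* \bm{v}_*^\top + \bm{I}_d$. The only point requiring a moment's care is the tie-breaking for $s$ when $\langle \bm{v}_*, \widehat{\bm{v}}_A\rangle = 0$; either sign is fine because only $\langle \bm{v}_*, \bm{v}\rangle^2$ enters $\bm{v}^\top \bm{\Sigma} \bm{v}$, and the strict improvement $\beta\,|[\bm{v}_*]_{i^*}| > 0$ persists.
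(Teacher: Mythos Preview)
Your proof is correct and follows essentially the same idea as the paper's: both argue by contradiction, relocating mass from coordinates outside $S_*$ to a coordinate inside $S_*$ so that the inner product $|\langle \bm{v}_*,\bm{v}\rangle|$ strictly increases while $\|\bm{v}\|_1$ does not. The only cosmetic difference is that the paper swaps a single outside index $i$ for a single inside index $j$ (preserving $\|\cdot\|_1$ exactly), whereas you collapse all of $\widehat{\bm{v}}_B$ at once onto one inside index and invoke $\|\widehat{\bm{v}}_B\|_2 \le \|\widehat{\bm{v}}_B\|_1$ to get a weak decrease in $\|\cdot\|_1$.
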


\begin{proof}
We prove this by contradiction. Recall $\widehat{\bm{v}} := \argmax_{\bm{v} \in \mathcal{V}_k} ~ \sqrt{\bm{v}^{\top} \bm{\Sigma} \bm{v} } - \frac{\rho}{\sqrt{n}} \|\bm{v}\|_1$ be the optimal solution of Formulation~\eqref{eq:pop-featurewise-spca}. Suppose $\widehat{\bm{v}}$ has non-zero indexes outside $S$. WLOG, we have $\langle \widehat{\bm{v}}, \bm{v}_* \rangle \geq 0$, otherwise, $-\widehat{\bm{v}}$ is also an optimal solution and we take this solution instead. Suppose index $i \notin S$ and $\widehat{\bm{v}}_i \neq 0$. Since $|\supp(\widehat{\bm{v}})| \leq k$, $|\supp(\widehat{\bm{v}}) \cap S| \leq k-1$. Therefore, there exists an index $j \in S$ such that $\widehat{\bm{v}}_j = 0$. Define $\widehat{\bm{v}}'$ where 
\begin{align*}
    \left\{
    \begin{array}{llll}
        \widehat{\bm{v}}'_l = \widehat{\bm{v}}_l,  ~~\text{for} ~~ l \neq i,j  \\
        \widehat{\bm{v}}'_i = 0\\
        \widehat{\bm{v}}'_j = |\widehat{\bm{v}}_i| \times \text{sgn} \left({\bm{v}_{*j}} \right)
    \end{array}
    \right. ~~,
    \end{align*}
We have $\|\widehat{\bm{v}}'\|_2 = \|\widehat{\bm{v}}\|_2$, $\|\widehat{\bm{v}}'\|_1 = \|\widehat{\bm{v}}\|_1$ and $|\supp(\widehat{\bm{v}}')| = |\supp(\widehat{\bm{v}})| \leq k$, so $\widehat{\bm{v}}'$ is feasible.
\begin{align*}
    \langle \widehat{\bm{v}}', \bm{v}_* \rangle  = & ~ \sum_{l \in S, l \ \neq j} \widehat{\bm{v}}_l' {\bm{v}_*}_l + \widehat{\bm{v}}_j' {\bm{v}_*}_j\\
         > & ~ \sum_{l \in S, l \ \neq j} \widehat{\bm{v}}_l' {\bm{v}_*}_l = \sum_{l \in S, l \ \neq j} \widehat{\bm{v}}_l {\bm{v}_*}_l + \widehat{\bm{v}}_j {\bm{v}_*}_j = \langle \widehat{\bm{v}}, \bm{v}_* \rangle \geq 0
\end{align*}
Inserting the above inequality to the objective function implies 
\begin{align*}
    & ~ \sqrt{\widehat{\bm{v}}'^{\top} \bm{\Sigma} \widehat{\bm{v}}'} - \frac{\rho}{\sqrt{n}} \|\widehat{\bm{v}}'\|_1 = \sqrt{1 + \lambda \langle \widehat{\bm{v}}', \bm{v}_* \rangle^2} - \frac{\rho}{\sqrt{n}} \|\widehat{\bm{v}}'\|_1\\
    > & ~ \sqrt{1 + \lambda \langle \widehat{\bm{v}}, \bm{v}_* \rangle^2} - \frac{\rho}{\sqrt{n}} \|\widehat{\bm{v}}\|_1 = \sqrt{\widehat{\bm{v}}^{\top} \bm{\Sigma} \widehat{\bm{v}}} - \frac{\rho}{\sqrt{n}} \|\widehat{\bm{v}}\|_1
\end{align*}
    which contradict with the optimality of $\widehat{\bm{v}}$.
\end{proof}
Based on Lemma~\ref{lemma:support-set}, we are poised to present the proof of Theorem~\ref{prop:existing-stat-result}. \\

\noindent \textbf{Recall the Theorem~\ref{thm:vector-recovery}:} Assume samples $\bm{X}$ are i.i.d. generated based on Assumption~\ref{assump:sample-model} and parameter $\rho$ satisfies 
\begin{align*}
    \rho \leq c \cdot \delta \lambda \sqrt{\frac{n}{k}}. 
\end{align*}
Then the optimal solution 
\begin{align*}
    \widehat{\bm{v}} := \argmax_{\bm{v} \in \mathcal{V}_k} ~ \sqrt{\bm{v}^{\top} \bm{\Sigma} \bm{v} } - \frac{\rho}{\sqrt{n}} \|\bm{v}\|_1 
\end{align*}
ensures $|\langle \widehat{\bm{v}}, \bm{v}_* \rangle | \geq 1 - \delta$, where $\bm{v}_*$ denotes the optimal solution of the sparse PCA problem, i.e., $\bm{v}_* := \argmax_{\bm{v} \in \mathcal{V}_k} \bm{v}^{\top} \bm{\Sigma} \bm{v}$. 

\begin{proof}
Using the result of Lemma~\ref{lemma:support-set}, it is sufficient to restrict the constraint from $\bm{v} \in \mathcal{V}_k$ to $\|\bm{v}\|_2 = 1, \supp(\bm{v}) = S_*$. Here, we compute the upper bound of $\rho$ under which the optimal solution of 
\begin{align*}
    \widehat{\bm{v}} := \argmax_{\bm{v} \in \mathcal{V}_s^d} ~ \sqrt{\bm{v}^{\top} \bm{\Sigma} \bm{v} } - \frac{\rho}{\sqrt{n}} \|\bm{v}\|_1
\end{align*}
ensures $|\langle \widehat{\bm{v}}, \bm{v}_* \rangle | \geq 1 - \delta$. Show by contradiction, suppose $|\langle \widehat{\bm{v}}, \bm{v}_* \rangle | \leq 1 - \delta$ holds, the optimality of $\widehat{\bm{v}}$ implies  
\begin{align*}
    \sqrt{1 + \lambda(1 - \delta)^2 } - \frac{\rho}{\sqrt{n}} \|\widehat{\bm{v}}\|_1 \geq \sqrt{\widehat{\bm{v}}^{\top} \bm{\Sigma} \widehat{\bm{v}} } - \frac{\rho}{\sqrt{n}} \|\widehat{\bm{v}}\|_1 \geq \sqrt{1 + \lambda} - \frac{\rho}{\sqrt{n}} \|\bm{v}_*\|_1
\end{align*}
which implies
\begin{align*}
     \rho \geq \frac{\lambda (2 \delta - \delta^2)}{\sqrt{1 + \lambda} + \sqrt{1 + \lambda (1 - \delta)^2}} \cdot \frac{\sqrt{n}}{\|\bm{v}_*\|_1 - \|\widehat{\bm{v}}\|_1} ~~. 
\end{align*}
Let $\widehat{\bm{e}} := \bm{v}_* - \widehat{\bm{v}}$ with number of nonzero components at most $k$ by Lemma~\ref{lemma:support-set}, then
\begin{align*}
    \|\bm{v}_*\|_1 - \|\widehat{\bm{v}}\|_1 \leq \|\widehat{\bm{e}}\|_1 \leq \sqrt{\|\widehat{\bm{e}}\|_0} \|\widehat{\bm{e}}\|_2 \leq \sqrt{k} \|\widehat{\bm{e}}\|_2 \leq \sqrt{k} \sqrt{2 - 2 \langle \widehat{\bm{v}}, \bm{v}_* \rangle} ~~ . 
\end{align*}
Thus, we have
\begin{align*}
    \rho \geq & ~ \frac{\lambda (2 \delta - \delta^2)}{\sqrt{1 + \lambda} + \sqrt{1 + \lambda (1 - \delta)^2}} \cdot \frac{\sqrt{n}}{\|\bm{v}_*\|_1 - \|\widehat{\bm{v}}\|_1} \\
    \geq & ~ \frac{\lambda (2 \delta - \delta^2)}{\sqrt{1 + \lambda} + \sqrt{1 + \lambda (1 - \delta)^2}} \cdot \frac{\sqrt{n}}{ \sqrt{k} \cdot \max\{1, \sqrt{2}\sqrt{1 - \langle \widehat{\bm{v}}, \bm{v}_* \rangle} \} } \\
    = & ~ \underbrace{\frac{2 \delta - \delta^2}{\sqrt{6} \cdot \max\{1, \sqrt{2}\sqrt{1 - \langle \widehat{\bm{v}}, \bm{v}_* \rangle} \} }}_{=: \epsilon' } \cdot \lambda \cdot \frac{\sqrt{n}}{ \sqrt{k} } 
    =: \rho^*  ~~. 
\end{align*}
That is to say, if $\rho < \rho^*$, $|\langle \widehat{\bm{v}}, \bm{v}_* \rangle | \geq 1 - \delta$ holds. Compared with the existing statistical result, we have
\begin{align*}
    O(\delta) = \epsilon' \geq \epsilon = O(\delta^6) \quad \text{and} \quad \lambda \geq \min\{\sqrt{\lambda}, \lambda\} ~~, 
\end{align*}
therefore, the proposed MIP reformulation is more robust since it ensures a greater upper bound on $\|\bm{E}\|_{1 \rightarrow 2}$. 
\end{proof}

\subsection{Proof of Theorem~\ref{thm:strong-weak-signal}} \label{app:strong-weak-signal}

\textbf{Recall the Theorem~\ref{thm:strong-weak-signal}:}
Under Assumption~\ref{assump:strong-weak}, the behavior of $\widehat{\bm{v}}$ satisfies the following:

\noindent \textbf{1.}  \textbf{Recovery stage.} When $\rho \in [0, ~ O(\min\{\sqrt{\lambda}, \lambda\} \sqrt{ n / k_2})]$, $\widehat{\bm{v}}$ recovers the ground truth $\bm{v}_*$ defined in Theorem~\ref{thm:vector-recovery}. 

\noindent \textbf{2.} \textbf{Robust stage.} When $\rho \in (O(\min\{\sqrt{\lambda}, \lambda\} \sqrt{n / k_2}), ~ O(\min\{\sqrt{\lambda}, \lambda\} \sqrt{ n / k_1})]$, $\widehat{\bm{v}}$ recovers the strong signal part in $\bm{v}_*$ while eliminating the weak signal. 

\noindent \textbf{3.} \textbf{Overly perturbed stage.} When $\rho \in (O(\min\{\sqrt{\lambda}, \lambda\} \sqrt{ n / k_1}), ~ + \infty)$, adversarial perturbation is too large to recover enough information of the ground truth. 

\begin{proof}
Our analysis focuses on the perturbation parameter of the robust stage. The proof sketch mainly contains three steps. Step-1 provides a simplified reformulation of the objective function for Formulation~\eqref{eq:pop-featurewise-spca} under Assumption~\ref{assump:strong-weak}. Step-2 then demonstrates that such optimality of this reformulation can be achieved at only four distinct points. Consequently, determining optimality reduces to comparing objective values among these four points. Based on this comparison, we derive conditions (lower and upper bounds) for $\rho$, ensuring that one of these points is guaranteed to be optimal. Finally, building upon Step-2, Step-3 establishes the connections between adversarial perturbation parameter $\rho$ and the behavior of $\widehat{\bm{v}}$ as presented in Theorem~\ref{thm:strong-weak-signal}. \\

\noindent \textbf{Step 1} Based on the above Lemma~\ref{lemma:support-set}, define the optimal solution $\widehat{\bm{v}} = (\widehat{\bm{v}}_{S_1}, \widehat{\bm{v}}_{S_2}, \bm{0}_{[d] \backslash S})$ with
\begin{align*}
    \left\{
    \begin{array}{llll}
        \|\widehat{\bm{v}}_{S_1}\|_2 = \cos \theta, ~~ \|\widehat{\bm{v}}_{S_2}\|_2 = \sin \theta ~~\text{for} ~~ \theta \in [0, \pi/2]  \\
        \langle \widehat{\bm{v}}_{S_1}, \bm{v}_* \rangle = \sqrt{c} \cos \theta \cos \theta_1, ~~ \langle \widehat{\bm{v}}_{S_2}, \bm{v}_* \rangle = \sqrt{1 - c} \sin \theta \cos \theta_2
    \end{array}
    \right. ~~,
\end{align*}
where $\theta_1, \theta_2 \in [0, \pi/2]$ denote the angles between vectors $\widehat{\bm{v}}_{S_1}, \bm{v}_*$ and vectors $\widehat{\bm{v}}_{S_2}, \bm{v}_*$, respectively. Note that the components of $\widehat{\bm{v}}$ are all non-negative, otherwise, we can flip the sign of these components and we will obtain greater inner product of $\widehat{\bm{v}}$ and $\bm{v}_*$. Since the value of $\bm{v_*}$ on $S_1$ (or $S_2$) are all the same, we can directly derive the $\ell_1$-norm of $\widehat{\bm{v}}_{S_1}$ (or $\widehat{\bm{v}}_{S_1}$) from the corresponding inner product. Therefore,  the $\ell_1$-norm of $\bm{\widehat{v}}$ can be explicitly computed as follows.
\begin{align*}
    \|\widehat{\bm{v}}\|_1 = & ~ \|\widehat{\bm{v}}_{S_1}\|_1 + \|\widehat{\bm{v}}_{S_2}\|_1 = \frac{\langle \widehat{\bm{v}}_{S_1}, \bm{v}_* \rangle}{\sqrt{\frac{c}{k_1}}} + \frac{\langle \widehat{\bm{v}}_{S_2}, \bm{v}_* \rangle}{\sqrt{\frac{1-c}{k_2}}}
    = \sqrt{k_1} \cos \theta \cos \theta_1 + \sqrt{k_2} \sin \theta \cos \theta_2 ~ . 
\end{align*}
Based the setting of covariance matrix $\bm{\Sigma}$ in Assumption~\ref{assump:strong-weak}, plugging the above expression into the objective function gives
\begin{align*}
    \sqrt{\widehat{\bm{v}}^{\top} \bm{\Sigma} \widehat{\bm{v}} } - \frac{\rho}{\sqrt{n}} \|\widehat{\bm{v}}\|_1 =  & ~ \sqrt{\widehat{\bm{v}}^{\top} (\bm{I_d} + \lambda \bm{v_*}^{\top} \bm{v_*}) \widehat{\bm{v}} } - \frac{\rho}{\sqrt{n}} \|\widehat{\bm{v}}\|_1 \\
    = & ~ \sqrt{1 + \lambda \langle \widehat{\bm{v}}, \bm{v}_* \rangle^2 } - \frac{\rho}{\sqrt{n}} \|\widehat{\bm{v}}\|_1 \\
    = & ~ \sqrt{1 + \lambda \left(\langle \widehat{\bm{v}}_{S_1}, \bm{v}_* \rangle + \lambda \langle \widehat{\bm{v}}_{S_2}, \bm{v}_* \rangle \right)^2 } - \frac{\rho}{\sqrt{n}} \|\widehat{\bm{v}}\|_1 \\
    = & ~ \sqrt{1 + \lambda \left(\sqrt{c} \cos \theta \cos \theta_1 +  \sqrt{1 - c} \sin \theta \cos \theta_2 \right)^2} \\
    & ~ - \frac{\rho}{\sqrt{n}} \left( \sqrt{k_1} \cos \theta \cos \theta_1 + \sqrt{k_2} \sin \theta \cos \theta_2 \right) ~. 
\end{align*}
To be concise, define $x_1 := \cos \theta_1 \in [0,1]$, $x_2 := \cos \theta_2 \in [0,1]$, we can further translate the objective function into a triple-variable function $f(\theta, x_1, x_2)$ defined as follows
\begin{align*}
    f \left(\theta,x_1,x_2 \right) := & ~  \sqrt{1 + \lambda \left(\sqrt{c} \cos \theta x_1 +  \sqrt{1 - c} \sin \theta x_2 \right)^2} \\
    & ~ - \frac{\rho}{\sqrt{n}} \left( \sqrt{k_1} \cos \theta x_1 + \sqrt{k_2} \sin \theta x_2 \right) ~. 
\end{align*}

\noindent\textbf{Step 2.1} For a fixed $\theta$, we then search for the maximum of $f(\theta, x_1, x_2)$ over $x_1$ and $x_2$. Taking partial derivative over $x_1$, we have 
\begin{align*}
    \frac{\partial f }{\partial x_1} \left(\theta,x_1,x_2 \right) =& \frac{\lambda \left(\sqrt{c} \cos \theta x_1 + \sqrt{1-c}\sin \theta \cos \theta_2 \right) \left(\sqrt{c} \cos \theta \right)}{\sqrt{1 + \lambda \left(\sqrt{c} \cos \theta x + \sqrt{1-c}\sin \theta \cos \theta_2 \right)^2}} - \frac{\rho}{\sqrt{n}} \sqrt{k_1} \cos \theta\\
    =&\frac{\lambda \sqrt{c} \cos \theta}{\sqrt{\frac{1}{\left( \sqrt{c} \cos \theta x_1 + \sqrt{1-c}\sin \theta \cos \theta_2 \right)^2} + \lambda}} - \frac{\rho}{\sqrt{n}} \sqrt{k_1} \cos \theta
\end{align*}
This derivative increases monotonically with respect to $x_1$, meaning that the objective function is convex with regard to $x_1$, so the optimal value is attained at the boundary (extreme points), i.e. $x_1 = 0$ or $x_1 = 1$. Similarly, by taking partial derivative over $x_2$, it is easy to conclude that the optimal value is attained when $x_2 = 0$ or $x_2 = 1$. 
Thus, determining optimality of function $f(\theta, x_1, x_2)$, given $\theta$, reduces to comparing objective values among four points $\left( x_1, x_2 \right) = \left( 0, 0 \right), \left( 1, 0 \right), \left( 0, 1 \right), \left( 1, 1 \right)$, which gives the following four objective function values:
\begin{enumerate}
    \item $f \left(\theta, 0,0 \right) = 1$; 
    \item $f \left(\theta, 1,0 \right) = \sqrt{1 + \lambda c \cos^2 \theta} - \frac{\rho}{\sqrt{n}} \sqrt{k_1} \cos \theta$; 
    \item $f \left(\theta, 0,1 \right) = \sqrt{1 + \lambda \left(1 - c \right) \sin^2 \theta} - \frac{\rho}{\sqrt{n}} \sqrt{k_2} \sin \theta$; 
    \item $f \left(\theta, 1,1 \right) = \sqrt{1 + \lambda \left(\sqrt{c} \cos \theta + \sqrt{1-c} \sin \theta \right)^2} - \frac{\rho}{\sqrt{n}} \left(\sqrt{k_1} \cos \theta + \sqrt{k_2} \sin \theta \right)$ 
\end{enumerate}
and the original objective can be represented as 
\begin{align*}
    &\max_{\theta \in [0, \pi/2]} \left\{ f \left(\theta, 0,0 \right), f \left(\theta, 1,0 \right), f \left(\theta, 0,1 \right), f \left(\theta, 1,1 \right)\right\}\\
    = &\max \left\{ \max_{\theta \in [0, \pi/2]} f \left(\theta, 0,0 \right), \max_{\theta \in [0, \pi/2]} f \left(\theta, 1,0 \right), \max_{\theta \in [0, \pi/2]} f \left(\theta, 0,1 \right), \max_{\theta \in [0, \pi/2]} f \left(\theta, 1,1 \right)\right\}.  
\end{align*}
It remains to compare these four maximum values and pick out the largest one. Since our goal is to find the perturbation range of the robust stage, we would like to figure out under which condition (parameter regime) $f \left(0, 1,0 \right)$ takes the maximum. This is because when $x_1 = 1, x_2 = 0$, we have $\langle \widehat{\bm{v}}_{S_1}, \bm{v}_* \rangle = \sqrt{c}, \langle \widehat{\bm{v}}_{S_2}, \bm{v}_* \rangle = 0$, which represents the stage that the strong signal is preserved and the weak signal is eliminated. 

Since it is hard to find a closed-form representation of $\theta$ under which $f \left(\theta, 1,1 \right)$ achieves its maximum, so we first compare $f \left(\theta, 1,0 \right)$ and $f \left(\theta, 1,1 \right)$ for every $\theta$ to show that under some condition, $\max_{\theta \in [0, \pi/2]} f \left(\theta, 1,1 \right)$ cannot be the largest. For any $\theta \in [0, \pi/2]$, we have
\begin{align*}
    & ~ \quad f \left(\theta, 1,0 \right) \geq f \left(\theta, 1,1 \right) \\
    \Leftrightarrow ~&~ \quad \rho \geq \frac{\sqrt{1 + \lambda (\sqrt{c} \cos \theta + \sqrt{1-c} \sin \theta)^2} - \sqrt{1 + \lambda c \cos^2 \theta}}{\sin \theta} \frac{\sqrt{n}}{\sqrt{k_2}} ~. 
\end{align*}
Note that the above right-hand-side term can be upper-bounded by 
\begin{align*}
    & ~ \frac{\sqrt{1 + \lambda \left( \sqrt{c} \cos \theta + \sqrt{1-c} \sin \theta \right)^2} - \sqrt{1 + \lambda c \cos^2 \theta}}{\sin \theta} \frac{\sqrt{n}}{\sqrt{k_2}}\\
    = & ~ \frac{\lambda \left(2 \sqrt{c \left(1-c \right)} \sin \theta \cos \theta + \left( 1-c \right) \sin^2 \theta \right)}{\sin \theta \left(\sqrt{1 + \lambda (\sqrt{c} \cos \theta + \sqrt{1-c} \sin \theta)^2} + \sqrt{1 + \lambda c \cos^2 \theta} \right)} \frac{\sqrt{n}}{\sqrt{k_2}}\\
    = & ~ \frac{\lambda \left(2 \sqrt{c \left(1-c \right)} \cos \theta + \left(1-c \right) \sin \theta \right)}{\sqrt{1 + \lambda \left(\sqrt{c} \cos \theta + \sqrt{1-c} \sin \theta \right)^2} + \sqrt{1 + \lambda c \cos^2 \theta}} \frac{\sqrt{n}}{\sqrt{k_2}}\\
    \leq & ~ \frac{\sqrt{-3 c^2 + 2c + 1}}{\sqrt{1 + \lambda \left(1-c \right)} + 1} \frac{\lambda \sqrt{n}}{\sqrt{k_2}} = O \left(\frac{\lambda \sqrt{n}}{\sqrt{k_2}} \right) ~ . 
\end{align*}
Note that the last inequality holds independently on the choice of $\theta$, thus, we get a lower bound of order $O \left(\frac{\lambda \sqrt{n}}{\sqrt{k_2}} \right)$ for $\rho$ that guarantees $f \left(\theta, 1,0 \right) \geq f \left(\theta, 1,1 \right)$ regardless of $\theta$. In this case, the original objective is reduced to
\begin{align*}
    \max \left\{ \max_{\theta \in [0, \pi/2]} f \left(\theta, 0,0 \right), \max_{\theta \in [0, \pi/2]} f \left(\theta, 1,0 \right), \max_{\theta \in [0, \pi/2]} f \left(\theta, 0,1 \right) \right\}.  
\end{align*}

\noindent\textbf{Step 2.2} We then compute the maximum of the above three functions, respectively. The trivial case is $\max_{\theta \in [0, \pi/2]} f \left(\theta, 0,0 \right) = 1$. 

For $\max_{\theta \in [0, \pi/2]} f \left(\theta, 1,0 \right)$, our goal is to find out the range of $\rho$ such that $f(0,1,0)$ is optimal. By taking the partial derivative,
\begin{align*}
    \frac{\partial f }{\partial \theta} \left(\theta,1,0 \right) = & ~ - \frac{\lambda c \cos \theta \sin \theta}{\sqrt{1 + \lambda c \cos^2 \theta}} + \frac{\rho \sqrt{k_1}}{\sqrt{n}} \sin \theta \\
    = & ~ \sin \theta \left(\frac{\rho \sqrt{k_1}}{\sqrt{n}} - \frac{\lambda c \cos \theta}{\sqrt{1 + \lambda c \cos^2 \theta}} \right)\\
    = & ~ \sin \theta \left(\frac{\rho \sqrt{k_1}}{\sqrt{n}} - \frac{\lambda c}{\sqrt{\frac{1}{\cos^2 \theta} + \lambda c }} \right)
\end{align*}
When $\rho > \frac{\lambda c}{\sqrt{1 + \lambda c}} \frac{\sqrt{n}}{\sqrt{k_1}}$, the partial derivative is always non-negative, indicating that $f(\frac{\pi}{2},1,0) = 1$ is optimal, which is not the case we are interested in. When $\rho \leq \frac{\lambda c}{\sqrt{1 + \lambda c}} \frac{\sqrt{n}}{\sqrt{k_1}}$, the partial derivative is first negative then positive, meaning that the optimal point can only be achieved at $\theta = 0$ or $\theta = \frac{\pi}{2}$. If $\theta = 0$, $f(0,1,0) = \sqrt{1 + \lambda c} - \frac{\rho \sqrt{k_1}}{\sqrt{n}}$. If $\theta = \frac{\pi}{2}$, $f(\frac{\pi}{2},1,0) = 1$ holds trivially. Thus,
\begin{align*}
    f(0,1,0) \geq f(\frac{\pi}{2},1,0) = 1 \quad 
    \Leftrightarrow \quad \rho \leq \frac{\lambda c}{1 + \sqrt{1 + \lambda c}} \frac{\sqrt{n}}{\sqrt{k_1}}
\end{align*}
This inequality also directly imply that $f(0,1,0) \geq \max_{\theta \in [0, \frac{\pi}{2}]} f(\theta,0,0)$. Putting the above two upper bounds together gives the parameter regime of $\rho$ as $\rho \leq \frac{\lambda c}{1 + \sqrt{1 + \lambda c}} \frac{\sqrt{n}}{\sqrt{k_1}}$.\\

\noindent We then discuss $\max_{\theta \in [0, \pi/2]} f \left(\theta, 0,1 \right)$ under this parameter regime. By taking the partial derivative,
\begin{align*}
    \frac{\partial f }{\partial \theta} \left(\theta,0,1 \right) = & ~  \frac{\lambda (1-c) \cos \theta \sin \theta}{\sqrt{1 + \lambda (1-c) \sin^2 \theta}} - \frac{\rho \sqrt{k_2}}{\sqrt{n}} \cos \theta \\
    = & ~ \cos \theta \left( \frac{\lambda (1-c)}{\sqrt{\frac{1}{\sin^2 \theta} + \lambda (1-c)}} - \frac{\rho \sqrt{k_2}}{\sqrt{n}} \right)
\end{align*}
When $\rho > \frac{\lambda (1-c)}{\sqrt{1 + \lambda (1-c)}} \frac{\sqrt{n}}{\sqrt{k_2}}$, the partial derivative is always non-positive, indicating that $f(0,0,1) = 1$ is optimal. When $\rho \leq \frac{\lambda (1-c)}{\sqrt{1 + \lambda (1-c)}} \frac{\sqrt{n}}{\sqrt{k_2}}$, the derivative is first negative then positive, meaning that its optimal attains at the boundary, i.e. $\theta = 0$ or $\theta = \frac{\pi}{2}$.  If $\theta = 0$, we get a trivial result for $f(0,0,1) = 1$. If $\theta = \frac{\pi}{2}$, we have $f(\frac{\pi}{2},0,1) = \sqrt{1 + \lambda (1-c)} - \frac{\rho}{\sqrt{n}} \sqrt{k_2} $. Until now, we have shown that $\max_{\theta \in [0, \frac{\pi}{2}]} f(\theta, 0,1) = \max \left( 1, \sqrt{1 + \lambda (1-c)} - \frac{\rho}{\sqrt{n}} \sqrt{k_2} \right)$. Next, we need to compare $f(0,1,0)$ with $1$ and $\sqrt{1 + \lambda (1-c)} - \frac{\rho}{\sqrt{n}} \sqrt{k_2}$ respectively. We already have $\rho \leq \frac{\lambda c}{1 + \sqrt{1 + \lambda c}} \frac{\sqrt{n}}{\sqrt{k_1}}$, which directly imply that $f(0,1,0) \geq 1$. Moreover, since the conditions presented in Theorem~\ref{thm:strong-weak-signal} ensure $c > \frac{1}{2}$ and $k_1 < k_2$, thus
\begin{align*}
    & ~ f(0,1,0) = \sqrt{1 + \lambda c} - \frac{\rho}{\sqrt{n}} \sqrt{k_1} > \sqrt{1 + \lambda (1-c)} - \frac{\rho}{\sqrt{n}} \sqrt{k_2} = f(\frac{\pi}{2},0,1).
\end{align*}
Therefore, $\max_{\theta \in [0, \frac{\pi}{2}]} f(\theta, 1,0)\geq \max_{\theta \in [0, \frac{\pi}{2}]} f(\theta, 0,1)$\\

\noindent \textbf{Step 3} Combine the above results together, we conclude that 
\begin{enumerate}
    \item \textbf{Recovery stage.} When $\rho \in [0, ~ O(\min\{\sqrt{\lambda}, \lambda\} \sqrt{ n / k_2})]$, the problem reduces to the scenario of Theorem~\ref{thm:vector-recovery} with $f(\theta, 1, 1)$ takes the optimality. Thus, $\widehat{\bm{v}}$ recovers the ground truth $\bm{v}_*$ as described in Theorem~\ref{thm:vector-recovery}. 
    \item \textbf{Robust stage.} When $\rho \in (O(\min\{\sqrt{\lambda}, \lambda\} \sqrt{n / k_2}), ~ O(\min\{\sqrt{\lambda}, \lambda\} \sqrt{ n / k_1})]$, $f(\theta, 1, 0)$ takes the optimality, and $\widehat{\bm{v}}$ recovers the strong signal part in $\bm{v}_*$ while eliminating the weak signal. 
    \item \textbf{Overly perturbed stage.} When $\rho \in (O(\min\{\sqrt{\lambda}, \lambda\} \sqrt{ n / k_1}), ~ + \infty)$, adversarial perturbation is too large to recover enough information of the ground truth.  
\end{enumerate}
% $f \left(\theta, 1,0 \right)$ is optimal. Furthermore, we have $|\langle \widehat{\bm{v}}_{S_1}, \bm{v}_* \rangle| = \sqrt{c}$

% To sum up, when $O(\frac{\lambda \sqrt{n}}{\sqrt{k_2}}) \leq \rho \leq O(\frac{\lambda \sqrt{n}}{\sqrt{k_1}})$, the optimal solution will recover the strong signal while eliminating the weak signal, which is the range for robust stage.
\end{proof}

\section{Numerical simulations in Section~\ref{sec:stat-results}} \label{app:numerical}

\subsection{Baselines, hardware \& software information} \label{app:baselines}

\subsubsection{Implemented MIP methods for upper (dual) bound.}

Our implemented MIP method operates in two stages to improve computational efficiency. In the first stage, given an uncorrupted sample matrix $\bm{X} \in \mathbb{R}^{n \times d}$, we reduce the original dimension $d$ of the covariance matrix $\widehat{\bm{\Sigma}}$ to a smaller dimension $\bar{d} (\geq k)$. This reduction is achieved by selecting a principal submatrix indexed by a support set $\bar{S}$ of size $\bar{d}$ such that $\bar{S}$ includes the true support $S_*$ with high probability (via projected power method). For further details, see Algorithm~\ref{alg:submatrix} for the first stage in Appendix~\ref{app:baselines}). 

\begin{algorithm}[h!]
    \caption{Submatrix Initialization (first stage)}
    \label{alg:submatrix}
    \textbf{Input:} Sample Covariance Matrix $\bm{\widehat{\Sigma}}$, initial $k$-sparse vector $\bm{v}^{(0)}$, effective dimension $\bar{d}$, total iteration time $T$
    \begin{algorithmic}[1]
    \State Set $\bm{v} = \bm{v}^{\tt target} = \bm{v}^{(0)}$
    \State Compute sparse PCA objective value $\obj = \bm{v^{\top}}\bm{\widehat{\Sigma}} \bm{v}$
    \For{$t = 1, \cdots, T$}
        \State Update $\bm{v} = \bm{\widehat{\Sigma}} \bm{v} / \|\bm{\widehat{\Sigma}} \bm{v}\|_2$
        \State Pick index set $\mathcal{I} \subseteq [d]$ wrt top $k$ largest absolute components in $\bm{v}$
        \State Compute $\lambda_{\max} (\widehat{\bm{\Sigma}}_{\mathcal{I}, \mathcal{I}})$ and corresponding eigenvector $\bm{v}_{\mathcal{I}}$
        \State Update $\bm{v}$ by setting $\bm{v}_i = [\bm{v}_{\mathcal{I}}]_i$ for $i \in \mathcal{I}$, and zero otherwise        
        \If{$\lambda_{\max} (\widehat{\bm{\Sigma}}_{\mathcal{I}, \mathcal{I}}) > \obj$}
            \State Update $\obj = \lambda_{\max} (\widehat{\bm{\Sigma}}_{\mathcal{I}, \mathcal{I}})$ and $\bm{v}^{\tt target} = \bm{v}$
        \EndIf
    \EndFor
        \State Set the support of $\bm{v}^{\tt target}$ as $S^{\tt{target}}$
        \State Add another $\bar{d} - k$ indices with the largest diagonal elements to the current support $S^{\tt{target}}$ and obtain the effective support $\bar{S}$
        \State Obtain submatrix $\bm{\widehat{\Sigma}^{\tt{sub}}} = \bm{\widehat{\Sigma}}_{\bar{S},\bar{S}}$
    \end{algorithmic}
    \textbf{Output:} $\bm{\widehat{\Sigma}^{\tt{sub}}}$. 
    \end{algorithm}

In particular, given an initial point $\bm{v}_0$, Algorithm~\ref{alg:submatrix} first selects a support set $S^{\tt{target}}$ of size $k$ by computing the sparse principal component of $\widehat{\bm{\Sigma}}$ via the projected power method. Under the Assumption~\ref{assump:sample-model}, such support set $S^{\tt{target}}$ coincide with the true support set $S_*$ with high probability. The rest $\bar{d} - k$ indices are chosen by picking the largest $\bar{d} - k$ diagonal elements' indices within the rest index set, i.e.,  $[d] \backslash S^{\tt{target}}$, which shares the similar insights as diagonal thresholding method for sparse PCA problem. 
Then, merging these $\bar{d} - k$ indices into the first selected support set $S$ results in our effective support set $\bar{S}$ of size $\bar{d}$. 

Based on the first stage, the second stage of our proposed MIP implements proposed MIP formulation (i.e., \eqref{eq:featurewise-MIP} and \eqref{eq:featurewise-MIP-r}) on restricted  covariance matrix $\bm{\widehat{\Sigma}^{\tt{sub}}}$, i.e., the submatrix of $\bm{\widehat{\Sigma}}$ obtained by picking rows and columns indexed by $\bar{S}$. 
    
To be clear, we use $\bm{v}^{\MIP / \MIPr}, \opt^{\MIP / \MIPr}, \ub^{\MIP / \MIPr}$ to denote the optimal solution (or current best feasible solution), optimal value (or current best objective value), and corresponding upper (dual) bound obtained from solving Fomulation~\eqref{eq:featurewise-MIP} or Fomulation~\eqref{eq:featurewise-MIP-r}, respectively.

\subsubsection{Dual Baseline: Sparse PCA.}
Easy to observe that the vanilla sparse PCA problem 
\begin{align*}
    \max_{\bm{v} \in \mathcal{V}_k} ~ \bm{v}^{\top} \widehat{\bm{\Sigma}} \bm{v}
\end{align*}
could always be an upper bound for the optimal value of featurewise adversarial perturbed sparse PCA. We use $\bm{v}^{\spca}, \opt^{\spca}, \ub^{\spca}$ to denote its corresponding optimal solution (or current best feasible solution), optimal value, and upper (dual) bound. 

\subsubsection{Primal Baseline: Projected Power Method.} We take the vanilla projected power method (\PPM) as our primal baseline, as presented in Algorithm~\ref{alg:PPM}. 
\begin{algorithm}[h]
    \caption{\PPM} \label{alg:PPM} 
    \textbf{Input:} Initial point $\bm{v}^{(1)}$, iteration bound $T$, objective function $\mathcal{F}$ of Formulation~\eqref{eq:emp-adv-spca-2}
    \begin{algorithmic}[1]
    \For{$t = 1, \ldots, T$ and early stop criteria does not meet}
    \State Compute  $\nabla_{\bm{v}} \mathcal{F} (\bm{v}^{(t)})$
    \State Project to feasible set $\bm{v}^{(t + 1)} = \proj_{\mathcal{V}_k} (\nabla_{\bm{v}} \mathcal{F} (\bm{v}^{(t)})) $
    \EndFor
    \end{algorithmic}
    \textbf{Output:} $\bm{v}^{(T)}$. 
\end{algorithm} 
Additionally, in order to accelerate computation, we check the difference of vectors from consecutive iterations during each iteration. If their $\ell_2$ norm is smaller than the pre-determined threshold $\epsilon = 10^{-6}$, we early stop the iteration. Still, we use $\bm{v}^{\PPM} \in \mathcal{V}_k, \obj^{\PPM}$ to denote the primal feasible solution, the objective value obtained by \PPM.

\subsubsection{Hardware \& Software.} All experiments are conducted in MacBook Air with an Apple M2 Silicon (3.49 GHz, 2.42 GHz) and 16GB Memory (128-bit LPDDR5 3200 MHz). The proposed method and other methods are solved using Gurobi 11.0.0 in Python 3.11.5.

\subsection{Additional numerical simulations} \label{app:additional-numerical}

This subsection reports additional numerical simulation results (see Figure~\ref{figure:syn-data-1} below) as mentioned in Section~\ref{sec:numerical}. 

\begin{figure}[!h]
\centering
\begin{subfigure}
\centering
\includegraphics[width=0.31\linewidth]{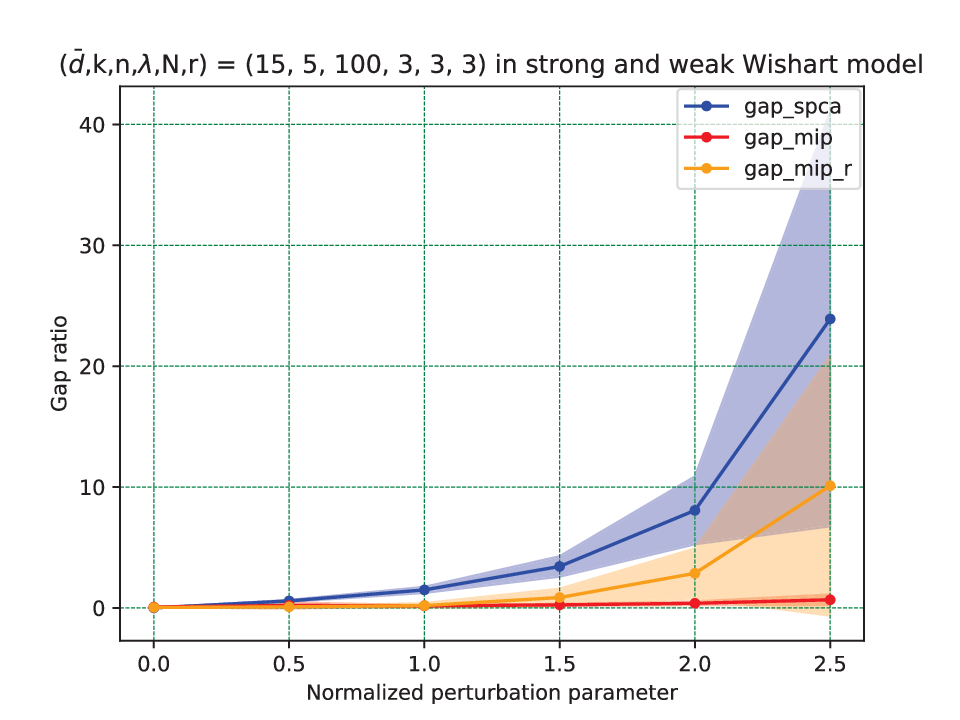}
\end{subfigure}
\hfill
\begin{subfigure}
\centering
\includegraphics[width=0.31\linewidth]
{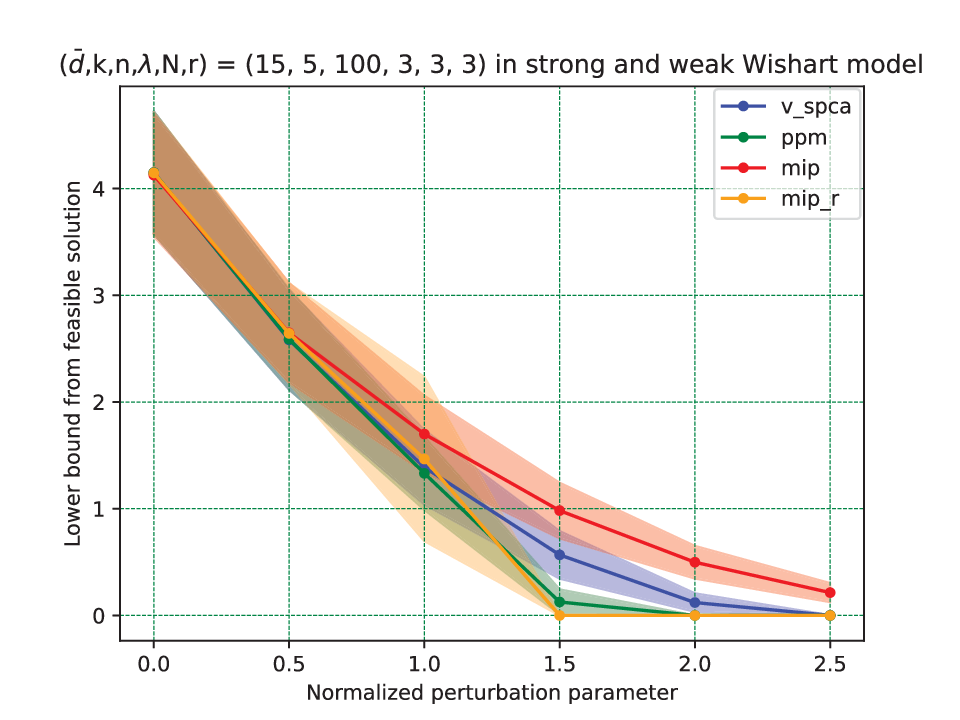}
\end{subfigure}
\hfill
\begin{subfigure}
\centering
\includegraphics[width=0.31\linewidth]
{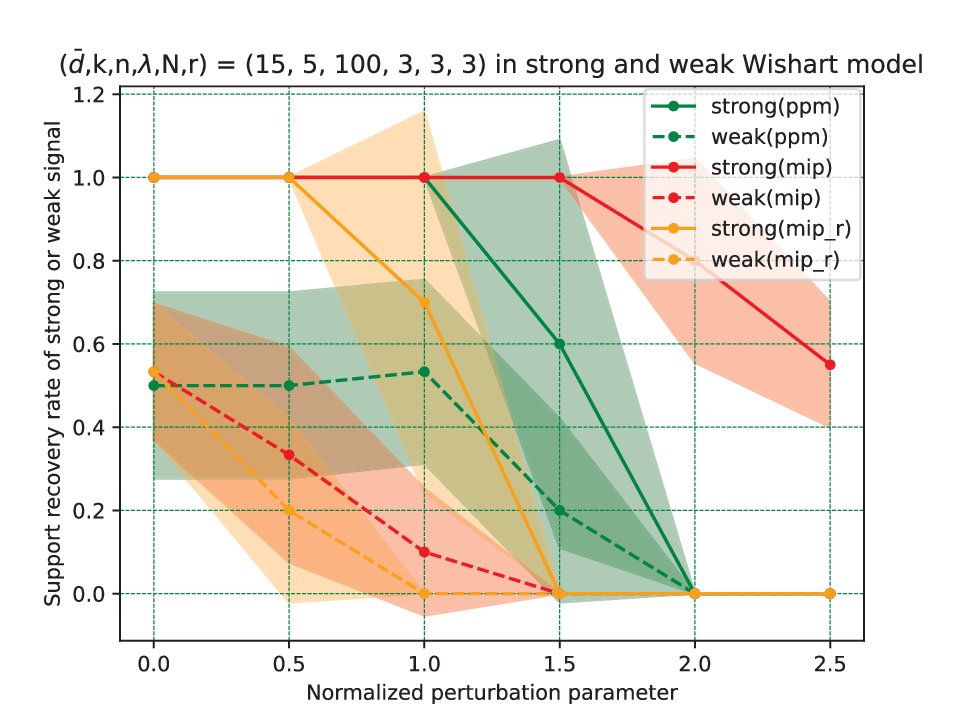}
\end{subfigure}

\centering
\begin{subfigure}
\centering
\includegraphics[width=0.31\linewidth]{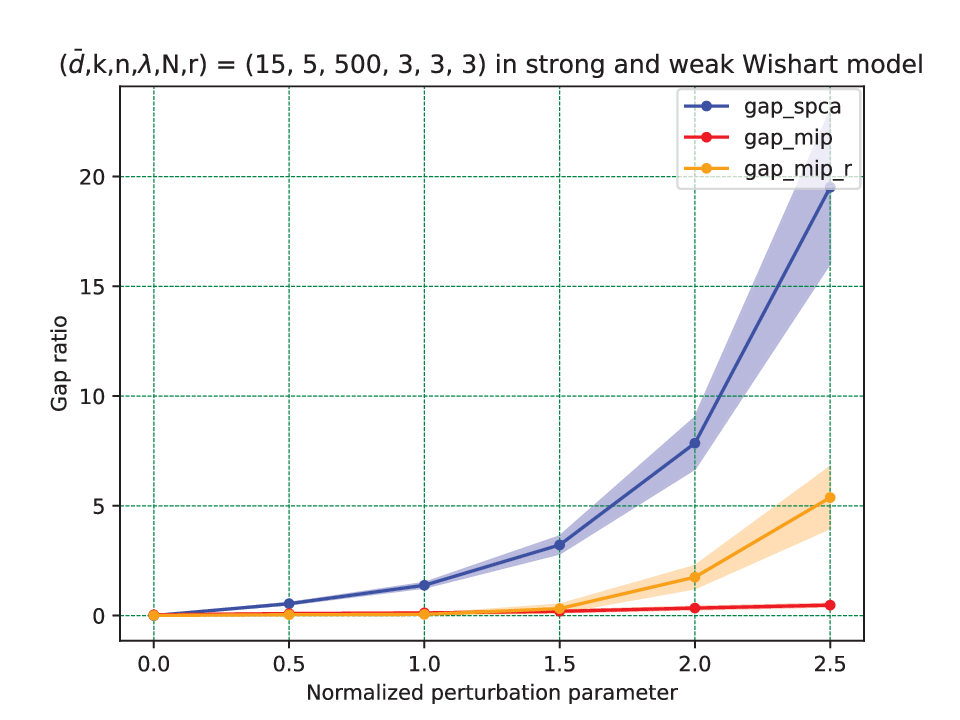}
\end{subfigure}
\hfill
\begin{subfigure}
\centering
\includegraphics[width=0.31\linewidth]
{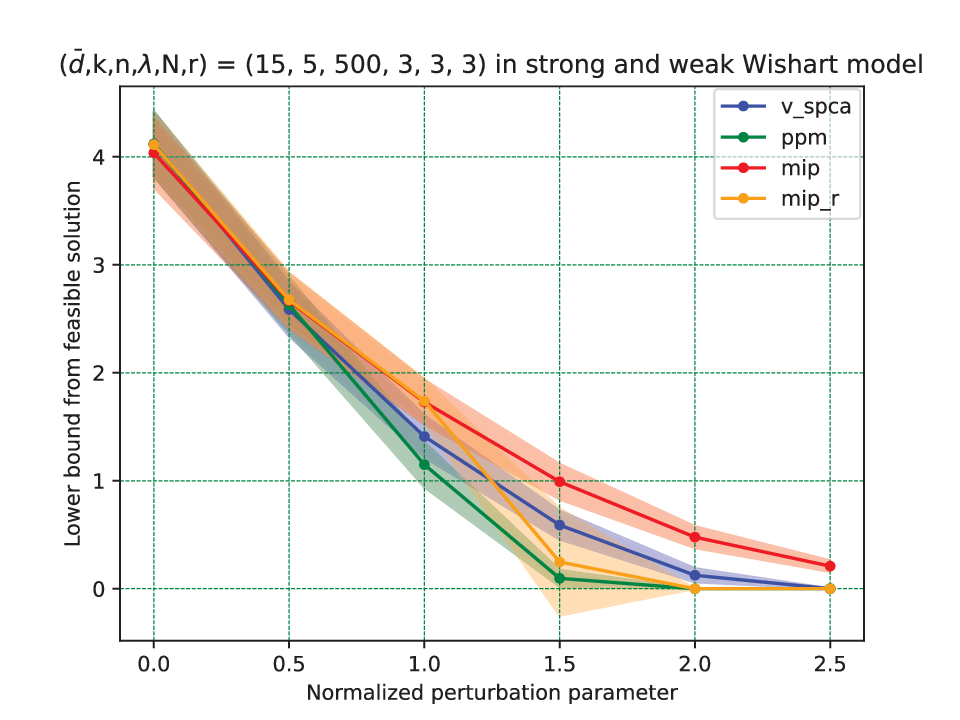}
\end{subfigure}
\hfill
\begin{subfigure}
\centering
\includegraphics[width=0.31\linewidth]
{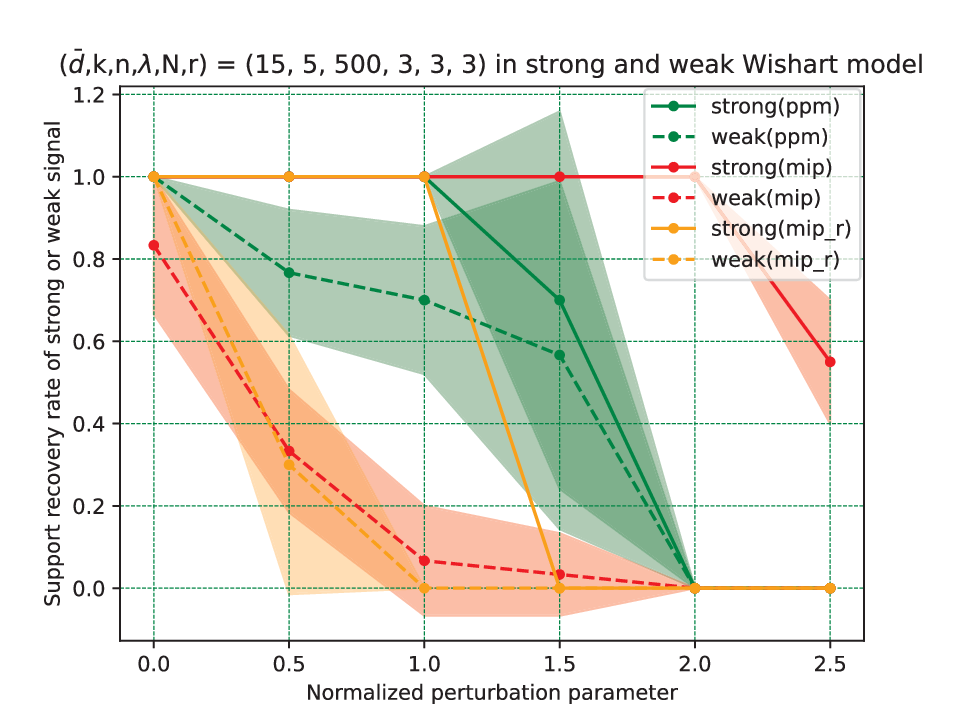}
\end{subfigure}

\caption{\textbf{Numerical simulations on strong and weak signals.} 
This figure compares the gap ratio, objective function value, and support recovery rate v.s. the normalized perturbation parameter $\bar{\rho}$ of \PPM \ and two proposed MIP formulations under two different parameter settings as mentioned in Section~\ref{sec:numerical}. The first and second rows consist of panels for $n = 100$ and $n = 500$, respectively. \\\\
In the first column, we plot the gap ratio (\gap) v.s. the normalized perturbation parameter $\bar{\rho}$. The
three solid curves in each panel correspond to the averaged values over 10 independent trials of corresponding methods; the shaded parts represent the empirical standard deviations over 10 independent trials. Smaller \gap \ means that the upper bound is tighter. It is easy to observe that the proposed \MIP \ method outperforms the other two methods, which validates our theoretical analysis. The \gap \ of the proposed \MIP \ method is close to 0, indicating that the MIP formulation is a very tight upper bound for the problem. \\\\
The second column plots the objective function values (\obj) of \eqref{eq:emp-adv-spca-2} v.s. the normalized perturbation parameter $\bar{\rho}$. Similarly, the four solid curves in each panel correspond to the averaged values over 10 independent trials of corresponding methods; and their shaded parts represent the empirical standard deviations over 10 independent trials. As we can observe, the proposed MIP method performs best when the perturbation is large, indicating that it is a more resilient method for finding the optimal solution.\\\\
The third column plots the support recovery rate v.s. the normalized perturbation parameter $\bar{\rho}$. In each panel, the three solid curves represent the averaged \emph{strong signal} recovery rate over 10 independent trials for each method, while the three dashed curves depict the averaged \emph{weak signal} recovery rate over 10 independent trials for each method. The shaded areas indicate empirical standard deviations across these trials. Our findings show that both the strong and weak signal recovery rates of \PPM \ consistently outperform the two proposed methods, suggesting that \PPM \ could be an effective approach for recovering the ground truth in this problem setting. A likely explanation for \PPM's strong performance lies in its strategy of retaining the $k$ largest indices of the gradient at each iteration. Given that the gradient of \eqref{eq:emp-adv-spca-2} is $2 \left(\|\bm{X} \bm{v}\|_2 - \rho \|\bm{v}\|_1 \right) \left(\frac{\bm{X}^{\top} \bm{X} \bm{v}}{\|\bm{X} \bm{v}\|_2} - \rho \tt{sgn} \left(\bm{v} \right) \right)$, the term $\rho \tt{sgn} \left(\bm{v} \right)$ becomes dominant when $\rho$ is large. Consequently, the support at each iteration tends to align with that of the previous iteration, often leading to a final computed support equal to the initial support, i.e., the support of $\bm{v}^{\spca}$, which corresponds to the true support with high probability. This observation explains why \PPM \ can retain the true support even under high perturbations.
}
\label{figure:syn-data-1}
\end{figure}

\end{document}